\numberwithin{equation}{section}
\numberwithin{algorithm}{section}
\theoremstyle{plain}
\newtheorem{theorem}{Theorem}[section]
\newtheorem{proposition}[theorem]{Proposition}
\newtheorem{lemma}[theorem]{Lemma}
\newtheorem{corollary}[theorem]{Corollary}
\theoremstyle{definition}
\newtheorem{definition}[theorem]{Definition}
\theoremstyle{remark}
\let\H\undefined
\DeclareMathOperator{\tr}{Tr}
\DeclareMathOperator{\diag}{diag}
\DeclareMathOperator{\H}{H}
\DeclareMathOperator{\adj}{adj\,}
\newcommand{\N}{\mathbb{N}}
\newcommand{\C}{\mathbb{C}}
\newcommand{\R}{\mathbb{R}}
\newcommand{\x}{\mathbf{x}}
\newcommand{\X}{\mathbf{X}}
\newcommand{\y}{\mathbf{y}}
\newcommand{\z}{\mathbf{z}}
\newcommand{\cC}{\mathcal{C}}
\newcommand{\cH}{\mathcal{H}}
\newcommand{\cU}{\mathcal{U}}
\newcommand{\cV}{\mathcal{V}}
\newcommand{\cZ}{\mathcal{Z}}
\newcommand{\ba}{\mathbf{a}}
\newcommand{\bb}{\mathbf{b}}
\newcommand{\bc}{\mathbf{c}}
\newcommand{\bd}{\mathbf{d}}
\newcommand{\bi}{\mathbf{i}}
\newcommand{\bn}{\mathbf{n}}
\newcommand{\bp}{\mathbf{p}}
\newcommand{\bs}{\mathbf{s}}
\newcommand{\bt}{\mathbf{t}}
\newcommand{\bu}{\mathbf{u}}
\newcommand{\bv}{\mathbf{v}}
\newcommand{\bw}{\mathbf{w}}
\newcommand{\bx}{\mathbf{x}}
\newcommand{\rB}{\mathrm{B}}
\newcommand{\rC}{\mathrm{C}}
\newcommand{\rD}{\mathrm{D}}
\newcommand{\rE}{\mathrm{E}}
\newcommand{\rH}{\mathrm{H}}
\newcommand{\rL}{\mathrm{L}}
\newcommand{\rS}{\mathrm{S}}
\newcommand{\rU}{\mathrm{U}}
\newcommand{\rV}{\mathrm{V}}
\newcommand{\0}{\mathbf{0}}
\newcommand{\1}{\mathbf{1}}
\begin{document}
\title[Barrier relaxations of optimal transport problems]
{Barrier relaxations of the classical and quantum optimal transport problems}
\author{Shmuel Friedland}
\address{
 Department of Mathematics, Statistics and Computer Science,
 University of Illinois at Chicago, Chicago, Illinois 60607-7045,
 USA, \texttt{friedlan@uic.edu}
 }
 \date{November 10, 2025}
\subjclass[2010]
  	{}
  	
\keywords{}   	

\begin{abstract}  In the last fifteen years a significant progress was achieved by considering an entropic relaxation of the classical multi-partite optimal transport problem  (MPOTP).  The entropic relaxation gives rise to the rescaling problem of a given tensor.   This rescaling can be achieved fast with  the Sinkhorn type algorithms.  Recently, it was shown that a similar approach works for the quantum MPOTP.  However, the analog of the rescaling Sinkhorn algorithm is much more complicated than in the classical MPOTP. 
In this paper we show that the interior point method (IPM) for the primary and dual problems of classical and quantum MPOTP problems can be considered as barrier relaxations of the optimal transport problems (OTP).   It is well known that the dual of the OTP are advantageous as it has much less variables than the primary problem.  The IPM for the dual problem of the classical MPOTP are not as fast as the Sinkhorn type algorithm.   However,  IPM method for the dual of the quantum MPOTP seems to work quite efficiently. 
\end{abstract}

\keywords{Classical optimal transport,  quantum optimal transport,  entropic relaxation,  barrier relaxaton, interior point method}

\subjclass[2010]{15A69, 52A41,  65K05, 0C22,90C25}
\maketitle
\tableofcontents
\section{Introduction}\label{sec:intro}
\subsection{Discrete classical optimal transport problem}\label{subsec:cdotp}
The  discrete classical bi-partite optimal transport problem (BPOTP) was introduced by Hitchcock and Kantorovich  \cite{Hit41,Kan42}.   It is the  following linear programming problem (LP).   Let $\Delta^n\subset\R^n$ the simplex of column probablity vectors, and $[n]=\{1,\ldots,n\}$.  For $\x=(x_1,\ldots,x_n)^\top\in\Delta^n$ denote by $E_S(\x)=-\sum_{i=1}^n x_i\log x_i$ the Shannon entropy of $\x$.
Assume that $\bp_i\in\Delta^{n_i}$ for $i=1,2$.  
Let $P=(\bp_1,\bp_2)$ and  $\rV(P)$ be the convex set of all (coupling) probability matrices with marginals $\bp_1,\bp_2$:
\begin{equation}\label{defUPmat}
\rV(P)=\{V=[v_{j_1,j_2}]\in\R_+^{n_1\times n_2}, \sum_{j_2=1}^{n_2} v_{j_1,j_2}=p_{j_1,1},  \sum_{j_1=1}^{n_1} v_{j_1,j_2}=p_{j_2,2}\}.
\end{equation}
Here $\R_+^n\supset \R_{++}^n$ stands for the cone of nonnegative vectors and its interior of positive vectors in $\R^n$.
Let $C=[c_{j_1,j_2}]\in \R^{n_1\times n_2}$ be the cost matrix of transporting a unit $j_1\in[n_1]$ to $j_2\in[n_2]$.  The BPOTP is
\begin{equation}\label{TOTmat}
\tau(C,P)=\min_{V\in\rV(P)}\tr C^\top V. 
\end{equation}
This problem can be stated  in terms of maximum flow \cite{PW09}.  Hence,  for $n_1=n_2=n$ the complexity of finding $\tau(C,P)$ is $O(n^3 \log n)$.  It turns out that  $\tau(C,P)$ has many recent applications in machine learning
\cite{AWR17,ACB17,LG15,MJ15,SL11}, statistics  \cite{BGKL17,PZ16,SR04} and computer vision \cite{BPPH11,RTG00}.   For $n>10^3$ the cost of computing $\tau(C,P)$ becomes prohibitively high if one uses the maximal flow algorithm.  

In a breakthrough paper Cuturi \cite{Cut13} suggested an entropic relaxation of the BPOTP:
\begin{equation}\label{EMOT}
\begin{aligned}
\tau(C,P,\varepsilon)= \min_{V\in \rV(P)} \tr C^\top V  -\varepsilon\rE_S(V).
\end{aligned}
\end{equation}
Recall that 
\begin{equation}\label{bctauin}
\tau(C,P,\varepsilon)\le \tau(C,P)\le \tau(C,P,\varepsilon)+\varepsilon\log(n_1n_2).
\end{equation}
(See part (a) of Lemma  \ref{minuecbpotplem}.)
As $-\rE_S$ is strictly convex, the minimal solution to \eqref{EMOT} is unique.   For a matrix $A=[a_{j_1,j_2}]\in\R^{n_1\times n_2}$ denote by $\exp(\circ A)\in \R_{++}^{n_1\times n_2}$ the entry-wise exponential matrix $[\exp(a_{j_1,j_2})]$.  Then the minimal solution to the entropic relaxation of OTP is given by 
\begin{equation}\label{minsolebtot}
\begin{aligned}
&U=\exp\left(\circ(\hat U_1+\hat U_2-\frac{1}{\varepsilon} C)\right)\in \rV(P),\\
&\hat U_1=\bu_1\1_{n_2}^\top,  \hat U_2=\1_{n_1}\bu_2^\top, \bu_1\in\R^{n_1}, \bu_2\in\R^{n_2}, \1_{n_1}^\top\bu_1-\1_{n_2}^\top\bu_2=0,\\
&\1_n=(1,\ldots,1)^\top \in\R^n. 
\end{aligned}
\end{equation}
That is, the minimal solution is a rescaling of $\exp\left(\circ(-\frac{1}{\varepsilon}C)\right)$.  This solution can be obtained by Sinkhorn's algorithm \cite{Sin64} by alternatively rescaling the rows and columns of $U_k=\exp\left(\circ(\hat U_{1,k}+\hat U_{2,k}-\frac{1}{\varepsilon} C)\right)$.  It turns out that fixing the error $\delta$ and $n_1=n_2=n$,  the complexity of modified Sinkhorn's  algorithm is $O(n^2)$ ignoring logarithmic factors \cite{AWR17}.   In this paper by arithmetic complexity of an algorithm we mean an estimate of number of  flops to carry out this algorithm,  see Appendix \ref{sec:flop}.

Similar results were obtained for entropic relaxation of multi-partite optimal transport problems that will be discussed later \cite{BCN19,TDGU20,HRCK,LHCJ22, Fri20}.
\subsection{Finite dimensional quantum optimal transport}\label{subsec:fdqotp} 
Let $\cH$ be an $n$-dimensional vector space with the inner product $\langle \x,\y\rangle$.
Let $\rB(\cH)\supset \rS(\cH)\supset \rS_{+}(\cH)\supset \rS_{++}(\cH)$ be the space of linear operators, the real subspace of self-adjoint operators, the cone of positive semidefinite operators,  and the open set of positive definite operators respectively.
By choosing a fixed orthonormal basis $|0\rangle,\ldots,|n-1\rangle$ we identify $\cH$  with $\C^n$, where $\langle\x,\y\rangle=\x^*\y$.  Then the above linear operators on $\cH$ correspond to $\C^{n\times n}\supset \rH_n\supset \rH_{n,+}\supset\rH_{n,++}$: the algebra of $n\times n$ complex valued matrices, the Hermitian matrices, the positive semidefinite matrices, and the positive definite matrices respectively.  Denote by $\rH_{n,+,1}\supset \rH_{n,++,1}$ the convex set of density matrices (having trace one) and its interior respectively.   For $A\in \rH_n$ denote by 
$$\lambda_{\max}=\lambda_1(A)\ge \cdots\ge \lambda_n(A)=\lambda_{\min}(A),  \lambda(A)=(\lambda_1(A),\ldots,\lambda_n(A))^\top.$$ 
the $n$ eigenvalues of $A$. Recall that the spectral norm of $A$ is $\|A\|_2=\max(\lambda_{\max}(A),-\lambda_{\min}(A))$, and the Frobenius norm $\|A\|_F=\sqrt{\sum_{j=1}^n \lambda_j^2(A)}$.

Assume that $\cH_i$ is a Hilbert space of dimension $n_i$ for $i\in[d]$.  Let $\cH=\otimes_{i=1}^d \cH_i$.  
Then $\C^{\bn}=\otimes_{i=1}^d\C^{n_i}$ is a representation of $\cH$,  and $\rB(\cH)\supset \rS(\cH)$ are represented by $\C^{\bn\times \bn}\supset   \rH_{\bn}$ respectively.  (Here we identify $\H_{n_1\cdots n_d}$ with $2d$-mode Hermitian tensors $\rH_{\bn}$.)

Let $\rho_i\in\rH_{n_i,+,1}$ for $i\in[d]$, for $d>1$.   The coupling set of $R=(\rho_1,\ldots,\rho_d)$ is the closed convex set of all $d$-partite states whose partial traces are $\rho_i$ for $i\in[d]$  respectively:
\begin{equation}\label{couprs}
\Gamma(R)=\{\rho\in\rH_{\bn,+}, \tr_{\hat i} \rho=\rho_i, i\in [d] \},  R=(\rho_1,\ldots,\rho_d)\}.
\end{equation}
Here, $\tr_{\hat i}\rho$ is the sum of all partial traces of $\rho$ except on the index $i$.
(Observe that if $\rho\in \Gamma(R)$ then $\tr\rho=1$.)
The quantum multi-partite optimal transport optimal transport problem for a given $C\in \rH_{\bn}$ is \cite{CGP20,CM20, FECZ22, CEFZ23}:
\begin{equation}\label{qotp}
\kappa(C,R)=\min_{\rho\in\Gamma(R)} \tr C\rho.
\end{equation}
It is a semidefinite programming problem  (SDP) \cite{VB96} that can be solved efficiently using the interior point programming (IPM) \cite{CEFZ23}.  For $\rho\in\rH_{n,+,1}$ denote by $E_N(\rho)=-\sum_{i=1}^n \lambda_i(\rho)\log\lambda_i(\rho)$ the von Neumann entropy of $\rho$.  The entropic relaxation of the quantum $d$-partite problem is 
\begin{equation}\label{eqotp}
\kappa(C,R,\varepsilon)=\min_{\rho\in\Gamma(R)} \tr C\rho -\varepsilon \rE_N(\rho).
\end{equation}
The analog of the inequality \eqref{bctauin} is
\begin{equation}\label{bqkapin}
\kappa(C,R,\varepsilon)\le \kappa(C,R)\le \kappa(C,R,\varepsilon)+\varepsilon\log(n_1n_2).
\end{equation}
It is shown in \cite[equation (1.2)]{FGP23} that the unique solution of \eqref{eqotp} is of the form
\begin{equation}\label{usolqot}
\begin{aligned}
&\rho=\exp\left(\sum_{i=1}^d \hat U_i -\frac{1}{\varepsilon}C\right) \in \Gamma(R), \\
&\hat U_i=(\otimes_{j=1}^{i-1}I_{n_j})\otimes U_i\otimes (\otimes_{k=i+1}^d I_{n_k}),
\quad U_i\in \rH_{n_i}, i\in [d].
\end{aligned}
\end{equation}
The choice of $U_1,\ldots,U_d$ is not unique, and some normalizations on $U_1,\ldots,U_d$ can be imposed \cite{FGP23}.   The step $k$ of the Sinkhorn algorithm consists of choosing a mode $j_k\in[d]$ and updating $U_{j_k}$ such that the corresponding $\rho_{k+1}$ in the above equation satisfies $\tr_{\widehat {j_k}}\rho_{k+1}=\tr \rho_{j_k}$.  While this condition for the classical Sinkhorn algorithm is very simple, in the quantum case it is computationally complex \cite{CGMP24, RR25}.
\subsection{Our contribution}\label{subsec:ourcon}
In this paper we use a relaxation to the classical and quantum MPOTP and their duals using a barrier function, which is the corner stone of the interior point method (IPM).  For the minimum problems the barrier function has to be a smooth convex functions that blow up on the boundary of a given convex domain, and satisfy a few other technical conditions \cite{NN94, Ren01}.   Observe that the Shannon and von Neumann entropies are not barriers as they don't blow up at the boundary.  In \cite{Fri23} we showed how to apply the IPM method for the classical MPOTP.   We briefly explain our approach for the BPOTP.    

The barrier functions on $\R_{++}^n$ and $\R_{++}^{n_1\times n_2}$ are
\begin{equation}\label{bVP}
\begin{aligned}
&\beta(\x)=-\sum_{i=1}^n \log x_i, \quad \x\in R_{++}^n,\\
&\beta(V)=-\sum_{j_1=j_2=1}^{n_1,n_2}\log v_{j_1,j_2},  \quad V=[v_{j_1,j_2}]\in\R_{++}^{n_1\times n_2}.
\end{aligned}
\end{equation}
In what follows we  denote
\begin{equation}\label{defRbal}
(R^{n_1}\times\R^{n_2})_{++,b}=\{(\x,\y)\in\R_{++}^{n_1}\times\R^{n_2}_{++}, \1_{n_1}^\top \x=\1_{n_2}^\top\y\}.
\end{equation}
For $(\bp_1,\bp_2)\in (R^{n_1}\times\R^{n_2})_{++,b}$ we define $V(P)$ as in \eqref{defUPmat}.   We call $\tau(C,P)$ given by \eqref{TOTmat} the (generalized) classical BPOTP.
Let $\rV_o(P)$ be the interior of $\rV(P)$, which consists of all positive  matrices in $\rV(P)$.   The relaxed BPOTP is
\begin{equation}\label{betBOTP}
\tau_{\beta}(C,P,\varepsilon)=\min_{V\in \rV_o(P)} \tr C^T V+\varepsilon \beta(V).
\end{equation}
We will show that this problem has a unique solution of the form
\begin{equation}\label{bBPOTus}
\begin{aligned}
&U=[u_{j_1,j_2}]\in \rV_o(P),  \\
&u_{j_1,j_2}=\frac{\varepsilon}{c_{j_1,j_2}-u_{j_1,1}-u_{j_2,2}}, \sum_{j_1=1}^{n_1}u_{j_1,1}-\sum_{j_2=1}^{n_2}u_{j_2,2}=0.
\end{aligned}
\end{equation}

The following theorem summarizes one of our main results on the barrier relaxation of the classical BPOTP:
\begin{theorem}\label{dbcBPOTP} Let $(\bp_1,\bp_2)\in (R^{n_1}\times\R^{n_2})_{++,b}$, and define
\begin{equation}\label{DmfbBOTP}
\begin{aligned}
&C(\x,\y)=C-\x\1_{n_2}^\top-\1_{n_1}\y^\top,(\x,\y)\in\R^{n_1}\times\R^{n_2},\\
&\hat \rD=\{(\x,\y)\in\R^{n_1\times n_2}, C(\x,\y)>0\},\\
&\rD=\{(\x,\y)\in\hat\rD,\1_{n_1}^\top\x-\1_{n_2}^\top \y=0\}.
\end{aligned}
\end{equation}
The barrier function for $\hat \rD$ is
\begin{equation}\label{barcBOT}
\beta_C(\x,\y)=-\sum_{j_1=j_2=1}^{n_1,n_2} \log(c_{j_1,j_2}-x_{j_1}-y_{j_2}).
\end{equation}
The function 
\begin{equation}\label{mfbBOTP}
\begin{aligned}
&\varphi(\x,\y,C)=\bp_1^\top\x+\bp_2^\top\y-\varepsilon\beta_C(\x,\y) +n_1n_2\varepsilon(1-\log\varepsilon), 
\end{aligned}
\end{equation}
is concave in $\hat\rD$ and strictly concave in $\rD$.  Furthermore,
\begin{equation}\label{maxfBOTP}
\max_{(\x,\y)\in\hat\rD}\varphi(\x,\y,C)=\max_{(\x,\y)\in \rD}\varphi(\x,\y,C)=\tau_{\beta}(C,P,\varepsilon),
\end{equation}
where $\beta$ and $\tau_{\beta}(C,P,\varepsilon)$ are defined by \eqref{bVP} and \eqref{betBOTP} respectively.
The function $\varphi$ achieves its maximum
exactly on the line 
\begin{equation}\label{defLBP}
\rL=\{(\bu_1,\bu_2)+t(\1_{n_1}, -  \1_{n_2}),t\in\R\},
\end{equation}
which intersects $\rD$ at the unique point $(\bu_1,\bu_2)$, which appears in \eqref{bBPOTus}.
\end{theorem}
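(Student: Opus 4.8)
The statement is precisely the convex (Lagrangian) duality pairing attached to the barrier-regularized problem \eqref{betBOTP}: the plan is to build the Lagrange dual of \eqref{betBOTP}, identify its objective with $\varphi$ on $\hat\rD$, and then read off concavity, strong duality, and the shape of the maximizing set. I begin with concavity. Write $w_{j_1,j_2}=c_{j_1,j_2}-x_{j_1}-y_{j_2}$, so that $\beta_C(\x,\y)=-\sum_{j_1,j_2}\log w_{j_1,j_2}$. Each summand $-\log w_{j_1,j_2}$ is the composition of the convex nonincreasing map $t\mapsto-\log t$ with an affine function, hence convex on $\hat\rD$; thus $\beta_C$ is convex there and $\varphi$, being $-\varepsilon\beta_C$ plus an affine function plus a constant, is concave on $\hat\rD$. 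For the strict concavity on $\rD$ I would compute the Hessian: a direct calculation gives, for $(\ba,\bb)\in\R^{n_1}\times\R^{n_2}$,
\[
(\ba,\bb)^\top\big(\nabla^2\beta_C(\x,\y)\big)(\ba,\bb)=\sum_{j_1,j_2}\frac{(a_{j_1}+b_{j_2})^2}{w_{j_1,j_2}^{2}}\ \ge\ 0,
\]
which vanishes exactly on $\spa\{(\1_{n_1},-\1_{n_2})\}$. Since $\1_{n_1}^\top\1_{n_1}-\1_{n_2}^\top(-\1_{n_2})=n_1+n_2\neq0$, this null direction is transverse to the affine hull of $\rD$, so $\nabla^2\beta_C$ is positive definite along $\rD$ and $\varphi$ is strictly concave there.

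Next I would dualize the marginal constraints $V\1_{n_2}=\bp_1$, $V^\top\1_{n_1}=\bp_2$ of \eqref{betBOTP} with multipliers $\x\in\R^{n_1}$, $\y\in\R^{n_2}$, obtaining
\[
L(V,\x,\y)=\sum_{j_1,j_2}\big(w_{j_1,j_2}\,v_{j_1,j_2}-\varepsilon\log v_{j_1,j_2}\big)+\bp_1^\top\x+\bp_2^\top\y .
\]
Minimizing termwise over $v>0$: the scalar $v\mapsto wv-\varepsilon\log v$ has infimum $-\infty$ when $w\le0$ and value $\varepsilon(1-\log\varepsilon)+\varepsilon\log w$, attained at $v=\varepsilon/w$, when $w>0$. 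Summing the $n_1n_2$ contributions reproduces $\varphi$ exactly: $\inf_{V>0}L(V,\x,\y)=\varphi(\x,\y,C)$ for $(\x,\y)\in\hat\rD$ and $=-\infty$ otherwise. Hence the Lagrange dual of \eqref{betBOTP} is $\sup_{(\x,\y)\in\hat\rD}\varphi(\x,\y,C)$, and weak duality immediately gives $\sup_{\hat\rD}\varphi\le\tau_\beta(C,P,\varepsilon)$.

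To upgrade weak to strong duality, I would first note $\rV_o(P)\neq\emptyset$: with $s=\1_{n_1}^\top\bp_1=\1_{n_2}^\top\bp_2>0$ the matrix $[p_{j_1,1}p_{j_2,2}/s]$ lies in $\rV_o(P)$. On the compact polytope $\rV(P)$ the term $\tr C^\top V$ is bounded while $\varepsilon\beta(V)\to+\infty$ as $V\to\partial\rV(P)$, so the strictly convex objective of \eqref{betBOTP} attains its minimum at a unique $U\in\rV_o(P)$. Since the constraints are affine and the open domain $\R_{++}^{n_1\times n_2}$ of the barrier meets the feasible set, the relative-interior form of Slater's condition holds, so strong duality holds and the dual supremum is attained, say at $(\bu_1,\bu_2)$; complementary slackness forces the inner minimizer of $L(\cdot,\bu_1,\bu_2)$ to be $U$, i.e. $u_{j_1,j_2}=\varepsilon/(c_{j_1,j_2}-u_{j_1,1}-u_{j_2,2})$ with all denominators positive, which is \eqref{bBPOTus}, and $\varphi(\bu_1,\bu_2,C)=L(U,\bu_1,\bu_2)=\tr C^\top U+\varepsilon\beta(U)=\tau_\beta(C,P,\varepsilon)$. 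Equivalently, one may bypass the abstract theorem and verify the KKT system directly at the minimizer $U$ produced above, using that the $n_1+n_2$ marginal constraints have rank $n_1+n_2-1$ and that $\nabla_\x\varphi=\bp_1-\big(\sum_{j_2}u_{j_1,j_2}\big)_{j_1}$ and $\nabla_\y\varphi=\bp_2-\big(\sum_{j_1}u_{j_1,j_2}\big)_{j_2}$ vanish precisely because $U\in\rV_o(P)$.

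Finally I would use the built-in symmetry. Both $\hat\rD$ and $\varphi$ are invariant under $(\x,\y)\mapsto(\x+t\1_{n_1},\y-t\1_{n_2})$: the entries $c_{j_1,j_2}-x_{j_1}-y_{j_2}$ do not change, and $\bp_1^\top\x+\bp_2^\top\y$ does not change because $\1_{n_1}^\top\bp_1=\1_{n_2}^\top\bp_2$. Each such orbit meets $\rD$ in exactly one point (solving $\1_{n_1}^\top(\x+t\1_{n_1})-\1_{n_2}^\top(\y-t\1_{n_2})=0$ for $t$), so $\max_{\hat\rD}\varphi=\max_{\rD}\varphi$, and by the strict concavity above this common value is attained at a single point of $\rD$. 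Because the marginal constraints satisfy the one linear relation ``total mass counted by rows equals total mass counted by columns'', the dual maximizer is determined only up to adding a multiple of $(\1_{n_1},-\1_{n_2})$; imposing $\1_{n_1}^\top\bu_1=\1_{n_2}^\top\bu_2$ as in \eqref{bBPOTus} singles out the unique maximizer lying in $\rD$, and the entire set of maximizers of $\varphi$ over $\hat\rD$ is its translation orbit --- exactly the line $\rL$ of \eqref{defLBP}, which meets $\rD$ only at $(\bu_1,\bu_2)$. This establishes \eqref{maxfBOTP} and the remaining claims. The only step beyond bookkeeping is the strong-duality/attainment assertion, i.e. the constraint qualification; it is handled by the relative-interior Slater condition (affine constraints, open barrier domain meeting the feasible polytope), or, for a self-contained argument, by the direct KKT verification at $U$, whose existence comes from coercivity of the barrier.
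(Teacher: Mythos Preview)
Your proof is correct and covers all the claims. The route differs from the paper's mainly in packaging. You \emph{derive} $\varphi$ as the Lagrange dual of \eqref{betBOTP} by minimizing the separable Lagrangian termwise, and then obtain \eqref{maxfBOTP} via Slater (with the direct KKT check offered as a fallback). The paper instead takes $\varphi$ as given, imports the primal minimizer $U$ and the multipliers $(\bu_1,\bu_2)$ from Lemma~\ref{minubcbpotplem}, verifies by hand that $\nabla_{\x}\varphi(\bu_1,\bu_2)=\nabla_{\y}\varphi(\bu_1,\bu_2)=0$ (exactly your observation that these gradients are $\bp_i$ minus the marginals of $U$), and then proves \eqref{maxfBOTP} by an explicit chain of equalities rewriting $\tr C^\top U+\varepsilon\beta(U)$ into $\varphi(\bu_1,\bu_2,C)$. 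For strict concavity the paper argues that the affine map $(\x,\y)\mapsto C(\x,\y)$ is injective on $\rD$ (reducing to the uniqueness in Lemma~\ref{minuecbpotplem}), whereas your Hessian computation with null space $\spa\{(\1_{n_1},-\1_{n_2})\}$ is a cleaner, self-contained alternative. Your duality framing has the advantage of explaining \emph{where} the constant $n_1n_2\varepsilon(1-\log\varepsilon)$ in $\varphi$ comes from; the paper's direct computation has the advantage of avoiding any appeal to an abstract strong-duality theorem.
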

That is,  the function $\varphi$ is the dual function of the Lagrangian $\tr C^\top V+\varepsilon \beta(V)$.
We show that the above maximum problem is the IPM relaxation of the dual  LP problem corresponding to $\tau(C,P)$, see  part (b) of Lemma \ref{maxu1u2est}:
\begin{equation}\label{dBOTP}
\begin{aligned}
&\tau(C,P)=\max_{(\x,\y)\in\R^{n_1}\times \R^{n_2}, C-\x\1_{n_2}^\top-\1_{n_1}\y^\top\ge 0}\bp_1^\top\x+\bp_2^\top\y=\\
&\sup_{\x,\y\in\hat\rD} \bp_1^\top\x+\bp_2^\top\y=
\sup_{\x,\y\in\rD} \bp_1^\top\x+\bp_2^\top\y
\end{aligned}
\end{equation}

One can apply the IPM method  to the minimum problem \eqref{EMOT} and minus the maximal problem \eqref{maxfBOTP}.   For $n_1=n_2=n$ the minimum problem  \eqref{EMOT} has $(n-1)^2 $ parametrs, while the maximum problem has $2n-1$ parameters.  Thus, the dual maximum characterization is superior.

The beauty and the strength of the IPM method lies in the fact that the value of the parameter $\varepsilon>0$ is determined by the error of the approximation $\delta>|\tau(C,P)-\tau_{\beta}(C,P,\varepsilon)|$ during 
the central path algorithm (CPA) of the IPM \cite{NN94,Ren01}.  The number of iterations of the CPA for the maximum problem \eqref{maxfBOTP} is of order $O(n)$ ignoring the $\log$ factors.  Each iteration of the CPA consists of one step of the Newton method which needs $O(n^3)$ steps for the case $n_1=n_2=n$. Thus, the implementation of IPM has arithmetic comlexity $O(n^4\log (n^2r/\delta))$, which  is significantly inferior to the Sinkhorn algorithm.

We state a Sinkhorn type algorithm to find the unique $\bu_1,\bu_2$. 
It is not an efficient algorithm as the original Sinkhorn algorithm.  It seems to us that the cost this Sinkhorn algorithm is $\log n$ times the cost ot corresponding  classical Sinkhorn algorithm.   

We now discuss the quantum BPOTP. The dual problem to $\kappa(C,R)$ is given by \cite[Theorem 3.2]{CEFZ23}
\begin{equation}\label{dQBOTP}
\kappa(C,R)=\sup_{U_1\in\rH_{n_1},U_2\in \rH_{n_2}, C- U_1\otimes I_{n_2}-I_{n_1}\otimes U_2\succeq 0} \tr (\rho_1 U_1)+\tr(\rho_2 U_2).
\end{equation}
Recall the following conventions: 
$$ A\succeq 0\iff A\in \rH_{n,+},\quad A\succ 0\iff A\in \rH_{n,++}.$$
It is shown in \cite[Theorem 3.2]{CEFZ23} that if $\rho_1, \rho_2$ are positive definite then the supremum in \eqref {dQBOTP} is achieved.   However,  if at least one of the density matrices is singular the supremum may not be achieved \cite[Appendix B.3]{CEFZ23}.  

The classical barrier function on $\rH_{n,++}$ is \cite{NN94,Ren01}:
\begin{equation}\label{barpdm}
\beta(H)=-\log \det H=-\sum_{i=1}^n \log \lambda_i(H), \quad H\in\rH_{n,++}
\end{equation}
It is well known that $\beta(H)$ is a strictly convex function \cite[Theorem 4.8.3]{Frib}.

For $d>1$ define
\begin{equation}\label{balprodHbn}
\begin{aligned}
&(H_{n_1}\times\cdots\times H_{n_d})_{++,b}=\\
&\{(H_{n_1},\dots,H_{n_d}), H_{n_i}\in\rH_{n_i,++}, i\in[d],\tr H_{n_1}=\cdots=\tr H_{n_d}\}.
\end{aligned}
\end{equation}
 In what follows we assume that $(\rho_1,\ldots,\rho_d)\in(H_{n_1}\times\cdots\times H_{n_d})_{++,b}$.    Let $\Gamma(R)$ be defined by \eqref{couprs}.  Then $\Gamma_o(R)=\Gamma(R)\cap\rH_{\bn,++}$. We call $\kappa(C,R)$, given by \eqref{qotp}, the (generalized) quantum MPOTP. 
 
The barrier $\beta$ relaxation of \eqref{qotp} for $\varepsilon>0$ is given by:
\begin{equation}\label{qbMOTP}
\kappa_\beta(C,R, \varepsilon)=\min_{\rho\in\Gamma_o(R)}\tr C\rho + \varepsilon\beta(\rho)
\end{equation}
As $\beta$ is a strictly convex barrier, the above problem has a unique minimal solution $\rho^\star\in \Gamma_o(R)$.  

The following theorem summarizes one of our main results on the barrier relaxation of the quantum BPOTP:
\begin{theorem}\label{dbqBPOTP} Let $(\rho_1,\rho_2)\in (H_{n_1}\times H_{n_2})_{++,b}$.
For a given $C\in\rH_{(n_1,n_2)}$ define
\begin{equation}\label{defqDhD}
\begin{aligned}
&C(X,Y)=C-X\otimes I_{n_2}-I_{n_1}\otimes Y,  (X,Y)\in\rH_{n_1}\times \rH_{n_2},\\
&\hat\rD=\{(X,Y)\in\rH_{n_1}\times \rH_{n_2}, C(X,Y)\in \rH_{(n_1,n_2),++}\},\\
&\rD=\{(X,Y)\in\hat\rD,\tr X-\tr Y=0, \}.
\end{aligned}
\end{equation} 
The barrier function for $\hat \rD$ is
\begin{equation}\label{barqBOT}
\beta_Q(X,Y)=-\log\det C(X,Y),  \quad (X,Y)\in\hat\rD.
\end{equation}
The function 
\begin{equation}\label{mfbqBOTP}
\begin{aligned}
\varphi(X,Y,C)=\tr \rho_1 X+\tr\rho_2 X-
\varepsilon\beta_Q(X,Y)+n_1n_2\varepsilon(1-\log\varepsilon),
\end{aligned}
\end{equation}
is concave in $\hat\rD$ and strictly concave in $\rD$.  Furthermore,
\begin{equation}\label{maxqfBOTP}
\max_{(X,Y)\in\hat\rD}\varphi(X,Y,C)=\max_{(X,Y)\in \rD}\varphi(X,Y,C)=\kappa_{\beta}(C,R,\varepsilon),
\end{equation}
where $\beta$ and $\kappa_{\beta}(C,R,\varepsilon)$ are given \eqref{barpdm} and \eqref{qbMOTP} respectively.
The function $\varphi$ achieves its maximum
exactly on the line 
\begin{equation}\label{defqLBP}
\rL=\{(U_1,U_2)+t(I_{n_1}, -  I_{n_2}),t\in\R\},
\end{equation}
which intersects $\rD$ at the unique point $(U_1,U_2)$.  Furthermore, 
\begin{equation}\label{rhostarex}
\rho^\star=\varepsilon C(X,Y)^{-1}, (X,Y) \in \rL,
\end{equation}
 is the unique point in $\Gamma_o(R)$ at which $\kappa_{\beta}(C,R,\varepsilon)$ is achieved.
\end{theorem}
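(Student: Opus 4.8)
The plan is to recognize $\varphi(\cdot,\cdot,C)$ as the Lagrange dual of the barrier primal \eqref{qbMOTP} and then transport the existence and uniqueness of the interior primal minimizer $\rho^\star\in\Gamma_o(R)$, noted just before the theorem, over to the dual side. Dualize the two marginal constraints $\tr_{\hat 1}\rho=\rho_1$, $\tr_{\hat 2}\rho=\rho_2$ by self-adjoint multipliers $X\in\rH_{n_1}$, $Y\in\rH_{n_2}$; using $\tr\bigl(X\,\tr_{\hat 1}\rho\bigr)=\tr\bigl((X\otimes I_{n_2})\rho\bigr)$ and $\tr\bigl(Y\,\tr_{\hat 2}\rho\bigr)=\tr\bigl((I_{n_1}\otimes Y)\rho\bigr)$, the Lagrangian collapses to
\[
L(\rho,X,Y)=\tr\bigl(C(X,Y)\rho\bigr)-\varepsilon\log\det\rho+\tr\rho_1 X+\tr\rho_2 Y,\qquad \rho\in\rH_{(n_1,n_2),++}.
\]
For fixed $(X,Y)$ this is strictly convex in $\rho$, and $\nabla_\rho L=C(X,Y)-\varepsilon\rho^{-1}=0$ has the solution $\rho=\varepsilon C(X,Y)^{-1}$, which is positive definite exactly when $(X,Y)\in\hat\rD$. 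Substituting, and using $-\varepsilon\log\det\bigl(\varepsilon C(X,Y)^{-1}\bigr)=-\varepsilon n_1n_2\log\varepsilon+\varepsilon\log\det C(X,Y)$, gives $\inf_{\rho\succ 0}L(\rho,X,Y)=\varphi(X,Y,C)$ on $\hat\rD$; for $C(X,Y)\notin\rH_{(n_1,n_2),++}$ one drives $\rho$ along a direction on which $\tr(C(X,Y)\rho)$ is non-increasing while $\det\rho\to\infty$, so the infimum is $-\infty$. Being an infimum of functions affine in $(X,Y)$, $\varphi$ is concave on $\hat\rD$; and weak duality reads $\varphi(X,Y,C)\le\kappa_{\beta}(C,R,\varepsilon)$ there, whence $\max_{\rD}\varphi\le\max_{\hat\rD}\varphi\le\kappa_{\beta}(C,R,\varepsilon)$.

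For strict concavity on $\rD$, differentiate $\varepsilon\log\det C(X,Y)$ twice: its Hessian in a direction $(\dot X,\dot Y)$ equals $-\varepsilon\tr\bigl[(C(X,Y)^{-1}D)^2\bigr]$ with $D=\dot X\otimes I_{n_2}+I_{n_1}\otimes\dot Y$, which is $\le 0$ and, since $C(X,Y)^{-1}$ is invertible, vanishes only when $D=0$; the standard spectral identity for $A\otimes I+I\otimes B$ then forces $(\dot X,\dot Y)\in\spa\{(I_{n_1},-I_{n_2})\}$. On the tangent space of $\rD$ one has $\tr\dot X=\tr\dot Y$, which rules out every nonzero multiple of $(I_{n_1},-I_{n_2})$ because $\tr(I_{n_1})+\tr(I_{n_2})=n_1+n_2>0$. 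Hence $\varphi$ is strictly concave on $\rD$.

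The core step is strong duality together with attainment. Since the constraint map $\rho\mapsto(\tr_{\hat 1}\rho-\rho_1,\tr_{\hat 2}\rho-\rho_2)$ is affine, the convex program \eqref{qbMOTP} satisfies the KKT conditions at its interior minimizer $\rho^\star$ without any further constraint qualification: there are $X^\star\in\rH_{n_1}$, $Y^\star\in\rH_{n_2}$ with $C-X^\star\otimes I_{n_2}-I_{n_1}\otimes Y^\star-\varepsilon(\rho^\star)^{-1}=0$. Thus $C(X^\star,Y^\star)=\varepsilon(\rho^\star)^{-1}\succ 0$, so $(X^\star,Y^\star)\in\hat\rD$ and $\rho^\star=\varepsilon C(X^\star,Y^\star)^{-1}$; evaluating $L$ at $(\rho^\star,X^\star,Y^\star)$ and using primal feasibility of $\rho^\star$ to kill the multiplier terms gives $\varphi(X^\star,Y^\star,C)=L(\rho^\star,X^\star,Y^\star)=\tr C\rho^\star+\varepsilon\beta(\rho^\star)=\kappa_{\beta}(C,R,\varepsilon)$. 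With weak duality this yields $\max_{\hat\rD}\varphi=\kappa_{\beta}(C,R,\varepsilon)$, attained at $(X^\star,Y^\star)$. (The two matrix constraints are jointly redundant since $\tr\rho_1=\tr\rho_2$ is automatic; this only makes the multiplier non-unique, and if one prefers to avoid the shortcut "affine constraints $\Rightarrow$ KKT'' one may restrict the constraint map to the codimension-one subspace it surjects onto, obtain a multiplier there, and lift it arbitrarily.)

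For the geometry of the optimal set: for every $t\in\R$ one has $C(X^\star+tI_{n_1},Y^\star-tI_{n_2})=C(X^\star,Y^\star)$ and $\tr\rho_1(X^\star+tI_{n_1})+\tr\rho_2(Y^\star-tI_{n_2})=\tr\rho_1 X^\star+\tr\rho_2 Y^\star$ (using $\tr\rho_1=\tr\rho_2$), so $\varphi$ is constant on the line $\rL$, which therefore consists of maximizers; conversely, if $(X',Y')$ is another maximizer then by concavity the segment to $(X^\star,Y^\star)$ carries only maximizers and $\varphi$ is affine along it, so the Hessian analysis above forces $(X'-X^\star,Y'-Y^\star)\in\spa\{(I_{n_1},-I_{n_2})\}$ — hence the maximizer set is exactly $\rL$. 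The slice $\tr X-\tr Y=0$ meets $\rL$ in the single point $(U_1,U_2)$ (taking $t=-(\tr X^\star-\tr Y^\star)/(n_1+n_2)$), giving $\max_{\rD}\varphi=\kappa_{\beta}(C,R,\varepsilon)$, and strict concavity makes $(U_1,U_2)$ its unique maximizer on $\rD$; finally $\rho^\star=\varepsilon C(X,Y)^{-1}$ for any $(X,Y)\in\rL$ since $C(\cdot,\cdot)$ is constant on $\rL$, and uniqueness of this point in $\Gamma_o(R)$ is exactly the uniqueness of the primal minimizer. The one genuinely delicate point is the passage from the "$\inf$'' identity of the first paragraph to an \emph{attained} maximum: it rests entirely on the primal minimum being attained at an interior $\rho^\star$ (strict convexity of the barrier on the compact set $\Gamma(R)$, with $\tfrac{1}{\tr\rho_1}\rho_1\otimes\rho_2\in\Gamma_o(R)$ ensuring the interior is nonempty) together with the automatic KKT conditions for affine-constrained convex programs; the constraint redundancy, which is precisely what turns the optimal face into a line rather than a point, must be acknowledged but creates no real difficulty.
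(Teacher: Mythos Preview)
Your proof is correct. The approach is close in spirit to the paper's but is organized through the lens of Lagrangian duality: you explicitly identify $\varphi$ as the infimum over $\rho\succ 0$ of the Lagrangian $L(\rho,X,Y)$, obtain concavity and weak duality for free, and then close the gap via KKT at the interior primal minimizer $\rho^\star$. The paper instead first establishes the form \eqref{qbBPOTus} of $\rho^\star$ (Lemma~\ref{minubqbpotplem}, itself a Lagrange-multiplier computation), then verifies by hand that the resulting $(U_1,U_2)$ is a critical point of $\varphi$ through coordinate-wise differentiation, and finally checks the equality $\varphi(U_1,U_2,C)=\kappa_\beta(C,R,\varepsilon)$ by the direct chain~\eqref{dkapRC}. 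Both routes rest on the same KKT identity $C(U_1,U_2)=\varepsilon(\rho^\star)^{-1}$; your packaging makes the duality structure explicit and replaces the coordinate computation of $\nabla\varphi$ and the algebraic identity~\eqref{dkapRC} by the one-line saddle-point evaluation $L(\rho^\star,X^\star,Y^\star)=\kappa_\beta$. Your Hessian argument for strict concavity on $\rD$ (via the spectrum of $\dot X\otimes I+I\otimes\dot Y$) is the infinitesimal version of the paper's injectivity argument reducing to implication~\eqref{U1U2imp}; and your remark on the redundancy of the two marginal constraints is exactly what the paper handles by the trace-balancing shift $t=(\tr U_2-\tr U_1)/(n_1+n_2)$.
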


We will show how to apply the IPM to \eqref{maxqfBOTP}.   The  arithmetic complexity of CPA for the case $n_1=n_2=n$ is $O(n^7\log n)$.  
\subsection{Brief survey of the contents of the paper}\label{subsec:contents}
Section \ref{sec:cbotp} discusses the entropic and barrier relaxations of the classical BPOTP.
In subsection \ref{subsec:cbotp} we summarize briefly the results of Cuturi \cite{Cut13} for the entropic relaxation of the classical BPOPT.  In subsection \ref{subsec:brcbot} we discuss the barrier relaxation of the classical BPOPT. 
We prove Theorem  \ref{dbcBPOTP}.  In subsection \ref{subsec:ipmcBP} we discuss 
the IPM method for approximating $\tau(C,P)$ within precision $\delta$.  To implement the IPM method for the dual problem \eqref{dBOTP} we need to specify an open Euclidean ball  $\rB_o((\x_0,\y_0),r)$, with radius $r$, which contains the maximal solution of $\varphi(\x,\y,C)$ in the domain $\rD$.   Furthermore, we need to augment the barrier \eqref{barcBOT} to the barrier $\hat\beta$  (\eqref{hbcBT}) by adding the barrier of  $\rB_o((\x_0,\y_0),r)$. Theorem \ref{IPMcBP} shows that the arithmetic complexity of the IPM method to compute $\tau(C,P)$ within $\delta$ approximation is $O(n^4\log\frac{n^2r}{\delta})$.  Subsection \ref{subsec:GSa} discusses a generalized Sinkhorn algorithm for computing $\tau_{\beta}(C,P,\varepsilon)$.   It is slower than the classical Sinkhorn algorithm for the entropic relaxation by a factor $\log n$.

Section  \ref{sec:cmotp} discusses the entropic and barrier relaxations of the classical $d$-partite OTP.  We extend the results of subsections \ref{subsec:cbotp} - \ref{subsec:ipmcBP} to MPOTP.   Theorem \ref{IPMcMP} shows that the arithmetic complexity of the IPM method to compute $\tau(C,P)$ within $\delta$ approximation is $O\big(n^{3d/2}\log\frac{n^dr}{\delta}\big)$ for $d\ge 3$.  

Section \ref{sec:qbotp} discusses quantum bi-partite optimal control and its relaxations.  In subsection \ref{subsec:brqbot} we prove Theorem \ref{dbqBPOTP}. 
In subsection \ref{subsec:ipmqBP} we discuss interior point method for finding a barrier approximation of $\kappa(C,R)$ (\eqref{qotp}).   To implement the IPM method for the dual problem \eqref{maxqfBOTP} we need to specify an open Euclidean ball  $\rB_o((X_0,Y_0),r)$, with radius $r$, which contains the maximal solution of $\varphi(X,Y,C)$ in the domain $\rD$.   Furthermore, we need to augment the barrier \eqref{barqBOT}to the barrier $\hat\beta$  (\eqref{hbqBT}) by adding the barrier of  $\rB_o((\x_0,\y_0),r)$. Theorem \ref{IPMqBP} shows that the arithmetic complexity of the IPM method to compute $\kappa(C,R)$ within $\delta$ approximation is $O(n^7\log\frac{n^2r}{\delta})$.  

Section \ref{sec:qmotp} discusses briefly the extensions of our results in Section \ref{sec:qbotp} to quantum multipartite-partite optimal control.   Theorem \ref{IPMqMP} shows that the arithmetic complexity of the IPM method to compute $\kappa(C,R)$ within $\delta$ approximation is $O(n^{7d/2}\log\frac{n^dr}{\delta})$.  
\section{Classical bi-partite optimal control and its relaxations}\label{sec:cbotp}
In this section we discuss first the  known results on the classical BPOTP and its entropic relaxation.  Second we discuss the barrier relaxations the primary and dual of classical BPOTP.
\subsection{An entropic relaxation of the classical bi-partite optimal transport problem}\label{subsec:cbotp}
For $i\in\{1,2\}$ let $X_i$ be a random variables on $\Omega_i$ which has a finite number of values: $X_i:\Omega_i\to [n_i]$.
 Assume that $\bp_i=(p_{1,i},\ldots,p_{n_i,i})^\top$ is the column probability vector that gives the distribution of $X_i$: $\mathbb{P}(X_i=j)=p_{j,i}$ for $i\in[2]$.
 Let $Z$ be a random variable $Z: \Omega_1\times \Omega_2\to [n_1]\times[n_2]$ with contingency matrix (table)  $U\in\R_+^{n_1\times n_2}$ that gives the distribution of $Z$:
$\mathbb{P}(Z=(j_1,j_2))=u_{j_1,j_2}$.  
The classical BPOTP is given by \eqref{TOTmat}.

For $n_1=n_2=n$ and a symmetric cost matrix $C=[c_{ij}]$ with zero diagonal, and positive off-diagonal entries satisfying the triangle inequality $c_{ik}\le c_{ij}+c_{jk}$, the quantity  $\tau(C,P)$ gives rise to a distance between probability vectors $\bp_1$ and $\bp_2$ \cite{Fri20}, which can be viewed as two histograms. 

The entropic relaxation of the classical BPOTP is given by \eqref{EMOT}.
As the Shannon entropy $E_S$ is strictly concave,  the optimization problem \eqref{EMOT} has a unique solution in the relative interior $\rU(P)$.  For simplicity of the exposition we assume in the rest of the paper that $\bp_1,\bp_2>\0$, unless stated otherwise.   By reducing dimensions, if needed,  one can always assume this condition.  See the arguments for the quantum bi-partite case  \cite[Proposition 2.4]{CEFZ23}.  

The following lemma is well known \cite{Fri20} and we bring its proof for completeness:
\begin{lemma}\label{minuecbpotplem}
Let $\bp_i\in\Delta^{n_i},n_i>1,i\in[2]$ be positive probability vectors, and assume that $\rV(P)$ be defined by \eqref{defUPmat}.  
\begin{enumerate}[(a)]
\item 
Consider the minima $\tau(C,P)$ and $\tau(C,P,\varepsilon)$ that are given by \eqref{TOTmat} and \eqref{EMOT} respectively. Then \eqref{bctauin} holds.
In particular for a given $\delta>0$ choose $\varepsilon=\frac{\delta}{\log (n_1 n_2)}$ to obtain the inequality $|\tau(C,P)-\tau(C,P,\varepsilon)|\le \delta$.
\item The unique minimizing matrix $U\in \rV_o(P)$ for \eqref{EMOT} is of the form \eqref{minsolebtot}, where $\bu_1$ and $\bu_2$ are unique.
\end{enumerate}
\end{lemma}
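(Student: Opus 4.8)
The lemma has two parts. Part (a) is the inequality chain $\tau(C,P,\varepsilon)\le\tau(C,P)\le\tau(C,P,\varepsilon)+\varepsilon\log(n_1n_2)$, which I would derive purely from the definitions \eqref{TOTmat} and \eqref{EMOT} together with the elementary bound $0\le E_S(V)\le\log(n_1n_2)$ valid for any $V\in\rV(P)$ (the entropy of a probability distribution on $n_1n_2$ points). Part (b) is the identification of the minimizer of the entropic problem \eqref{EMOT} with the rescaling formula \eqref{minsolebtot}, together with uniqueness of the vectors $\bu_1,\bu_2$ (up to the stated normalization).

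**Part (a).** For the lower bound, note $E_S(V)\ge 0$ on $\rV(P)$, so $\tr C^\top V-\varepsilon E_S(V)\le\tr C^\top V$ for every feasible $V$; minimizing both sides over $\rV(P)$ gives $\tau(C,P,\varepsilon)\le\tau(C,P)$. For the upper bound, let $V^\star$ be the optimal coupling for $\tau(C,P)$ (a vertex of the polytope). Then $\tau(C,P,\varepsilon)=\tr C^\top U-\varepsilon E_S(U)$ for the entropic optimizer $U$; since $U$ is feasible for the linear problem, $\tr C^\top U\ge\tau(C,P)$, hence $\tau(C,P,\varepsilon)\ge\tau(C,P)-\varepsilon E_S(U)\ge\tau(C,P)-\varepsilon\log(n_1n_2)$. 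Rearranging yields the claim. The final sentence about choosing $\varepsilon=\delta/\log(n_1n_2)$ is then immediate from the two-sided bound. This part is routine.

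**Part (b).** This is the main content. I would argue that \eqref{EMOT} is a convex program with strictly convex objective $-E_S$ on the affine polytope $\rV(P)$, so it has a unique minimizer $U$, and because $-E_S$ has infinite gradient on the boundary of the nonnegative orthant, $U$ lies in the relative interior $\rV_o(P)$, i.e.\ $U>\0$. I would then write the Lagrangian with multipliers for the $n_1+n_2-1$ independent marginal constraints (the marginals sum to the same total $1$, so one constraint is redundant — this is exactly why the normalization $\1_{n_1}^\top\bu_1-\1_{n_2}^\top\bu_2=0$ appears and why one coordinate is free). Stationarity $\partial/\partial v_{j_1,j_2}$ of $\sum c_{j_1,j_2}v_{j_1,j_2}+\varepsilon\sum v_{j_1,j_2}\log v_{j_1,j_2}$ plus the linear terms in the multipliers gives $c_{j_1,j_2}+\varepsilon(\log v_{j_1,j_2}+1)=\text{(multiplier}_{j_1})+(\text{multiplier}_{j_2})$, which after absorbing constants into the multipliers and exponentiating is precisely $u_{j_1,j_2}=\exp((\hat U_1+\hat U_2)_{j_1,j_2}-\frac{1}{\varepsilon}c_{j_1,j_2})$ of the form \eqref{minsolebtot}. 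Feasibility $U\in\rV(P)$ then forces the $2n_1+2n_2$ scaling equations; existence of a solution follows from the existence of the interior minimizer just established, and the normalization $\1_{n_1}^\top\bu_1-\1_{n_2}^\top\bu_2=0$ pins down the one-dimensional gauge freedom $(\bu_1,\bu_2)\mapsto(\bu_1+t\1_{n_1},\bu_2-t\1_{n_2})$.

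**Uniqueness of $\bu_1,\bu_2$.** The delicate point — and the one I expect to be the main obstacle — is showing that the scaling vectors are unique after normalization, not merely that the matrix $U$ is unique. I would argue: suppose $(\bu_1,\bu_2)$ and $(\bu_1',\bu_2')$ both produce the same (unique) matrix $U$ via \eqref{minsolebtot}. Taking entrywise logarithms, $(\bu_1-\bu_1')_{j_1}+(\bu_2-\bu_2')_{j_2}=0$ for all $j_1,j_2$; fixing $j_2$ shows $\bu_1-\bu_1'$ is a constant vector $t\1_{n_1}$, and then $\bu_2-\bu_2'=-t\1_{n_2}$. The normalization constraint $\1_{n_1}^\top\bu_1-\1_{n_2}^\top\bu_2=0$ applied to both pairs forces $n_1 t+n_2 t=0$, hence $t=0$. (Here one uses $n_1,n_2>0$.) This establishes uniqueness of the normalized pair and completes the proof. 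If the paper prefers, the existence half of (b) can instead be quoted from the cited reference \cite{Fri20,Cut13}, with only the uniqueness-of-scalings argument spelled out; but since the lemma is stated "for completeness," I would give the full Lagrangian derivation.
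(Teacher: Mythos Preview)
Your proposal is correct and follows essentially the same approach as the paper: entropy bounds for (a), strict convexity plus boundary blow-up to force an interior minimizer, Lagrange multipliers to derive the form \eqref{minsolebtot}, and the gauge shift $(\bu_1,\bu_2)\mapsto(\bu_1+t\1_{n_1},\bu_2-t\1_{n_2})$ to reach the normalization. The only cosmetic difference is in the uniqueness step: you argue by taking two normalized solutions and showing their difference is $(t\1_{n_1},-t\1_{n_2})$ with $(n_1+n_2)t=0$, whereas the paper sums the equations $u_{j_1,1}+u_{j_2,2}=a_{j_1,j_2}$ over rows and columns to write down explicit formulas for $\bu_1,\bu_2$ in terms of $A$ and $t=\1_{n_1}^\top A\1_{n_2}/(n_1+n_2)$; these are equivalent linear-algebra arguments.
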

\begin{proof}
(a) Recall that for $\x\in\Delta^n$ we have the inequalities $0\le \rE_S(\x)\le \log n$.
These inequalites yield straightforward (a).

\noindent
(b) The quantity $\tau(C,P,\varepsilon)$ is a constrained minimum problem.   As $-E_S$ is strictly convex, and a directional derivative of $-E_S$ toward the entrior of $\rU(P)$ at any boundary point is $-\infty$ it follows that the unique minimum point $U=[u_{j_1,j_2}]$ is in $\rV_o(P)$.  That is, $U\in\R_{++}^{n_1\times n_2}$.  Recall that $\rV(P)$ is given by $n_1+n_2$ equalities given by \eqref{defUPmat}.  
(Note we have the equality that the sum of the rows are equal to the sum of the columns of $V\in\R^{n_1\times n_2}$.)
Hence, we can use the Lagrange multiplies: $\lambda=(\lambda_1,\ldots, \lambda_{n_1})^\top,\nu=(\nu_1,\ldots,\nu_{n_2})^\top$ corresponding to the row and column sums of $V\in\rV(P)$.   Apply the Lagrange multipliers to deduce \cite{Fri20}
\begin{equation*}
c_{j_1,j_2}+\varepsilon (1+\log u_{j_1,j_2})-\lambda_{j_1}-\mu_{j_2}=0, \quad j_1\in[n_1],j_2\in[n_2].
\end{equation*}
Observe that if we replace $\lambda_{j_1}$ and $\mu_{j_2}$ by  $\lambda_{j_1}+t$ and $\mu_{j_2}-t$ for $j_1\in[n_1],j_2\in[n_2]$ we still have the above equalites.  Thus, we can choose a unique $t$ such that $\sum_{j_1=1}^{n_1}(\lambda_{j_1}-\varepsilon+t)=\sum_{j_2=1}^{n_2}(\mu_{j_2}-t)$, where $u_{j_1,1}=(\lambda_{j_1}-\varepsilon+t)/\varepsilon, u_{j_2,2}=(\mu_{j_2}-t)/\varepsilon$.   This shows that the unique minimal $U$ is of the form \eqref{minsolebtot}.   
It is left to show that $\bu_1,\bu_2$ are unique.   Since the minimal matrix $U=[u_{j_1,j_2}]$ is unique,  and $C$ is known,   we obtain that $\bu_1,\bu_2$ satisfy the equalities:
\begin{equation}\label{u1u2eq}
u_{j_1,1}+u_{j_2,2}=\log u_{j_1,j_2}+c_{j_1,j_2}/\varepsilon=a_{j_1,j_2}, \quad j_1\in[n_1],j_2\in[n_2],
\end{equation}
where $A=[a_{j_1,j_2}]$ is known.  Recall that $\1_{n_1}^\top \bu_1=1_{n_2}^\top \bu_2=s$.  Sum up the $n_1n_2$ equalities in \eqref{u1u2eq} to deduce that $(n_1+n_2)s=\1_{n_1}^\top A\1_{n_2}$.  Sum up the equalites in \eqref{u1u2eq} on either $j_2$ or $j_1$ to obtain
\begin{equation*}
\bu_1=\frac{1}{n_2}(A\1_{n_2}-s\1_{n_1}),  \quad\bu_2=\frac{1}{n_1}(A^\top\1_{n_1}-s\1_{n_2}), \quad s=\frac{\1_{n_1}^\top A\1_{n_2}}{n_1+n_2}.
\end{equation*}
Hence $\bu_1,\bu_2$ are unique.
\end{proof}

The regularization term gives almost linear time approximation $O(n^2)$, ignoring the logarithmic terms, using a variation of the celebrated Sinkhorn algorithm for matrix diagonal scaling \cite{AWR17}.    Recall that the Sinkhorn algorithm is an alternating row scaling to obtain the condition that $U_{2k-1}\1_{n_2}=\bp_1$,  and then  column scaling to obtain the condition $U^\top_{2k}1_{n_1}=\bp_2$ for $k\in\N$, starting with a matrix $U_0\in\R_{++}^{n_1\times n_2}$.

The problem with entropic relaxation is that we need to rescale the matrix $\exp(\circ(-\frac{C}{\varepsilon}))$.  Then,  if $\frac{\min_{j_1\in[n_1],j_2\in[n_2]}|c_{j_1,j_2}|}{\max_{j_1\in[n_1],j_2\in[n_2]}|c_{j_1,j_2}|}>0$ but small, and $\varepsilon$ is small ($=10^{-8})$,  then many entries of $\exp(\circ(-\frac{C}{\varepsilon}))$ would be treated as zero entries in the scaling algorithm.
\subsection{A barrier relaxation of the classical bi-partite optimal transport problem}\label{subsec:brcbot}
A barrier relaxation of the classical BPOTP is given by \ref{betBOTP}, where $\beta(V)$ is given by \eqref{bVP}.  The following lemma is an analog to Lemma \ref{minuecbpotplem}:
\begin{lemma}\label{minubcbpotplem}
Let $\bp_1\in\R^{n_1},\bp_2\in\R^{n_2}$ be positive probability vectors.  Assume that $\rV(P)$ be defined by \eqref{defUPmat}.  
\begin{enumerate}[(a)]
\item 
Consider the minima $\tau(C,P)$ and $\tau_{\beta}(C,P,\varepsilon)$ that are given by \eqref{TOTmat} and \eqref{betBOTP}, respectively. Then
\begin{equation*}\label{btauin}
 \tau(C,P)<\tau_\beta(C,P,\varepsilon)
\end{equation*}
\item The unique minimizing matrix $U\in \rV_o(P)$ for \eqref{betBOTP}, is of the form \eqref{bBPOTus} where $\bu_1$ and $\bu_2$ are unique.
\end{enumerate}
\end{lemma}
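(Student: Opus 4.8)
The plan is to mirror the structure of the proof of Lemma~\ref{minuecbpotplem}, replacing the entropy $-\rE_S$ by the barrier $\beta$ and tracking where the two arguments diverge. For part~(a), the strict inequality $\tau(C,P)<\tau_\beta(C,P,\varepsilon)$ should follow by comparing the two minimization problems: any $V\in\rV_o(P)$ is in particular a feasible point for \eqref{TOTmat}, so $\tau(C,P)\le\tr C^\top V<\tr C^\top V+\varepsilon\beta(V)$ as soon as $\beta(V)>0$ — but $\beta(V)$ need not be positive in general, so instead I would argue: take the (guaranteed to exist, by the strict convexity of the barrier and its blow-up at the boundary — see part~(b)) minimizer $U\in\rV_o(P)$ of \eqref{betBOTP}; then $\tau_\beta(C,P,\varepsilon)=\tr C^\top U+\varepsilon\beta(U)$. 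Pick any optimal vertex $V^\star\in\rV(P)$ of the LP \eqref{TOTmat}, so $\tau(C,P)=\tr C^\top V^\star$. Since $V^\star$ is a vertex it lies on the boundary of $\rV(P)$ (assuming $n_1,n_2>1$, it has a zero entry), while $U$ is interior; consider the segment $V_s=(1-s)V^\star+sU$ for $s\in(0,1]$. As $s\to0^+$ we have $\tr C^\top V_s\to\tau(C,P)$ but $\beta(V_s)\to+\infty$. Hence for small $s$, $\tr C^\top V_s+\varepsilon\beta(V_s)$ is large; this does not immediately give the inequality. The clean route is: $U$ is not a minimizer of the \emph{linear} functional (generically), and more to the point $\tr C^\top U>\tau(C,P)$ would suffice only if $\beta(U)\ge0$. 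I would instead invoke optimality directly: at the optimum $U$ of \eqref{betBOTP}, the gradient condition \eqref{bBPOTus} holds, and $\varepsilon\beta(U)=-\varepsilon\sum\log u_{j_1,j_2}$ can be rewritten using \eqref{bBPOTus} as $-\varepsilon\sum\log\frac{\varepsilon}{c_{j_1,j_2}-u_{j_1,1}-u_{j_2,2}}$; combined with complementary-slackness-type manipulations this yields $\tr C^\top U+\varepsilon\beta(U)>\min_V\tr C^\top V$ strictly because the barrier term, unlike at a true LP optimum, forces $U$ strictly into the interior where no LP optimum can sit. The cleanest formulation is simply: $\tau_\beta(C,P,\varepsilon)\ge\inf_{V\in\rV_o(P)}\tr C^\top V+\varepsilon\inf_{V\in\rV_o(P)}\beta(V)$ is \emph{false} as a bound, so I will instead just note $\tau_\beta(C,P,\varepsilon)=\tr C^\top U+\varepsilon\beta(U)\ge\tau(C,P)+\varepsilon\beta(U)$ and then observe that $\beta$ restricted to $\rV(P)$ attains a finite minimum $m$ at some interior point $W$ (strict convexity + boundary blow-up), with $\beta(U)\ge m$; if $m>0$ we are done, and one checks $m>0$ via the AM–GM bound $\beta(V)=-\sum\log v_{j_1,j_2}\ge -n_1n_2\log\big(\frac{1}{n_1n_2}\sum v_{j_1,j_2}\big)=-n_1n_2\log\frac{1}{n_1n_2}=n_1n_2\log(n_1n_2)>0$ since $\sum v_{j_1,j_2}=1$. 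That last AM–GM estimate is the real content of part~(a).

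For part~(b), I would argue exactly as in Lemma~\ref{minuecbpotplem}(b). First, $\beta$ is a strictly convex function on $\rV_o(P)$ (it is a sum of strictly convex functions $-\log v_{j_1,j_2}$ precomposed with the affine slice), so the constrained minimum of $\tr C^\top V+\varepsilon\beta(V)$ over the relatively open convex set $\rV_o(P)$, if attained, is unique. Attainment and interiority follow from the barrier property: as $V$ approaches any relative-boundary point of $\rV(P)$, some $v_{j_1,j_2}\to0$ and $\beta(V)\to+\infty$, while $\tr C^\top V$ stays bounded on the compact $\rV(P)$; hence sublevel sets of the objective are compact subsets of $\rV_o(P)$ and a minimizer $U$ exists and is interior. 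Next, write the Lagrangian for the $n_1+n_2$ affine marginal constraints with multipliers $\lambda\in\R^{n_1}$, $\mu\in\R^{n_2}$; stationarity in $v_{j_1,j_2}$ gives $c_{j_1,j_2}-\varepsilon/u_{j_1,j_2}-\lambda_{j_1}-\mu_{j_2}=0$, i.e. $u_{j_1,j_2}=\varepsilon/(c_{j_1,j_2}-\lambda_{j_1}-\mu_{j_2})$. Exactly as before, $(\lambda,\mu)$ can be shifted by $(t\1_{n_1},-t\1_{n_2})$ without changing the $u_{j_1,j_2}$, so choose the unique $t$ making $\sum_{j_1}\lambda_{j_1}=\sum_{j_2}\mu_{j_2}$, rename the shifted multipliers $u_{j_1,1}:=\lambda_{j_1}$, $u_{j_2,2}:=\mu_{j_2}$, and obtain the form \eqref{bBPOTus}. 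Finally, uniqueness of $\bu_1,\bu_2$: from the now-fixed unique matrix $U$ and the known $C$ we recover $u_{j_1,1}+u_{j_2,2}=c_{j_1,j_2}-\varepsilon/u_{j_1,j_2}=:a_{j_1,j_2}$ for all $j_1,j_2$, i.e. the linear system \eqref{u1u2eq} with known right-hand side $A=[a_{j_1,j_2}]$; together with the normalization $\1_{n_1}^\top\bu_1=\1_{n_2}^\top\bu_2$ this determines $\bu_1,\bu_2$ uniquely by the same summation argument used in Lemma~\ref{minuecbpotplem}(b), giving $\bu_1=\tfrac1{n_2}(A\1_{n_2}-t\1_{n_1})$, $\bu_2=\tfrac1{n_1}(A^\top\1_{n_1}-t\1_{n_2})$, $t=\tfrac{\1_{n_1}^\top A\1_{n_2}}{n_1+n_2}$.

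I expect the only genuine subtlety is part~(a): making the inequality \emph{strict} and in the right direction. The entropy version had $\tau(C,P,\varepsilon)\le\tau(C,P)$ because $\rE_S\ge0$ is subtracted; here $\beta$ is \emph{added}, and the correct statement flips to $\tau(C,P)<\tau_\beta(C,P,\varepsilon)$, which is forced by $\beta>0$ on all of $\rV(P)$ — this is precisely the AM–GM bound $\beta(V)\ge n_1n_2\log(n_1n_2)$ under $\sum v_{j_1,j_2}=\tr\big(\1_{n_1}\1_{n_2}^\top\big)^\top V=1$ (using $\1_{n_1}^\top\bp_1=1$). Strictness then comes from $\varepsilon>0$ and $\beta(U)>0$. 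The rest is a line-by-line transcription of the entropic proof with $-\varepsilon(1+\log u_{j_1,j_2})$ replaced by $-\varepsilon/u_{j_1,j_2}$ in the stationarity equation, so I would keep it brief.
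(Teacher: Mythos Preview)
Your proposal is correct and follows essentially the same route as the paper. The paper's proof of (a) is the one-line observation ``$\beta(V)>0$ for $V\in\rV_o(P)$'', which is exactly what your AM--GM bound $\beta(V)\ge n_1n_2\log(n_1n_2)$ (valid because $\sum_{j_1,j_2}v_{j_1,j_2}=\1_{n_1}^\top\bp_1=1$) establishes; your detours through vertices and segments are unnecessary and can be deleted, keeping only the chain $\tau_\beta(C,P,\varepsilon)=\tr C^\top U+\varepsilon\beta(U)\ge\tau(C,P)+\varepsilon\beta(U)>\tau(C,P)$. For (b) your argument---strict convexity plus boundary blow-up for existence/uniqueness, Lagrange multipliers giving $c_{j_1,j_2}-\varepsilon/u_{j_1,j_2}-\lambda_{j_1}-\mu_{j_2}=0$, the shift $(\lambda,\mu)\mapsto(\lambda+t\1_{n_1},\mu-t\1_{n_2})$ to balance the sums, and then the summation argument from Lemma~\ref{minuecbpotplem}(b) for uniqueness of $(\bu_1,\bu_2)$---is line-for-line the paper's proof.
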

\begin{proof} (a) Follows form the observation that $\beta(V)>0$ for $V\in\rV_o(P)$.

\noindent (b) As $\tr (C^\top V)+\varepsilon \beta(V)$ is strictly convex on $\rV_o(P)$
and is $\infty$ on $\partial \rV_o(P)$ it follows that $\tau_{\beta}(C,P,\varepsilon)$ is achieved at the unique point $U=[u_{j_1,j_2}]\in \rV_o(P)$.   Use Lagrange multipliers to deduce
\begin{equation*}
c_{j_1,j_2}-\frac{\varepsilon}{u_{j_1,j_2}}-\lambda_{j_1}-\mu_{j_2}=0
\end{equation*}
As in the proof of Lemma \ref{minuecbpotplem} we can assume that $\sum_{j_1=1}^{n_1}\lambda_{j_1}=\sum_{j_2=1}^{n_2}\mu_{j_2}$.  Hence,  $U$ is of the form  \eqref{bBPOTus}.  To show the uniqueness of $\bu_1,\bu_2$ we use the same arguments as in the proof of part (b) of Lemma \ref{minuecbpotplem}.
\end{proof}

{\it Proof of Theorem \ref{dbcBPOTP}.}   Clearly, the function $\varphi(\x,\y,C)$ is concave in $\hat\rD$.   As $\log x$ is strictly concave on $\R_{++}$,  to show the strict concavity of $\varphi$ on $\rD$, it is enough to show that the affine map 
$T:\rD\to \R_{++}^{n_1\times n_2}$ given by $T(\x,\y)=C-\x\1_{n_1}^\top-\1_{n_2}\y^\top$  is one-to-one.
This is equivalent to the statement 
\begin{equation*}
(\1_{n_1}^\top \x-\1_{n_2}^\top\y=0)\wedge( \x\1_{n_2}^\top+\1_{n_1}\y^\top=0)\Rightarrow \x=\y=0.
\end{equation*}
This implication follows from the proof of part (b) of Lemma \ref{minuecbpotplem},
where we showed the uniqueness of the solution of \eqref{u1u2eq}.

Let  $\bu_1,\bu_2$ be defined in \eqref{bBPOTus}.  We claim that $(\bu_1,\bu_2)$ is a critical point of $\varphi$ in $\hat\rD$.  Indeed:
\begin{equation*}
\begin{aligned}
&\frac{\partial \varphi}{\partial x_{j_1}}(\bu_1,\bu_2)=p_{j_1,1}-\sum_{j_2=1}^{n_2}\frac{\varepsilon}{c_{j_1,j_2} -u_{j_1,1}-u_{j_2,2}}=0, j_1\in[n_1],\\
&\frac{\partial \varphi}{\partial y_{j_2}}(\bu_1,\bu_2)=p_{j_2,2}-\sum_{j_1=1}^{n_2}\frac{\varepsilon}{c_{j_1,j_2} -u_{j_1,1}-u_{j_2,2}}=0, j_2\in[n_1].
\end{aligned}
\end{equation*}
Hence,  $(\bu_1,\bu_2)$ is a maximal point in $\hat\rD$.  As $\1_{n_1}^\top\bu_1=\1_{n_2}^\top \bu_2$, it follows that $(\bu_1,\bu_2)\in\rD$.  As $\varphi$ is strictly concave in $\rD$, it follows that $(\bu_1,\bu_2)$ is the unique maximal point in $\rD$.  Let $(\x^\star,\y^\star)\in \hat\rD\setminus \rD$ be a maximum point in $\hat\rD$.  Then $(\x^\star,\y^\star)+t(\1_{n_1},-\1_{n_2})$ is also a maximal point in $\hat\rD$.  There exists a unique $t^\star$ such that $(\x^\star,\y^\star)+t^\star(\1_{n_1},-\1_{n_2})\in\rD$.  Hence, $(\x^\star,\y^\star)+t^\star(\1_{n_1},-\1_{n_2})=(\bu_1,\bu_2)$.  Therefore,  maximal points of $\varphi$ in $\hat\rD$ are given by \eqref{defLBP}. 

It is left to show the equality \eqref{maxfBOTP}.   Observe first
\begin{equation*}
\begin{aligned}
&\tau_{\beta}(C,P,\varepsilon)=\tr C^\top U +\varepsilon\beta(U)=
\sum_{j_1=j_2=1}^{n_1,n_2}\frac{c_{j_1,j_2}\varepsilon}{c_{j_1,j_2}-u_{j_1,1}-u_{j_2,2}} +\varepsilon\beta(U)=\\
&\varepsilon\left(\sum_{j_1=j_2=1}^{n_1,n_2}\frac{c_{j_1,j_2}-u_{j_1,1}-u_{j_2,2}+(u_{j_1,1}+u_{j_2,2})}{c_{j_1,j_2}-u_{j_1,1}-u_{j_2,2}} \right)+\varepsilon\beta(U)=\\
&n_1n_2\varepsilon+\left(\sum_{j_1=j_2=1}^{n_1,n_2}(u_{j_1,1}+u_{j_2,2})\frac{\varepsilon}{c_{j_1,j_2}-u_{j_1,1}-u_{j_2,2}} \right)+\varepsilon\beta(U)\\
&n_1n_2\varepsilon+\bp_1^\top\bu_1+\bp_2^\top\bu_2-\varepsilon
\sum_{j_1=j_2=1}^{n_1,n_2} \log \frac{\varepsilon}{c_{j_1,j_2}-u_{j_1,1}-u_{j_2,2}}=\\
&\bp_1^\top\bu_1+\bp_2^\top\bu_2+\varepsilon\left(
\sum_{j_1=j_2=1}^{n_1,n_2} \log (c_{j_1,j_2}-u_{j_1,1}-u_{j_2,2})\right)
+\varepsilon n_1n_2(1-\log \varepsilon).
\end{aligned}
\end{equation*}
Observe second
\begin{equation*}
\begin{aligned}
&\max_{(\x,\y)\in\hat\rD}\varphi(\x,\y,C)=\varphi(\bu_1,\bu_2,C)=\\
&\bp_1^\top\bu_1+\bp_2^\top\bu_2+\varepsilon \left(\sum_{j_1=j_2=1}^{n_1,n_2}  \log(c_{j_1,j_2}-u_{j_1,1}-u_{j_2,2}) \right)+\varepsilon n_1n_2(1-\log\varepsilon).
\end{aligned}
\end{equation*}
Hence, the equality \eqref{maxfBOTP}.
\qed
\subsection{The interior point method for finding a barrier approximation of $\tau(C,P)$}\label{subsec:ipmcBP}
In this section we discuss how to apply the IPM method to find an approximation to $\tau(C,P)$ using a modified barrier $\hat\beta$.  A short discussion of the IPM method is given in Appendix \ref{sec:ipm}.  The problem in applying the IPM to the 
\begin{equation}\label{min=maxcBP}
\max_{(\x,\y)\in\rD}\varphi(\x,\y,C)=-\min_{(\x,\y)\in\rD}-\varphi(\x,\y,C)
\end{equation}
is that $\rD$ is unbounded.   We need to restrict the  maximum problem to a bounded domain.   

Let
\begin{equation*}
\begin{aligned}
&\langle \x,\y\rangle=\x^*\y,  \quad \x,\y\in\C^n,\\
&\|\x\|_p=(\sum_{i=1}^n |x_i|^p)^{1/p},  p\in[1,\infty],\quad \x=(x_1,\ldots,x_n)^\top\in\C^n,\\
&\|\x\|=\|\x\|_2,\\
&\langle A,B\rangle=\tr A^*B, \quad \|A\|_F=\sqrt{\langle A,A\rangle},  \quad A,B\in\C^{m\times n}.
\end{aligned}
\end{equation*}
We start with the following lemmas
\begin{lemma}\label{x1xdqfin} Let $2\le d$ be an integer, and $\x_i=(x_{1,i},\ldots,x_{n_i})^\top\in\R^{n_i}, i\in[d]$.  Assume that 
\begin{equation}\label{balancecond}
\1_{n_i}^\top\x_i-\1_{n_{i+1}}^\top\x_{i+1}=0 \textrm{ for }i\in[d-1].
\end{equation}
Then
\begin{equation}\label{sumxjiin}
\begin{aligned}
&\sum_{i=1}^d \|\x_i\|^2\le \frac{\max_{i\in[d]}n_i}{n_1\cdots n_d}
\sum_{j_1=\cdots=j_d=1}^{n_1,\cdots,n_d}(x_{j_1,1}+\cdots+x_{j_d,d})^2\le\\ 
&(\max_{i\in[d]}n_i)\sum_{i=1}^d \frac{d}{n_i}\|\x_i\|^2
\end{aligned}
\end{equation}
\end{lemma}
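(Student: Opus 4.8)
The plan is to expand the middle sum and recognize it as a quadratic form whose cross terms vanish under the balance conditions \eqref{balancecond}. Writing $S = \sum_{j_1=\cdots=j_d=1}^{n_1,\ldots,n_d}(x_{j_1,1}+\cdots+x_{j_d,d})^2$, I would expand the square to get $\sum_i \sum_{k} (\text{terms})$. The diagonal terms give, for each fixed $i$, a contribution $\big(\prod_{\ell\neq i} n_\ell\big)\sum_{j_i} x_{j_i,i}^2 = \frac{n_1\cdots n_d}{n_i}\|\x_i\|^2$, since summing over all indices $j_\ell$ with $\ell\neq i$ just multiplies by $\prod_{\ell\neq i}n_\ell$. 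The cross terms, for $i\neq k$, give $\big(\prod_{\ell\neq i,k}n_\ell\big)\big(\sum_{j_i}x_{j_i,i}\big)\big(\sum_{j_k}x_{j_k,k}\big) = \big(\prod_{\ell\neq i,k}n_\ell\big)(\1_{n_i}^\top\x_i)(\1_{n_k}^\top\x_k)$. The key observation is that \eqref{balancecond} forces $\1_{n_1}^\top\x_1 = \1_{n_2}^\top\x_2 = \cdots = \1_{n_d}^\top\x_d$; call this common value $s$. Then every cross term is nonnegative (equal to $\big(\prod_{\ell\neq i,k}n_\ell\big)s^2$), so $S \ge \sum_{i=1}^d \frac{n_1\cdots n_d}{n_i}\|\x_i\|^2$, and since $\frac{n_1\cdots n_d}{n_i}\ge \frac{n_1\cdots n_d}{\max_i n_i}$ for each $i$, the leftmost inequality in \eqref{sumxjiin} follows.

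For the upper bound I would use the elementary inequality $(a_1+\cdots+a_d)^2 \le d(a_1^2+\cdots+a_d^2)$ termwise before summing, with $a_i = x_{j_i,i}$. This gives $S \le d\sum_{j_1,\ldots,j_d}\sum_{i=1}^d x_{j_i,i}^2 = d\sum_{i=1}^d \frac{n_1\cdots n_d}{n_i}\|\x_i\|^2$, which is the third quantity in \eqref{sumxjiin}. The final inequality $\sum_{i=1}^d \frac{dn_1\cdots n_d}{n_i}\|\x_i\|^2 \le \frac{dn_1\cdots n_d}{\max_{i\in[d]}n_i}\sum_{i=1}^d\|\x_i\|^2$ is again just $\frac{1}{n_i}\le\frac{1}{\max_i n_i}$ applied term by term. (Strictly one should note this last step actually requires $n_i \ge \max_j n_j$, which is false unless all $n_i$ are equal — so the intended reading must be $\frac{1}{n_i} \le \frac{1}{\min_i n_i}$; I would double-check the direction here, as the natural clean bound replaces $\max$ by $\min$ in the denominator. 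Assuming the paper's convention is as written, the chain holds as stated only when the $n_i$ are balanced, or one interprets the outer bound loosely.)

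The only genuine content is the cancellation of cross terms, and the main (mild) obstacle is bookkeeping: keeping track of which product of dimensions multiplies each term when one or two mode-indices are "frozen" and the rest are summed freely. There is no analytic difficulty — everything reduces to the identity $\sum_{j_\ell \in [n_\ell], \ell \in T} 1 = \prod_{\ell \in T} n_\ell$ and the two scalar inequalities $(\sum a_i)^2 \le d\sum a_i^2$ and $1/n_i \le 1/\min_i n_i$, together with the crucial reduction of the $(d-1)$ balance equations to the single statement that all the coordinate sums $\1_{n_i}^\top \x_i$ coincide. I would present it in exactly that order: reduce the balance condition, expand $S$, split into diagonal and cross parts, bound each.
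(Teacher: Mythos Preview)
Your approach is essentially identical to the paper's: expand the square, observe that the cross terms equal $\big(\prod_{\ell\ne i,k}n_\ell\big)s^2\ge 0$ under the balance condition to get the lower bound, and apply $(a_1+\cdots+a_d)^2\le d\sum a_i^2$ termwise for the upper bound. Your doubt about the last inequality is well founded: as written it requires $1/n_i\le 1/\max_i n_i$, which fails unless all $n_i$ coincide; the paper's own proof does not address this step, and in the only place the lemma is invoked (Lemma~\ref{maxu1u2est}(c)) only the lower bound is used, so the defect is in the statement rather than in your argument.
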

\begin{proof}  
Clearly,
\begin{equation}
\begin{aligned}
&\sum_{j_1=\cdots=j_d=1}^{n_1,\cdots,n_d}(x_{j_1,1}+\cdots+x_{j_d,d})^2=\\
&\sum_{i=1}^d \frac{n_1\cdots n_d}{n_i}\|\x_i\|^2+2\sum_{1\le p <q\le d}\frac{n_1\cdots n_d}{n_pn_q}(\1_{n_p}^\top\x_{n_p})(\1_{n_q}^\top\x_{n_q})
\end{aligned}
\end{equation}
As $\1_{n_p}^\top\x_{n_p}=\1_{n_q}^\top\x_{n_q}$ we deduce that $(\1_{n_p}^\top\x_{n_p})(\1_{n_q}^\top\x_{n_q})\ge 0$.  Hence,  the first inequality in \eqref{sumxjiin} holds. To deduce the second inequality use the Cauchy-Schwarz inequality: 
$$(x_{j_1,1}+\cdots+x_{j_d,d})^2\le d(x_{j_1,1}^2+\cdots+x_{j_d,d}^2).$$
\end{proof}
\begin{lemma}\label{maxu1u2est}  Let $2\le n_1,n_2\in\N$,  $0\ne C=[c_{j_1,j_2}]\in\R^{n_1\times n_2}$, and $D$ be given by \eqref{DmfbBOTP}.   Denote
\begin{equation}\label{cminmax}
\begin{aligned}
&c_{\min}=\min_{j_1\in[n_1],j_2\in[n_2]}c_{j_1,j_2},\quad  c_{\max}=\max_{j_1\in[n_1],j_2\in[n_2]}c_{j_1,j_2},\\
&c=\max_{j_1\in[n_1],j_2\in[n_2]}|c_{j_1,j_2}|.
\end{aligned}
\end{equation}
\begin{enumerate}[(a)]
\item Consider the minimum problem \eqref{TOTmat}.   
Then
\begin{equation}\label{lbtau}
 \tau(C,P)\ge c_{\min}.
\end{equation}
\item
The dual maximum problem to \eqref{TOTmat} is given by \eqref{dBOTP}.    
\item  Let $\bp_1,\bp_2$ be positive probability vectors. Assume that $C(\x,\y)$ is defined in \eqref{DmfbBOTP}, and suppose that
\begin{equation}\label{xylbin}
(C(\x,\y)\ge 0)\wedge(\bp_1^\top\x+\bp_2^\top\y\ge c_{\min})\wedge(\1_{n_1}^\top\x=\1_{n_2}^\top\y).
\end{equation}
Then
\begin{equation}\label{u1u2ineq}
\begin{aligned}
&0\le c_{j_1,j_2}-x_{j_1}-y_{j_2}<\frac{c_{\max}-c_{\min}}{p_{j_1,1}p_{j_2,2}} \le\\ &\frac{2c}{p_{j_1,1}p_{j_2,2}}\le\frac{2c}{(\min_{j_1\in[n_1]}p_{j_1,1})(\min_{j_2\in[n_2]}p_{j_2,2})},\\
&\sum_{j_1=j_2=1}^{n_1,n_2} (x_{j_1}+y_{j_2})^2<n_1n_2\left(\frac{2c}{(\min_{j_1\in[n_1]}p_{j_1,1})(\min_{j_2\in[n_2]}p_{j_2,2})}\right)^2,\\
&\|(\x,\y)\|_2=\sqrt{\|\bx\|^2+\|\y\|^2}<\\
&\sqrt{\max(n_1,n_2)}\frac{2c}{(\min_{j_1\in[n_1]}p_{j_1,1})(\min_{j_2\in[n_2]}p_{j_2,2})}.
\end{aligned}
\end{equation}
\item Let
\begin{equation}\label{x0y0cBP}
\x_0=\frac{t}{n_1}\1_{n_1}, \y_0=\frac{t}{n_2}\1_{n_2}, \quad t=\frac{n_1n_2(-1+c_{\min})}{n_1+n_2}.
\end{equation}
Then $(\x_0,\y_0)\in\rD$. 
\item  Let 
\begin{equation}\label{defrcBP}
r^2=\max(n_1,n_2)\frac{9c^2+1}{(\min_{j_1\in[n_1]}p_{j_1,1})^2(\min_{j_2\in[n_2]p_{j_2,12})^2})}.
\end{equation}
The interior of the ball
$\rB((\x_0,\y_0),r)=\{\|\x-\x_0\|^2 +\|\y-\y_0\|^2\le  r^2\},$
denoted as $\rB_o((\x_0,\y_0),r)$, contains all $(\x,\y)$ that satisfies the conditoins  \eqref{xylbin}.
\item Let $\rD_1=\rD\cap   \rB_o((\x_0,\y_0),r)$.  Then for any boundary point $(\x,\y)$ of $\rD_1$ the inequality $\|(\x_0,\y_0)-(\x,\y)\|\ge \frac{1}{\sqrt{2}}$ holds.
\item The function
\begin{equation}\label{hbcBT}
\begin{aligned}
&\hat\beta(\x,\y)=-\sum_{j_1=j_2}^{n_1,n_2} \log(c_{j_1,j_2}-x_{i_1}-y_{j_1})\\
&-\log\left(r^2-\|\x-\x_0\|^2-\|\y-\y_0\|^2\right)+\log r^2
\end{aligned}
\end{equation}
is a barrier function for the convex domain $\rD_1$ with $\theta(\hat\beta)\le n_1n_2+1$.
\end{enumerate}
\end{lemma}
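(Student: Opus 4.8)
The plan is to establish the seven items in order; (a), (b) and (d) are immediate, item (c) carries the analytic weight, and (e)--(g) will follow from (c) together with standard facts about self-concordant barriers. For (a): any $V\in\rV(P)$ has nonnegative entries summing to $1$, so $\tr C^\top V=\sum_{j_1,j_2}c_{j_1,j_2}v_{j_1,j_2}\ge c_{\min}\sum_{j_1,j_2}v_{j_1,j_2}=c_{\min}$ for every feasible $V$, which is \eqref{lbtau}. For (b): \eqref{dBOTP} is just the linear programming dual of \eqref{TOTmat}; both programs are feasible (take $V=\bp_1\bp_2^\top$, resp.\ $(\x,\y)=(-M\1_{n_1},-M\1_{n_2})$ with $M$ large), so strong duality applies, and replacing $\{C(\x,\y)\ge 0\}$ by $\hat\rD$ and then by $\rD$ is legitimate because the objective $\bp_1^\top\x+\bp_2^\top\y$ and the open constraint $C(\x,\y)>0$ are both invariant under $(\x,\y)\mapsto(\x+t\1_{n_1},\y-t\1_{n_2})$, each such line meeting $\rD$ exactly once by the injectivity of the affine map shown in the proof of Theorem \ref{dbcBPOTP}. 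For (d): by the choice of $t$, $x_{0,j_1}+y_{0,j_2}=t(n_1^{-1}+n_2^{-1})=c_{\min}-1$, so $c_{j_1,j_2}-x_{0,j_1}-y_{0,j_2}=c_{j_1,j_2}-c_{\min}+1\ge 1>0$ and $\1_{n_1}^\top\x_0=t=\1_{n_2}^\top\y_0$, whence $(\x_0,\y_0)\in\rD$.

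Item (c) is where the real work lies. Write $p_{\min,i}=\min_j p_{j,i}$, noting $p_{\min,i}\le 1/n_i\le 1/2$. From $C(\x,\y)\ge 0$ we have $x_{j_1}+y_{j_2}\le c_{j_1,j_2}\le c$; averaging over $j_2$ against $\bp_2$ gives $x_{j_1}\le\sum_{j_2}p_{j_2,2}c_{j_1,j_2}-\bp_2^\top\y\le c-\bp_2^\top\y$, and averaging this against $\bp_1$ gives $\bp_1^\top\x+\bp_2^\top\y\le c$. Isolating one row, $\bp_1^\top\x\le p_{j_1,1}x_{j_1}+(1-p_{j_1,1})(c-\bp_2^\top\y)$, which combined with $\bp_1^\top\x+\bp_2^\top\y\ge c_{\min}\ge -c$ yields $x_{j_1}\ge c-\bp_2^\top\y-2c/p_{\min,1}$, and symmetrically $y_{j_2}\ge c-\bp_1^\top\x-2c/p_{\min,2}$. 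Adding these lower bounds and using $\bp_1^\top\x+\bp_2^\top\y\le c$, against the trivial upper bound $x_{j_1}+y_{j_2}\le c$, gives $x_{j_1}+y_{j_2}\in[\,c-2c/p_{\min,1}-2c/p_{\min,2},\,c\,]$; since $p_{\min,1}+p_{\min,2}\le 1$ one checks $2c/p_{\min,1}+2c/p_{\min,2}-c<2c/(p_{\min,1}p_{\min,2})$, so $|x_{j_1}+y_{j_2}|<2c/(p_{\min,1}p_{\min,2})$ for every pair, which is the first line of \eqref{u1u2ineq} after squaring and summing. The second line then follows from Lemma \ref{x1xdqfin} with $d=2$, $\x_1=\x$, $\x_2=\y$, whose hypothesis \eqref{balancecond} is precisely the balance constraint in \eqref{xylbin}.

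For (e): by (c), $\|(\x,\y)\|^2<\max(n_1,n_2)\bigl(2c/(p_{\min,1}p_{\min,2})\bigr)^2$, while $\|(\x_0,\y_0)\|^2=t^2(n_1^{-1}+n_2^{-1})=n_1n_2(c_{\min}-1)^2/(n_1+n_2)\le\min(n_1,n_2)(c+1)^2$; substituting into $\|(\x,\y)-(\x_0,\y_0)\|^2\le 2\|(\x,\y)\|^2+2\|(\x_0,\y_0)\|^2$ and using $(p_{\min,1}p_{\min,2})^{-2}\ge 16$ yields $\|(\x,\y)-(\x_0,\y_0)\|^2<\max(n_1,n_2)(9c^2+1)/(p_{\min,1}^2p_{\min,2}^2)=r^2$. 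For (f): a boundary point $(\x,\y)$ of $\rD_1=\rD\cap\rB_o((\x_0,\y_0),r)$ either lies on the sphere of radius $r$, and then the distance equals $r\ge 1/\sqrt2$ (indeed $r^2\ge 32$), or it lies on $\partial\hat\rD$, i.e.\ $x_{j_1}+y_{j_2}=c_{j_1,j_2}$ for some pair while $C(\x,\y)\ge 0$; then $(x_{j_1}-x_{0,j_1})+(y_{j_2}-y_{0,j_2})=c_{j_1,j_2}-x_{0,j_1}-y_{0,j_2}=c_{j_1,j_2}-c_{\min}+1\ge 1$, so $\|(\x,\y)-(\x_0,\y_0)\|^2\ge(x_{j_1}-x_{0,j_1})^2+(y_{j_2}-y_{0,j_2})^2\ge\tfrac12\bigl((x_{j_1}-x_{0,j_1})+(y_{j_2}-y_{0,j_2})\bigr)^2\ge\tfrac12$.

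Finally, (g) is a self-concordance calculation: $\beta_C$ is the logarithmic barrier of the polyhedron $\hat\rD$, cut out by the $n_1n_2$ affine inequalities $c_{j_1,j_2}-x_{j_1}-y_{j_2}>0$, hence self-concordant with parameter $n_1n_2$; the term $-\log(r^2-\|\x-\x_0\|^2-\|\y-\y_0\|^2)$ is the standard self-concordant barrier of the Euclidean ball, with parameter $1$ (a direct computation gives $\langle\nabla g,(\nabla^2 g)^{-1}\nabla g\rangle=2\|z\|^2/(r^2+\|z\|^2)<1$ with $z=(\x-\x_0,\y-\y_0)$); restriction to the hyperplane $\{\1_{n_1}^\top\x=\1_{n_2}^\top\y\}$ does not increase either parameter, the parameter of a sum of barriers is the sum of the parameters, the added constant $\log r^2$ is harmless, and $\hat\beta\to+\infty$ on $\partial\rD_1$ because one of its two groups of terms blows up there; hence $\theta(\hat\beta)\le n_1n_2+1$. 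The step I expect to be the real obstacle is (c): one must average against the marginals in the right order to get a clean two-sided coordinate estimate and then feed it into Lemma \ref{x1xdqfin} through the balance constraint; the remaining items are essentially bookkeeping, although confirming that the explicit constants in $r$ (item (e)) and in $\theta(\hat\beta)$ (item (g)) come out exactly as stated takes some care.
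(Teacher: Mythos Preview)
Your proof is correct and follows essentially the same architecture as the paper's: (a), (b), (d) are dispatched immediately, the pointwise bound $|x_{j_1}+y_{j_2}|<2c/(p_{\min,1}p_{\min,2})$ in (c) is the core estimate which then feeds Lemma~\ref{x1xdqfin}, and (e)--(g) follow by routine bookkeeping with the ball barrier.

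The only noteworthy difference is in the derivation of the lower bound on $x_{j_1}+y_{j_2}$ in (c). The paper writes the objective as $\sum_{j_1,j_2}p_{j_1,1}p_{j_2,2}(x_{j_1}+y_{j_2})$, isolates a single term $(p,q)$, and bounds the remaining terms by $c_{j_1,j_2}$ to get $x_p+y_q\ge (c_{\min}-\tr C^\top(\bp_1\bp_2^\top))/(p_{p,1}p_{q,2})+c_{p,q}$. You instead average over one index at a time, first bounding each $x_{j_1}$ from below via the row inequality, then symmetrically each $y_{j_2}$, and add. Your route makes the strict inequality (needed because $C\ne 0$) slightly more transparent through the clean algebraic check $2(p_{\min,1}+p_{\min,2})<2+p_{\min,1}p_{\min,2}$, whereas the paper's implication to $|x_p+y_q|<2c/(p_{\min,1}p_{\min,2})$ requires a moment's thought about when the crude bounds $c_{\min}\ge -c$, $\tr C^\top(\bp_1\bp_2^\top)\le c$ can be simultaneously tight. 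Both arguments are valid and yield the same constant.
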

\begin{proof}
(a)  Denote 
\begin{equation}\label{defDelbn}
\Delta^{\bn}=\{\cZ=[z_{j_1,\ldots,j_d}]\in (\R^{n_1}\times \cdots\times\R^{n_d})_+, \sum_{j_1=\cdots=j_d=1}^{n_1,\ldots,n_d}z_{j_1,\ldots,j_d}=1\}.
\end{equation} 
Recall that the extreme points of $\Delta^{\bn}$ are tensors which have one entry equal to one, and all other entries are zero.  Clearly,  $\rV(P)\subset \Delta^{(n_1,n_2)}$.
Hence $\tau(C,P)\ge \min_{V\in\Delta^{(n_1,n_2})}\tr C^\top V=c_{\min}$.

\noindent
(b) Consider the following LP problem:
\begin{equation*}
\min_{A\z=\bb, \z\ge \0} \bc^\top \z,  \quad \z,\bc\in\R^n, \bb\in\R^m, A\in\R^{m\times n}.
\end{equation*}
Suppose that $A\z=\bb$ is a bounded nonempty set. Then the set $\bc-A^\top \bw\ge 0$ is nonempty,  and duality holds \cite{CCPS}:
\begin{equation*}
\max_{\bw\in\R^m,\bc-A^\top\bw\ge\0}\bb^\top\bw=\bb^\top\bw^\star=\min_{A\z=\bb, \z\ge \0} \bc^\top \z.
\end{equation*}
Consider the minimum problem \eqref{TOTmat}.   Then $\bb=(\bp_1,\bp_2),\bw=(\x,\y)$  viewed as column vectors in $\R^{n_1+n_2}$.   Thus, $\bb^\top\bw=\bp_1^\top \x+\bp_2^\top \y$.  Observe first  that the condition  $\bc-A^\top\bw\ge 0$ is the condition $C-\x\1_{n_2}^\top-\1_{n_1}\y^\top\ge 0$. 
To show that,  observe the following implications
\begin{equation}\label{complcond}
\begin{aligned}
&(C-\x\1_{n_2}^\top-\1_{n_1}\y^\top\ge 0)\wedge(V\in \rV(P))\Rightarrow\\
&\tr (C-\x\1_{n_2}^\top-\1_{n_1}\y^\top)^\top V\ge 0\Rightarrow
\\
&\tr C^\top V\ge \x^\top\bp_1 +\bp_2^\top \y. 
\end{aligned}
\end{equation}
Furthermore, the duality yields first equality line of \eqref{dBOTP}.  Therefore, the second  equality of \eqref{dBOTP} holds.  Without loss of generality we can replace $(\x,\y)$ by $(\x+t\1_{n_1},\y-t\1_{n_2})$.  Hence,  the third equality of \eqref{dBOTP} holds.  
Furthermore, we can assume that there is a maximum solution $(\bu_1,\bu_2)$  to  \eqref{dBOTP} such that $\1_{n_1}^\top\bu_1=1_{n_2}^\top\bu_2$.

\noindent
(c)  Assume the conditions \eqref{xylbin}.   We have the following implications:
\begin{equation*}
\begin{aligned}
&0\le C(\x,\y)\Rightarrow x_{j_1}+y_{j_2}\le c_{j_1,j_2} \le c_{\max} \textrm{ for all } (j_1,j_2)\in[n_1]\times[n_2],\\
&c_{\min}\le \bp_1^\top \x+  \bp_2^\top \y=\sum_{j_1=j_2=1}^{n_1,n_2} p_{j_1,1}p_{j_2,2}(x_{j_1}+y_{j_2})\le\\
&p_{1,k}p_{2,l}(x_k+y_l)+\sum_{(j_1,j_2)\in[n_1]\times [n_2]\setminus\{(k,l)\}}p_{j_1,1}p_{j_2,2} c_{j_1,j_2}=\\
&p_{1,k}p_{2,l}(x_p+y_q)-p_{1,k}p_{2,l}c_{p,q}+\tr C^\top(\bp_1\bp_2^\top)\Rightarrow\\
&0\le c_{k,l}- x_p-y_q\le \frac{\tr C^\top(\bp_1\bp_2^\top)-c_{\min}}{p_{1,k}p_{2,l}}\le\frac{c_{\max}-c_{min}}{p_{1,k}p_{2,l}}\le\frac{2c}{p_{1,k}p_{2,l}}.
\end{aligned}
\end{equation*}
The last inequality yields the first inequalities in \eqref{u1u2ineq}. Next observe that the above inequalities yield:

\begin{equation*}
\begin{aligned}
&-\frac{2c}{p_{1,k}p_{2,l}}<
\frac{c_{\min}-\tr C^\top(\bp_1\bp_2^\top)+p_{1,k}p_{2,l}c_{k,l}}{p_{1,k}p_{2,l}}\le x_k+y_l\le c_{k,l}\Rightarrow\\
&|x_p+y_q|<\frac{2c}{(\min_{j_1\in[n_1]} p_{j_1,1})(\min_{j_2\in[n_2]}p_{j_2,2})}\forall (p,q)\in[n_1]\times[n_2]\Rightarrow\\
&\sum_{j_1=j_2=1}^{n_1,n_2} (x_{j_1}+y_{j_2})^2< n_1n_2\left(\frac{2c}{(\min_{j_1\in[n_1]} (p_{j_1,1})(\min_{j_2\in[n_2]}p_{j_2,2})}\right)^2.
\end{aligned}
\end{equation*}
The last inequality implies the first inequality of \eqref{u1u2ineq}.   Use Lemma \ref{x1xdqfin} to deduce the second inequality of  \eqref{u1u2ineq}.

\noindent
(d) is straightforward.

\noindent
(e) Assume that $(\x,\y)$ that satisfies the conditoins  \eqref{xylbin}. Observe 
\begin{equation*}
\begin{aligned}
&\|\x-\x_0\|^2+\|\y-\y_0\|^2\le (\|\x\|+\|\x_0\|)^2+(\|\y\|+\|\y_0\|)^2<\\ 
&2(\|\x\|^2+\|\x_0\|^2 +\|\y\|^2 +\|\y_0\|^2)<\frac{2n_1n_2}{n_1+n_2}(-1+c_{\min})^2+\\
&\max(n_1,n_2)\frac{8c^2}{(\min_{j_1\in[n_1]}p_{j_1,1})^2(\min_{j_2\in[n_2]}p_{j_2,2})^2}<\\
&\max(n_1,n_2)\frac{9c^2+1}{(\min_{j_1\in[n_1]}p_{j_1,1})^2(\min_{j_2\in[n_2]}p_{j_2,2})^2} =r^2.
\end{aligned}
\end{equation*}
Hence, $(\x,\y)\in\rB_o((\x_0,\y_0),r)$. 

\noindent
(f) Suppose that $(\x,\y)$ is a boundary point of $\rD_1$.  As $r>1$ it is enough to consider a boundary point of $\rD_1$ which is a boundary point of $D$.  This means that $c_{j_1,j_2}=x_{j_1}+y_{j_2}$ for some $j_1\in [n_1], j_2\in[n_2]$. Clearly,
\begin{equation*}
\begin{aligned}
&2(\|\x-\x_0\|^2+\|\y-\y_0\|^2)\ge 2(|x_{j_1}-t/n_1|^2+|y_{j_2}-t/n_2|^2)\ge\\ 
&(|x_{j_1}-t/n_1|+|y_{j_2}-t/n_2|)^2\ge 
|x_{j_1}+y_{j_2}-\frac{(n_1+n_2)t}{n_1n_2}|^2=\\
&|c_{j_1,j_2}-c_{\min}+1|^2\ge 1.
\end{aligned}
\end{equation*}

\noindent
(g) Clearly,  $\hat\beta$ is a barrier function for $\rD_1$.  Since the complexity  parameter of each term in $\hat\beta$ is bounded above by $1$ we deduce that $\theta(\hat\beta)\le n_1n_2+1$.

\end{proof}

\begin{theorem}\label{IPMcBP}
Let $\bp_i\in\Delta^{n_i},i\in[2]$ be positive probability vectors.  Consider the minimum \eqref{TOTmat}, which corresponds to the maximum dual problem \eqref{dBOTP}.   Apply the IPM short step interior path algorithm with the barrier \eqref{hbcBT} to the $-\min$ problem as in \eqref{min=maxcBP}, where $r$ is given by
\eqref{defrcBP}.   Let $c=\max_{j_1\in[n_1],j_2\in[n_2]}|c_{j_1,j_2}|$. Choose a  starting point  $(\x_0,\y_0)$ given by \eqref{x0y0cBP}.
The CPA algorithm finds the value $\tau(C,P)$ within precision $\delta>0$ in  
\begin{equation}\label{ipmotmat1}
O\big(\sqrt{n_1n_2+1}\log\frac{r(n_1n_2+1)}{\delta}\big)
\end{equation}
iterations. 
For $n_1=n_2=n$ the arithmetic complexity of CPA is $O\big(n^4\log\frac{n^2r}{\delta}\big)$.
\end{theorem}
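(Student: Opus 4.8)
The plan is to reduce the statement to the standard complexity estimates for the short-step central-path (path-following) algorithm attached to a self-concordant barrier \cite{NN94,Ren01} (see Appendix~\ref{sec:ipm}), after observing that Lemma~\ref{maxu1u2est} already supplies the three ingredients such an estimate needs. The problem being solved is the dual LP \eqref{dBOTP}, whose value is $\tau(C,P)$ by part~(b); restricting it to the bounded domain $\rD_1=\rD\cap\rB_o((\x_0,\y_0),r)$ does not change that value, since $\tau(C,P)\ge c_{\min}$ (part~(a)) and every feasible point with $\bp_1^\top\x+\bp_2^\top\y\ge c_{\min}$ satisfies \eqref{xylbin} and hence lies in $\rB_o((\x_0,\y_0),r)$ (parts~(c) and~(e)). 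By part~(g), $\hat\beta$ from \eqref{hbcBT} is a self-concordant barrier for $\rD_1$ with parameter $\theta(\hat\beta)\le n_1n_2+1$; by part~(d) the point $(\x_0,\y_0)$ of \eqref{x0y0cBP} lies in $\rD$, and being the center of the ball it lies in $\rD_1$; and by part~(f) the boundary of $\rD_1$ stays at distance at least $1/\sqrt2$ from $(\x_0,\y_0)$.

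I would then run the central path for the linear objective $\bp_1^\top\x+\bp_2^\top\y$ on $\rD_1$ with barrier $\hat\beta$. At barrier weight $\varepsilon=1/t$ the central point maximizes $\bp_1^\top\x+\bp_2^\top\y-\varepsilon\hat\beta(\x,\y)$, which, up to the additive constant $n_1n_2\varepsilon(1-\log\varepsilon)$ and the ball-barrier term — the latter a lower-order perturbation because, by Theorem~\ref{dbcBPOTP} together with parts~(c),(e),(f), the maximizer $(\bu_1,\bu_2)$ of $\varphi$ sits strictly inside $\rB_o$ — agrees for small $\varepsilon$ with the maximizer of $\varphi(\x,\y,C)$ in \eqref{min=maxcBP}. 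The standard central-path gap bound gives $0\le\tau(C,P)-\big(\bp_1^\top\x(t)+\bp_2^\top\y(t)\big)\le\theta(\hat\beta)/t\le(n_1n_2+1)/t$ at the (approximately) central point for parameter $t$, so taking $t$ of order $(n_1n_2+1)/\delta$, i.e. $\varepsilon$ of order $\delta/(n_1n_2+1)$, produces the value $\tau(C,P)$ within $\delta$. The short-step scheme advances from an (approximately) central point for some $t_0$ to one for the final $t$ in $O\big(\sqrt{\theta(\hat\beta)}\,\log(t/t_0)\big)$ Newton steps; using part~(f) together with the standard objective-shift/two-phase initialization of \cite{NN94,Ren01} started from $(\x_0,\y_0)$ — for which $C(\x_0,\y_0)$ has all entries in $[1,2c+1]$ and the ball barrier is simultaneously at its minimum — one checks that $\log(1/t_0)=O\big(\log(r(n_1n_2+1))\big)$, where $r$ in \eqref{defrcBP} already absorbs the cost magnitude $c$. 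Hence $\log(t/t_0)=O\big(\log\frac{r(n_1n_2+1)}{\delta}\big)$, which is the iteration bound \eqref{ipmotmat1}.

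For the arithmetic complexity with $n_1=n_2=n$ I would bound the cost of a single Newton step. The variables lie in the hyperplane $\{\1_n^\top\x=\1_n^\top\y\}\subset\R^{2n}$. The barrier $\beta_C$ is a sum of $n^2$ terms $-\log(c_{j_1,j_2}-x_{j_1}-y_{j_2})$, so its gradient is assembled in $O(n^2)$ flops, and its Hessian is a bordered matrix whose blocks indexed by $(\x,\x)$ and $(\y,\y)$ are diagonal while the block indexed by $(\x,\y)$ is the full matrix $\big[(c_{j_1,j_2}-x_{j_1}-y_{j_2})^{-2}\big]$, again assembled in $O(n^2)$; the ball-barrier contributions cost $O(n)$. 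Solving the resulting $2n\times 2n$ Newton system (e.g. by a Schur complement) costs $O(n^3)$ and dominates a step. Since $\sqrt{\theta(\hat\beta)}=\sqrt{n^2+1}=O(n)$, the total is $O\big(n\cdot n^3\cdot\log\frac{n^2r}{\delta}\big)=O\big(n^4\log\frac{n^2r}{\delta}\big)$.

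The main obstacle is the initialization accounting in the second paragraph: a naive centering phase from $(\x_0,\y_0)$ to the analytic center of $\rD_1$ could be too expensive to keep within the claimed bound, so one must exploit that the starting point \eqref{x0y0cBP} makes $C(\x_0,\y_0)$ well-conditioned (all entries in $[1,2c+1]$, with the ball barrier at its minimum there) and feed it into the standard two-phase path-following initialization, thereby keeping the total at $O\big(\sqrt{n_1n_2+1}\,\log\frac{r(n_1n_2+1)}{\delta}\big)$ and verifying that the dependence on the domain radius $r$, the cost scale $c$, the barrier parameter $n_1n_2+1$ and the accuracy $\delta$ collapses into the single logarithmic factor. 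The self-concordance of $\hat\beta$ and the value of its parameter, the localization of the optimum inside $\rD_1$, and the per-step flop count are either already in hand from Lemma~\ref{maxu1u2est} or entirely routine.
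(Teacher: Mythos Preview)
Your proposal is correct and follows essentially the same route as the paper: invoke Lemma~\ref{maxu1u2est}~(a)--(g) to restrict to the bounded domain $\rD_1$, feed the barrier parameter $\theta(\hat\beta)\le n_1n_2+1$ and the inner/outer radius estimates (parts~(e),(f)) into Renegar's short-step complexity bound \eqref{Renthm}, and then count $O(n^3)$ flops per Newton step from the gradient/Hessian structure of $\hat\beta$. The paper is terser about initialization because \eqref{Renthm} already absorbs it through the symmetry parameter $\textrm{sym}((\x_0,\y_0),\rD_1)\ge 1/(\sqrt{2}\,r)$, so your ``main obstacle'' is in fact already handled by that single citation.
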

\begin{proof} Recall that the maximum \eqref{maxfBOTP} is achieved on $\rD_1$.  
Furthermore, the IPM method needs 
\begin{equation}\label{Renthm}
O\big(\sqrt{\theta(\hat\beta)}\log\big(\frac{\theta(\hat\beta)}{\delta \textrm{sym}((\x_0,\y_0),\rD_1)}\big)\big).
\end{equation}
Here,  $\textrm{sym}((\x_0,\y_0),\rD_1)$ is bounded below by the ratio of the minimum distance  and the maximum distance of $(\x_0,\y_0)$ to the boundary.   Part (e) of Lemma \ref{maxu1u2est} yields that $\textrm{sym}((\x_0,\y_0),\rD_1)\ge \sqrt{2}r$.
This shows that the number of iterations to find the value $\tau(C,P)$ within precision $\delta>0$ is given by \eqref{ipmotmat1}.

Assume that $n_1=n_2=n$.
Each step of the iteration of IPM needs the computation of the gradient and the Hessian of $\hat\beta$.    Clearly, $\frac{\partial \hat\beta}{\partial x_{j_1}}$ and  $\frac{\partial \hat\beta}{\partial y_{j_2}}$ have $n+1$ summands.    Therefore, the computation of $\nabla \hat\beta$ is $O(n^2)$.  The diagonal entry of the Hessian $H(f)$ has $n+2$ summands.  Hence, the computation of the diagonal entries of $H(f)$ needs $O(n^2)$ flops.  An off-diagonal entry of $H(f)$ has 2 summands.  Hence, the computation of the off-diagonal entries of $H(f)$ is $O(n^2)$.
To compute $H^{-1}(\hat\beta)(\nabla\beta-(\bp_1,\bp_2))$ we need $O(n^3)$ flops.
Thus, one iteration of the Newton methos needs $O(n^3)$ flops.  As IPM needs $O(n\log\frac{n^2r}{\delta})$ iteration, the arithmetic complexity of the IPM method is $O(n^4\log\frac{n^2r}{\delta})$.
\end{proof}
\subsection{Generalized Sinkhorn algorithm}\label{subsec:GSa}
The following proposition gives a generalization of the inequality \eqref{bctauin}:
\begin{proposition}\label{tauepsest}  Let $2\le n_1,n_2\in\N$, and $0\ne C=[c_{j_1,j_2}]\in\R^{n_1\times n_2}$.   Set $c=\max_{j_1\in[n_1],j_2\in[n_2]}|c_{j_1,j_2}|$.  Assume that $(\bp_1,\bp_2)\in \R^{n_1}\times \R^{n_2}$ be positive probablity vectors, and $\varepsilon>0$.  Then 
\begin{equation}\label{bbtauin}
\begin{aligned}
&\tau(C,P)<\tau_{\beta}(C,P, \varepsilon)<\tau(C,P) +n_1n_2\varepsilon(1-\log\varepsilon)+\\
&n_1n_2\varepsilon\log \left(\frac{2c}{(\min_{j_1\in[n_1]}p_{j_1,1})(\min_{j_2\in[n_2]}p_{j_2,2})}\right).
\end{aligned}
\end{equation}
\end{proposition}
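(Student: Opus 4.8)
Write the proof in two stages, following the template of Lemmas~\ref{minubcbpotplem} and~\ref{maxu1u2est} and the identity inside the proof of Theorem~\ref{dbcBPOTP}.

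\medskip

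\emph{Lower bound.} The inequality $\tau(C,P)<\tau_\beta(C,P,\varepsilon)$ is exactly part~(a) of Lemma~\ref{minubcbpotplem}: every entry of a matrix $V\in\rV_o(P)$ lies strictly between $0$ and $1$, hence $\beta(V)>0$ and $\tr C^\top V+\varepsilon\beta(V)>\tr C^\top V\ge\tau(C,P)$, and minimising the left-hand side over $\rV_o(P)$ preserves the strict inequality.

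\medskip

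\emph{Upper bound.} Rather than estimating the minimum \eqref{betBOTP} directly I would reuse the closed form produced inside the proof of Theorem~\ref{dbcBPOTP}. Let $(\bu_1,\bu_2)\in\rD$ be the unique maximiser of $\varphi(\cdot,\cdot,C)$ and $U=[u_{j_1,j_2}]$ the matrix of \eqref{bBPOTus}; that computation gives
\begin{equation*}
\tau_\beta(C,P,\varepsilon)=\bp_1^\top\bu_1+\bp_2^\top\bu_2+\varepsilon\sum_{j_1=j_2=1}^{n_1,n_2}\log\bigl(c_{j_1,j_2}-u_{j_1,1}-u_{j_2,2}\bigr)+n_1n_2\varepsilon(1-\log\varepsilon).
\end{equation*}
So it suffices to bound from above the linear term $\bp_1^\top\bu_1+\bp_2^\top\bu_2$ and the logarithmic term. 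The linear term is disposed of by weak LP duality: since $C(\bu_1,\bu_2)=C-\bu_1\1_{n_2}^\top-\1_{n_1}\bu_2^\top>0$, the pair $(\bu_1,\bu_2)$ is admissible in the dual problem \eqref{dBOTP} (see the implications \eqref{complcond}), whence $\bp_1^\top\bu_1+\bp_2^\top\bu_2\le\tau(C,P)$.

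\medskip

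\emph{The logarithmic term.} Set $M=\frac{2c}{(\min_{j_1\in[n_1]}p_{j_1,1})(\min_{j_2\in[n_2]}p_{j_2,2})}$. The plan is to show $|u_{j_1,1}+u_{j_2,2}|<M$ for every $(j_1,j_2)$ by checking that $(\bu_1,\bu_2)$ satisfies the three hypotheses \eqref{xylbin} of Lemma~\ref{maxu1u2est}(c): it is balanced because $(\bu_1,\bu_2)\in\rD$; it satisfies $C(\bu_1,\bu_2)\ge 0$ because in fact $C(\bu_1,\bu_2)>0$; and $\bp_1^\top\bu_1+\bp_2^\top\bu_2\ge c_{\min}$ follows from the defining relation $u_{j_1,1}+u_{j_2,2}=c_{j_1,j_2}-\varepsilon/u_{j_1,j_2}$ of \eqref{bBPOTus} together with $\sum_{j_1,j_2}u_{j_1,j_2}=1$ (as $U\in\rV_o(P)$), which yield $\bp_1^\top\bu_1+\bp_2^\top\bu_2=\tr C^\top U-n_1n_2\varepsilon$, a quantity that is $\ge c_{\min}$ for the small $\varepsilon$ in which the relaxation is used (in general one carries the harmless $n_1n_2\varepsilon$ through the constant). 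Granting the claim, each factor obeys $0<c_{j_1,j_2}-u_{j_1,1}-u_{j_2,2}$ and, by the elementary inequality $(a-b)^2\le 2a^2+2b^2$,
\[
\bigl(c_{j_1,j_2}-u_{j_1,1}-u_{j_2,2}\bigr)^2\le 2c_{j_1,j_2}^2+2(u_{j_1,1}+u_{j_2,2})^2\le 2c^2+2M^2,
\]
so $\log(c_{j_1,j_2}-u_{j_1,1}-u_{j_2,2})=\tfrac12\log\bigl((c_{j_1,j_2}-u_{j_1,1}-u_{j_2,2})^2\bigr)\le\tfrac12\log(2c^2+2M^2)$. Summing over the $n_1n_2$ index pairs and substituting into the displayed identity (together with $\bp_1^\top\bu_1+\bp_2^\top\bu_2\le\tau(C,P)$) produces precisely the right-hand side of \eqref{bbtauin}.

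\medskip

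The one genuinely non-routine step is the verification that the barrier maximiser $(\bu_1,\bu_2)$ lands inside the region \eqref{xylbin} to which Lemma~\ref{maxu1u2est}(c) applies; everything else is bookkeeping (the reduction is the identity of Theorem~\ref{dbcBPOTP}, the linear term is weak duality, and the logarithmic term is $(a-b)^2\le 2a^2+2b^2$ combined with $\log t=\tfrac12\log t^2$). Once $|u_{j_1,1}+u_{j_2,2}|<M$ is established, the two chains of inequalities close and reproduce \eqref{bbtauin}.
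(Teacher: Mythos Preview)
Your argument follows the paper's proof essentially step for step: both invoke Lemma~\ref{minubcbpotplem}(a) for the left inequality, both plug in the closed form for $\tau_\beta(C,P,\varepsilon)$ obtained in the proof of Theorem~\ref{dbcBPOTP}, both dispose of the linear piece $\bp_1^\top\bu_1+\bp_2^\top\bu_2$ via the dual characterisation~\eqref{dBOTP}, and both control the logarithmic piece through the bound $|u_{j_1,1}+u_{j_2,2}|<M$ supplied by Lemma~\ref{maxu1u2est}(c). The only cosmetic difference is that the paper first applies the AM--GM inequality to $\prod_{j_1,j_2}(c_{j_1,j_2}-u_{j_1,1}-u_{j_2,2})^2$ and then bounds the resulting sum, whereas you bound each factor $(c_{j_1,j_2}-u_{j_1,1}-u_{j_2,2})^2\le 2c^2+2M^2$ directly; your route is a hair shorter and yields the identical constant.

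One remark on the step you flag as ``non-routine'': the paper does not actually verify the hypothesis $\bp_1^\top\bu_1+\bp_2^\top\bu_2\ge c_{\min}$ of~\eqref{xylbin} either---it simply cites the first inequality of~\eqref{u1u2ineq}. Your computation $\bp_1^\top\bu_1+\bp_2^\top\bu_2=\tr C^\top U-n_1n_2\varepsilon$ is correct, so what one really has is $\bp_1^\top\bu_1+\bp_2^\top\bu_2\ge c_{\min}-n_1n_2\varepsilon$; tracing this shift through the implications in the proof of Lemma~\ref{maxu1u2est}(c) produces an extra additive $n_1n_2\varepsilon/(\min p_{j_1,1}\min p_{j_2,2})$ inside the absolute value, which is harmless for the qualitative shape of~\eqref{bbtauin} but, strictly speaking, is absorbed neither in your write-up nor in the paper's.
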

\begin{proof}  The first inequality in \eqref{bbtauin} is stated in part (a) of Lemma \ref{minubcbpotplem}.
The equality \eqref{maxfBOTP} in
Theorem \ref{dbcBPOTP} yields that  
\begin{equation*}
\tau_{\beta}(C,P,\varepsilon)=
\bp_1^\top\bu_1+\bp_2^\top\bu_2-\varepsilon\beta_C(\bu_1,\bu_2) +n_1n_2\varepsilon(1-\log\varepsilon), 
\end{equation*}
where $(\bu_1,\bu_2)$ is a unique point in $\rD$.   The dual characterization of $\tau(C,P)$ given by \eqref{dBOTP} yields that $\bp_1^\top\bu_1+\bp_2^\top\bu_2\le \tau(C,P)$.  Use the first set of inequalities in \eqref{u1u2ineq} to deduce
\begin{equation*}
\begin{aligned}
&\tau_{\beta}(C,P)-\tau(C,P)- n_1n_2\varepsilon(1-\log\varepsilon)\le -\varepsilon \beta_C(\bu_1,\bu_2)=\\
&\varepsilon\log \prod_{j_1=j_2=1}^{n_1,n_2}(c_{j_1,j_2}-u_{j_1,1}-u_{j_2,2})\le \\
&\varepsilon n_1n_2\log
\left(\frac{2c}{(\min_{j_1\in[n_1]}p_{j_1,1})(\min_{j_2\in[n_2]}p_{j_2,2})}\right).
\end{aligned}
\end{equation*}
Finally,  we deduce the inequality
\begin{equation*}
\begin{aligned}
&\tau_{\beta}(C,P)-\tau(C,P)- n_1n_2\varepsilon(1-\log\varepsilon)<\\
&n_1n_2\varepsilon\log \left(\frac{2c}{(\min_{j_1\in[n_1]}p_{j_1,1})(\min_{j_2\in[n_2]}p_{j_2,2})}\right)
\end{aligned}
\end{equation*}
This proves the second inequality in \eqref{bbtauin}.
\end{proof}
In this subsection we need the following lemma:
\begin{lemma}\label{zfalem} Let $2\le n\in\N$ and assume that $\ba=(a_1,\ldots,a_n),a_1\ge\ldots\ge a_n>0$,  and $a_1>a_n$.   (The case $a_1=\cdots=a_n$ is trivial).  Define
\begin{equation}\label{deffba}
f(x, \ba)=\sum_{i=1}^n \frac{1}{x+a_i}, \quad x>-a_n.
\end{equation}
\begin{enumerate} [(a)]
\item The following equalities hold for $x>-a_n$:
\begin{equation}\label{f'f''}
\begin{aligned}
&f'(x,\ba)=-\sum_{i=1}^n\frac{1}{(x+a_i)^2},\\
&f''(x,\ba)=\sum_{i=1}^n\frac{2}{(x+a_i)^3}.
\end{aligned}
\end{equation}
The function $f$ is strictly convex, and $f,-f',f''$ are strictly decreasing from $\infty$ to $0$ on $(-a_n,\infty)$.
\item
Let $a>0$ be given and consider the equation $f(x)-a=0$.  It has a unique solution in $(-a_n,\infty)$, denoted as $x(a)$, which satisfies the following inequalities:
\begin{equation}\label{fxain}
\begin{aligned}
&x_0=\frac{1}{a}-a_n< x(a)< y_0=\frac{n}{a}-a_n,\\
&f(x_0)>0> f(y_0).
\end{aligned}
\end{equation}
\item The point $x(a)$ is the unique minimum of the function $g(x)=ax-\sum_{i=1}^n \log (x+a_i)$ for $x>-a_n$.
\item Consider the Newton and Regula Falsi iterations for $x(a)$:
\begin{equation}\label{NRFit}
\begin{aligned}
&x_{k+1}=x_{k}+\frac{a-f(x_{k})}{f'(x_{k})}, \\
&y_{k+1}=\frac{f(x_k)-a)y_k+(a-f(y_k))x_k}{f(x_k)-f(y_k)},
\end{aligned}
\end{equation}
for $k\ge 0$.  Then $x_{k}$ and $y_k$ are increasing and decreasing sequences that converge to $x(a)$.  Furthermore,  the sequence $\{x_k\}$ and $\{y_k\}$ converges quadratically to $x(a)$.
\item Consider the bisection algorithm for $k\ge 1$:
\begin{equation}\label{bacBP}
\begin{aligned}
&z_k=\frac{1}{2}(x_{k-1}+y_{k-1}),\\
&(x_k,y_k)=\begin{cases}
(z_k, y_{k-1}) \textrm{ if }  f(z_k)>a,\\
(x_{k-1}, z_k)  \textrm{ if }  f(z_k)<a,\\
(z_k,z_k) \textrm{ if } f(z_k)=a,  \textrm{ stop }.
\end{cases}
\end{aligned}
\end{equation}
Then 
\begin{equation}\label{fxkykcBPin}
\begin{aligned}
&0\le f(x_k)-f(y_k)\le -(y_k-x_k)f'(x_k)\le \frac{n(y_k-x_k)}{(x_k+a_n)^2}\le\\ &na^2(y_k-x_k)\le n a^22^{-k}(y_0-x_0)=n (n-1)a 2^{-k}.
\end{aligned}
\end{equation}
Let $\delta>0$ then  
\begin{equation}\label{delercBP}
|f(x_k)-a|,|f(y_k)-a|\le  \delta \textrm{ for }k=\lceil\frac{\log n(n-1)a}{\delta}\rceil.
\end{equation}
Hence, the arithmetic complexity the bisection algorithm for finding $x>-a_n$ such that $|f(x)-a|\le \delta$ is  $O(n\log \frac{na}{\delta})$.
\end{enumerate}
\end{lemma}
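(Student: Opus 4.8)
The plan is to treat the five parts essentially in order, since each part feeds into the next. Part (a) is pure calculus: differentiating \eqref{deffba} termwise gives the two formulas in \eqref{f'f''}, and since every summand $1/(x+a_i)$ is positive, strictly decreasing, convex, and tends to $0$ as $x\to\infty$ and to $\infty$ as $x\downarrow -a_n$ (the latter coming from the $i=n$ term), the same holds for the finite sum $f$; likewise $-f'$ and $f''$ are sums of positive decreasing terms. So $f$ is a strictly decreasing bijection $(-a_n,\infty)\to(0,\infty)$, which immediately gives in part (b) the existence and uniqueness of $x(a)$ solving $f(x)=a$. For the two-sided bound \eqref{fxain}: at $x_0=\frac1a-a_n$ one has $x_0+a_n=\frac1a$, and dropping all terms except $i=n$ in the sum gives $f(x_0)\ge \frac{1}{x_0+a_n}=a$; strictness holds because $a_1>a_n$ forces at least one more positive term, so $f(x_0)>a$, hence $x_0<x(a)$ by monotonicity. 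At $y_0=\frac na-a_n$ one has $y_0+a_i\ge y_0+a_n=\frac na$ for every $i$, so $f(y_0)\le n\cdot\frac{a}{n}=a$, and again $a_1>a_n$ makes the inequality strict, giving $x(a)<y_0$. Part (c) is then the observation that $g'(x)=a-f(x)$, so $g'(x)=0$ exactly at $x(a)$, and $g''=-f'>0$ makes $x(a)$ the unique minimum.

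For part (d) I would invoke standard facts about Newton's method and Regula Falsi applied to a smooth strictly convex decreasing function. Because $f$ is convex and decreasing on $(-a_n,\infty)$, the Newton iterate from any point below $x(a)$ stays below $x(a)$ (the tangent line at $x_k<x(a)$ lies above the graph, so its root $x_{k+1}$ satisfies $x_k<x_{k+1}<x(a)$), giving a monotone increasing sequence converging to $x(a)$; the quadratic rate is the usual $f\in C^2$, $f'(x(a))\ne 0$ estimate. Dually, for Regula Falsi with a fixed right endpoint, convexity forces the secant through $(x_k,f(x_k))$ and $(y_k,f(y_k))$ to cross the level $a$ at a point $y_{k+1}$ lying strictly between $x(a)$ and $y_k$, so $\{y_k\}$ decreases to $x(a)$; the quadratic convergence again follows from the standard secant/Regula-Falsi analysis for convex functions with one endpoint fixed. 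I would state these as citations to the standard numerical-analysis facts rather than reprove them.

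Part (e) is the only part requiring a genuine chained computation, and it is the main obstacle — not conceptually, but because the chain of five inequalities in \eqref{fxkykcBPin} must be assembled carefully. I would argue: (i) $0\le f(x_k)-f(y_k)$ since $f$ is decreasing and $x_k\le y_k$; (ii) $f(x_k)-f(y_k)\le -(y_k-x_k)f'(x_k)$ by convexity (the chord slope is at most the slope at the left endpoint in absolute value — equivalently the mean value theorem plus monotonicity of $f'$); (iii) $-f'(x_k)=\sum 1/(x_k+a_i)^3\le n/(x_k+a_n)^3\le n/(x_k+a_n)^2$ once $x_k+a_n\ge 1$ — but in fact I should route through $x_k+a_n\ge 1/a$, which holds because $x_k\ge x_0=\frac1a-a_n$ throughout the bisection (the interval only shrinks), so $1/(x_k+a_n)\le a$ and hence $-f'(x_k)\le n a^2$, giving step (iii)–(iv); (iv→v) the interval length halves each step, $y_k-x_k=2^{-k}(y_0-x_0)$, and $y_0-x_0=\frac na-\frac1a=\frac{n-1}{a}$, so $na^2(y_k-x_k)=na^2\cdot 2^{-k}\cdot\frac{n-1}{a}=n(n-1)a\,2^{-k}$. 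Since $f(x_k)\ge a\ge f(y_k)$ at every bisection step (the invariant preserved by \eqref{bacBP}), both $|f(x_k)-a|$ and $|f(y_k)-a|$ are bounded by $f(x_k)-f(y_k)\le n(n-1)a\,2^{-k}$, which is $\le\delta$ as soon as $2^{k}\ge n(n-1)a/\delta$, i.e. $k=\lceil\log_2\frac{n(n-1)a}{\delta}\rceil$, giving \eqref{delercBP}. Finally, each bisection step costs $O(n)$ flops to evaluate $f(z_k)$, so the total cost is $O(n\log\frac{na}{\delta})$, which is the stated arithmetic complexity. The one subtlety to keep an eye on is making sure the lower bound $x_k+a_n\ge 1/a$ really is maintained — it is, because the bisection starts from $[x_0,y_0]$ and every update replaces an endpoint by an interior point, so $x_k\ge x_0$ always.
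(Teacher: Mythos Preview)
Your proposal is correct and follows essentially the same route as the paper's proof: (a) is direct calculus; (b) via the sandwich $\frac{1}{x+a_n}<f(x)<\frac{n}{x+a_n}$ (your argument by dropping/bounding terms is the same thing unpacked); (c) via $g'=a-f$; (d) by appeal to classical Newton/Regula-Falsi theory for convex decreasing functions (the paper cites Ostrowski's Fourier conditions and likewise hand-waves the Regula-Falsi rate); and (e) via the convexity bound $f(x_k)-f(y_k)\le -(y_k-x_k)f'(x_k)$ together with $x_k+a_n\ge 1/a$, which is exactly the paper's chain. One slip to fix: in your step (iii) you wrote $-f'(x_k)=\sum 1/(x_k+a_i)^3$, but $-f'$ has \emph{squares}, not cubes; your subsequent route through $x_k+a_n\ge 1/a$ giving $-f'(x_k)\le na^2$ is correct and is precisely what the paper does.
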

\begin{proof} (a) Straightforward.

\noindent
(b) Clearly, $\frac{1}{x+a_n}<f(x)< \frac{n}{x+a_n}$ for $x>-a_n$.  This yields \eqref{fxain}.

\noindent
(c) Straightforward.

\noindent
(d)  The inequalities $f(x_0)>0>f(y_0)$, strict convexity of $f$, and the fact that $f$ decreasing yield that $x(a)>x_1>x_0, x(a)<y_1<y_0$.  Use induction to deduce that $x(a)>x_k>x_{k-1}, x(a)<y_k<y_{k-1}$ for $k\in\N$.   The classical  conditions of Fourier yield \cite{Ost73}
$\lim_{k\to\infty} x_k=x(a)$, and the convergence  is  quadratic.  It is  not hard to show that in this case $\lim_{k\to\infty} y_k=x(a)$ and the convergence is quadratic.

\noindent (e) Let $y>x>-a_n$.  Then
\begin{equation*}
\begin{aligned}
&0<f(x)-f(y)=\sum_{i=1}^n \big(\frac{1}{x+a_i}-\frac{1}{y+a_i}\big)=\\
&(y-x)\sum_{i=1}^n \frac{1}{(x+a_i)(y+a_i)}<-(y-x)f'(x)<\frac{(y-x)n}{(x+a_n)^2}
\Rightarrow\\
&f(x)-f(y)<(y-x)na^2 \textrm{ if } x>\frac{1}{a}-a_n.
\end{aligned}
\end{equation*}
This establishes the inequalities \eqref{fxkykcBPin} and \eqref{delercBP}.

Observe that the arithmetic complexity to compute $f(x)$ is $O(n)$. Hence, the arithmetic complexity the bisection algorithm for finding $x>-a_n$ such that $|f(x)-a|\le \delta$ is  $O(n\log \frac{na}{\delta})$.
\end{proof}

We remark that the best algorithm to find $x$ that satisfies $|f(x)-a|\le \delta$ is first to apply the bisection algorithm and then the Newton-Ralphson and  Regula Falsi iterations.

We state an analogous result to Menon's result for positive matrices \cite{Men68}, which is corollary of part (b) of Lemma \ref{minucbpotplem}:
\begin{corollary}\label{CanBPmen}
Let $C\in\R^{n_1\times n_2}, \bd_i\in\R_{++}^{n_i}, i\in[2]$.  Assume furthermore that $\1_{n_1}^\top\bd_1=\1_{n_2}^\top\bd_2$.  Then there exists a unique $(\x,\y)\in\rD$ such that  $C(\x,\y)^{-\circ}=[(c_{j_1,j_2}-x_{j_1}-y_{j_1})^{-1}]$ has row sum  column sums $\bd_1^\top$ and $\bd_2$ respectively.
\end{corollary}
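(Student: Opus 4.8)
The plan is to deduce the statement from part~(b) of Lemma~\ref{minubcbpotplem}---the barrier analogue of Lemma~\ref{minuecbpotplem}, which is really the lemma that applies here, since the additive shift $C-\x\1_{n_2}^\top-\1_{n_1}\y^\top$ is the object attached to the \emph{barrier} relaxation---after one normalization of the data. Set $s=\1_{n_1}^\top\bd_1=\1_{n_2}^\top\bd_2>0$, put $\bp_i=\bd_i/s$ for $i\in[2]$, which are positive probability vectors with $\1_{n_1}^\top\bp_1=\1_{n_2}^\top\bp_2$, and fix $\varepsilon=1/s>0$. The bookkeeping identity that drives the argument is the following equivalence: for $(\x,\y)\in\R^{n_1}\times\R^{n_2}$, the matrix $V\defeq\varepsilon\,C(\x,\y)^{-\circ}$, whose entries are $\varepsilon(c_{j_1,j_2}-x_{j_1}-y_{j_2})^{-1}$, lies in $\rV_o(P)$ and has the form \eqref{bBPOTus} \emph{if and only if} $(\x,\y)\in\rD$ and $C(\x,\y)^{-\circ}=sV$ has row sums $\bd_1$ and column sums $\bd_2$. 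Indeed, positivity of the entries of $V$ is positivity of the entries of $C(\x,\y)$, i.e. $(\x,\y)\in\hat\rD$; the normalization $\1_{n_1}^\top\x=\1_{n_2}^\top\y$ is membership in $\rD$; and the marginals $\bp_1,\bp_2$ of $V$ rescale by $1/\varepsilon=s$ to the row and column sums $\bd_1,\bd_2$ of $C(\x,\y)^{-\circ}$.

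For existence I would apply Lemma~\ref{minubcbpotplem}(b) to this $P$ and $\varepsilon$: the relaxed problem $\tau_\beta(C,P,\varepsilon)$ has a unique minimizer $U\in\rV_o(P)$ of the form \eqref{bBPOTus}, with a uniquely determined pair $(\bu_1,\bu_2)$ satisfying $\1_{n_1}^\top\bu_1-\1_{n_2}^\top\bu_2=0$. By the equivalence above, $(\bu_1,\bu_2)\in\rD$ and $C(\bu_1,\bu_2)^{-\circ}=sU$ has row sums $\bd_1$ and column sums $\bd_2$, which is the existence half of the claim.

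For uniqueness I would use Theorem~\ref{dbcBPOTP}. Let $(\x,\y)\in\rD$ be any point for which $C(\x,\y)^{-\circ}$ has row sums $\bd_1$ and column sums $\bd_2$. Differentiating $\varphi(\cdot,\cdot,C)$ from \eqref{mfbBOTP}, its partial derivative in $x_{j_1}$ at $(\x,\y)$ is $p_{j_1,1}-\varepsilon\sum_{j_2}(c_{j_1,j_2}-x_{j_1}-y_{j_2})^{-1}=p_{j_1,1}-\varepsilon d_{j_1,1}=0$ by the choice $\varepsilon=1/s$, and similarly in each $y_{j_2}$; hence $\nabla\varphi(\x,\y,C)=0$. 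As $\varphi$ is concave on $\hat\rD$, the point $(\x,\y)$ is a maximizer of $\varphi$ over $\hat\rD$, so by Theorem~\ref{dbcBPOTP} it lies on the line $\rL$ of \eqref{defLBP}; since $\rL$ meets $\rD$ only at $(\bu_1,\bu_2)$, we get $(\x,\y)=(\bu_1,\bu_2)$. (Alternatively, $V=\varepsilon C(\x,\y)^{-\circ}\in\rV_o(P)$ satisfies the Lagrange condition $c_{j_1,j_2}-\varepsilon/v_{j_1,j_2}=x_{j_1}+y_{j_2}$ from the proof of Lemma~\ref{minubcbpotplem}(b), so by strict convexity of $\tr C^\top V+\varepsilon\beta(V)$ it equals the minimizer $U$, and the normalization step in that proof then forces $(\x,\y)=(\bu_1,\bu_2)$.)

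I do not expect a genuine obstacle beyond the uniqueness step, whose one subtlety is that the prescribed-marginal conditions must be converted into the \emph{stationarity} conditions of the barrier problem---so that strict concavity of $\varphi$ on $\rD$ (equivalently, strict convexity of the primal objective on $\rV_o(P)$) can be invoked---rather than treated as $n_1+n_2$ scalar equations in $n_1+n_2$ unknowns, which would not on its own exclude spurious solutions lying outside $\hat\rD$. The only other thing to flag is correcting the reference in the statement from Lemma~\ref{minuecbpotplem}(b) to Lemma~\ref{minubcbpotplem}(b).
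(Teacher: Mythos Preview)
Your proposal is correct and follows essentially the same normalization-and-apply-the-lemma route as the paper's proof: set $\varepsilon=1/s$, $\bp_i=\bd_i/s$, and invoke part~(b) of Lemma~\ref{minubcbpotplem}. The paper's proof is a two-line version of your argument (it writes the normalization and then says ``Apply part~(b)''), whereas you spell out the existence and uniqueness halves separately; your uniqueness via the stationarity of $\varphi$ and Theorem~\ref{dbcBPOTP} is a valid alternative to the strict-convexity route you sketch in parentheses, which is closer to what the paper has in mind. You are also right that the paper's own reference to Lemma~\ref{minuecbpotplem} should be to Lemma~\ref{minubcbpotplem}.
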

\begin{proof} Set $\varepsilon=(\1_{n_1}^\top\bd_1)^{-1}$.  Then $\bp_i=\varepsilon\bd_i,i\in [2]$ are positive probability vectors.  Apply part (b) of Lemma \ref{minubcbpotplem}.
\end{proof}

We now discuss an obvious generalization of the Sinkhorn algorithm for finding the unique $(\x,\y)\in\rD$ satisfying the conditions of Corollary \ref{CanBPmen}.    
First we introduce the rebalance step for $C(\bv,\bw)$ for $(\bv,\bw)\in \hat\rD$:
\begin{equation}\label{rebcBP}
\begin{aligned}
&C_{rb}(\bv,\bw)=C(\x,\y), \quad (\x,\y)\in \rD, \\ 
&\x=\bv+t\1_{n_1},\y=\bw-t\1_{n_2},  t=\frac{\1_{n_2}^\top\bw -\1_{n_1}^\top \bv}{n_1+n_2}.
\end{aligned}
\end{equation}
Observe that  
$$\bp_1^\top\x+\bp_2^\top\y=\bp_1^\top\bv+\bp_2^\top\bw, \quad C(\x,\y)=C(\bv,\bw).$$

Given $\bv,\bw\in\hat D$ we now discuss row and column rescaling corresponding to $C(\bv,\bw)$.  
\begin{enumerate}[(a)]
\item Row rescaling: $C_{row}(\bv,\bw)=C(\bv+\bs,\bw)$: is the unique $\bs\in \R^{n_1}$ such that $(\bv+\bs,\bw)\in \hat\rD$ such that $C(\bv+\bs,\bw)\1_{n_2}=\bd_1$.  (The existence and uniqueness of $\bs$ follows from Lemma \ref{zfalem}.)
\item Column rescaling: $C_{col}(\bv,\bw)=C(\bv,\bw+\bt)$: is the unique $\bt\in \R^{n_2}$ such that $(\bv,\bw+\bt)\in \hat\rD$ such that $\1_{n_1}^\top C(\bv,\bw+\bt)=\bd_2^\top$.  (The existence and uniqueness of $\bt$ follows from Lemma \ref{zfalem}.)
\end{enumerate}

\noindent
$\quad\quad$ $\;\;$\textbf{Sinkhorn type algorithm for row and column rescaling} 
\begin{itemize}
\item[] Set $(\x_0,\y_0)\in\rD$
\item[] for $k:=0,1,2, \ldots, $ 
\item[]$\quad$ $C_{row}(\x_k,\y_k)=C(\z_k,\y_k)$
\item[]$\quad$ $C_{col}(\z_k,\y_k)=C(\z_k,\bv_k)$
\item[]$\quad$ $C(\x_{k+1},\y_{k+1})=C_{rb}(\z_k,\bv_k)$
\item[]$\quad\quad$ stop when $\|C(\x_{k+1},\y_{k+1})\1_{n_2}-\bd_1\|<\delta, $
\item[]$\quad\quad\quad\quad\quad$ and $\|1_{n_1}^\top C(\x_{k+1},\y_{k+1})-\bd_2^\top\|<\delta$
\item[]end
\end{itemize} 

The Sinkhorn type algorithm is a coordinate descent (minimization) \cite{LT92,AWR17,TDGU20,LHCJ22,Fri20} for the minimum of the function $-\varphi(\x,\y)$ given in \eqref{mfbBOTP}.  Hence,  the above Sinkhorn type algorithm converges at least linearly.

It seems to us that the minimal solution of the form \eqref{bBPOTus} does not have the problem that the solution \eqref{minsolebtot} has, which was mentioned in the end of subsection \ref{subsec:cbotp}.   As we pointed out the modified Sinkhorn algorithm is $\log n$ longer, than the standard Sinkhorn algorithm.
\section{Classical multi-partite optimal transport problem and its relaxations}\label{sec:cmotp}
In this section we discuss first the  known results on the classical MPOTP and its entropic relaxation.  Second we discuss the barrier relaxations the primary and dual of classical MPOTP.
\subsection{An entropic relaxation of the classical multi-partite optimal transport problem}\label{cbotp}
A multi-partite optimal transport problem (MPOTP) deals with a joint distribution $Z$ of $d$-random variables $X_1,\ldots,X_d$ whose marginals $(\bp_1,\ldots,\bp_d)$ are prescribed.   We assume that $\bp_i\in \Delta^{n_i}$ for $i\in[d]$.  The joint distribution $Z$ is given by a tensor $\cZ=[z_{j_1,\ldots, j_d}] \in (\otimes_{i=1}^d\R^{n_i})_+$,  whose sum of coordinates is one.  Here,  $(\otimes_{i=1}^d\R^{n_i})_+\supset(\otimes_{i=1}^d\R^{n_i})_{++}$ stand for all nonnegative and positive tensors in $\otimes_{i=1}^d \R^{n_i}$ respectively.  Furthermore, $\cZ$ is in the set  
\begin{equation}\label{defV(P)ten}
\begin{aligned}
&\rV(P)=\{\cV=[v_{j_1,\cdots, j_d}]\in (\otimes_{i=1}^d\R^{n_i})_+: \\
&  \sum_{j_k\in[n_k],k\in[d]\setminus\{i\}}v_{j_1,\cdots ,j_d}=p_{j_i,i}, j_i\in[n_i], i\in[d]\},\\
&P=(\bp_1,\ldots,\bp_d).
\end{aligned}
\end{equation}

Denote by $\langle \cC,\cV\rangle$ the standard inner product in $\otimes _{k=1}^d \R^{n_k}$
\begin{equation*}
\langle \cC,\cV\rangle=\sum_{j_k\in[n_k],k\in[d]} c_{j_1,\ldots,j_d}v_{j_1,\ldots,j_d}, \quad \cC,\cV, \in  \otimes _{i=1}^d \R^{n_i}.
\end{equation*}

The MPOTP is given by
\begin{eqnarray}\label{MTOT}
\tau(\cC,P)=\min_{\cV\in \rV(P)}\langle \cC, \cV\rangle.
\end{eqnarray}
It is a LP problem with $\prod_{k=1}^d n_k$ nonnegative variables and $1+\sum_{k=1}^d (n_k-1)$ constraints. 
The MPTOT was considered in \cite{Pi68,Po94} in the context of multidimensional assignment problem, where the entires of the tensor $\cV$ are either $0$ or $1$.
There is a vast literature on  the MPOTP.
See  for example \cite{FV18,BCN19,TDGU20,HRCK,LHCJ22, Fri20,Fri23} and the references therein.   The entropic relaxation of MPOTP can be stated as follows.  For a given $\varepsilon>0$ consider the minimum
\begin{equation}\label{ETOT}
\begin{aligned}
&\tau(\cC,P,\varepsilon)= \min_{\cV\in \rV(P)}\langle \left(\cC, \cV\rangle  -\varepsilon\rE_S(\cV) \right),\\
&\rE_S(\cV)=-\sum_{i_1=\cdots=i_d=1}^{n_1,\ldots,n_d}v_{i_1,\ldots,i_d}\log v_{i_1,\ldots,u_d}.
\end{aligned}
\end{equation}
Recall \cite{Fri20}
\begin{equation}\label{intaueps}
\tau(\cC,P)-\varepsilon \log (n_1\cdots n_d)\le \tau(\cC,P,\varepsilon)\le\tau(\cC,P).
\end{equation}
Using Lagrange multipliers one can show that one needs to find the rescaling factors 
$\lambda_i=(\lambda_{1,i},\ldots,\lambda_{n_i,i})\in\R^{n_i},i\in[d]$ such that the minimal tensor $\cU^{\star}$ is of the form \cite{Fri20}:
\begin{equation}\label{minclsol}
\cU^{\star}=[\exp(\lambda_{j_1,1}+\cdots+\lambda_{j_d,d}-\frac{1}{\varepsilon} c_{j_1,\ldots,j_d})]\in \rV(P).
\end{equation}
The rescaling factors are unique if we assume that 
\begin{equation}
\sum_{i_1=1}^{n_1}\lambda_{j_1,1}=\ldots=\sum_{j_d}^{n_d}\lambda_{j_d,d}.
\end{equation}
The Sinkhorn algorithm consists of choosing in the step $k$ of the iteration a mode $i_k\in[d]$, and rescaling the tensor tensor $\cV_k=[u_{j_1,\ldots,i_d,k}]$ to $\cV_{k+1}=[e^{\lambda_{i_j,i_k}}u_{i_1,\ldots,i_d,k}]$ such that it $i_k$ marginal is $\bp_{i_k}$.
The entropic relaxation was successfully applied in \cite{BCN19,TDGU20,HRCK,LHCJ22, Fri20}.
\subsection{A barrier relaxation of the classical multi-partite optimal transport problem}\label{subsec:brcmot}
The barrier for $(\otimes_{j=1}^d \R^{n_j})_{++}$ is
\begin{equation}\label{bpten}
\beta(\cV)=-\sum_{i_1=\ldots=i_d=1}^{n_1,\ldots,n_d}\log v_{i_1,\ldots,i_d}, \quad \cV=[v_{i_1,\ldots, i_d}]\in (\otimes_{j=1}^d \R^{n_j})_{++}.
\end{equation}
A barrier relaxation of the classical MPOTP is:
\begin{equation}\label{betMOTP}
\tau_{\beta}(\cC,P,\varepsilon)=\min_{\cV\in \rV_o(P)} \langle \cC,\cV\rangle+\varepsilon \beta(V).
\end{equation}

Denote
\begin{equation}\label{balRp}
\begin{aligned}
&(\R^{n_1}\times\cdots\times\R^{n_d})_{++,b}=\\
&\{(\x_1,\ldots,\x_d): \x_j\in\R^{n_j}_{++}, j\in[d], \1_{n_1}^\top\x_1=\cdots=\1_{n_d}^\top\x_d\}.
\end{aligned}
\end{equation}
The following lemma is an extension to Lemma \ref{minubcbpotplem} to $d\ge 2$ modes:
\begin{lemma}\label{minubcmpotplem}
Let $\bp_i\in\Delta^{n_i}, i\in[d]$.  Assume that $\rV(P)$ be defined by \eqref{defV(P)ten}.  
\begin{enumerate}[(a)]
\item 
Consider the minima $\tau(\cC,P)$ and $\tau_{\beta}(\cC,P,\varepsilon)$ that are given by \eqref{MTOT} and \eqref{betMOTP}, respectively. Then
\begin{equation*}\label{btauin}
 \tau(C,P)<\tau_\beta(C,P,\varepsilon)
\end{equation*}
\item The unique minimizing matrix $U\in \rV_o(P)$ for \eqref{betMOTP}  is of the form 
\begin{equation}\label{bMPOTus}
\begin{aligned}
&\cU=[u_{j_1,\ldots,j_d}]\in \rV_o(P),  \\
&u_{j_1,\ldots,j_d}=\frac{\varepsilon}{c_{j_1,\ldots,j_d}-u_{j_1,1}-\cdots-u_{j_d,d}}, \\
&\1_{n_1}^\top\bu_1=\cdots=\1_{n_d}^\top\bu_d,
\end{aligned}
\end{equation}
 where $\bu_1, \ldots,\bu_d$ are unique.
\end{enumerate}
\end{lemma}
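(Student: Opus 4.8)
\emph{Proof plan.} The plan is to mirror, step by step, the proofs of Lemma \ref{minuecbpotplem} and Lemma \ref{minubcbpotplem}, using Lemma \ref{x1xdqfin} to streamline the one place where the number of modes genuinely enters. First I would record the well-posedness of \eqref{betMOTP}: writing $\sigma$ for the common value $\1_{n_1}^\top\bp_1=\cdots=\1_{n_d}^\top\bp_d>0$, the positive tensor $\sigma^{-(d-1)}\bp_1\otimes\cdots\otimes\bp_d$ lies in $\rV(P)$, so $\rV_o(P)\ne\emptyset$; and the objective $\langle\cC,\cV\rangle+\varepsilon\beta(\cV)$ is the sum of a linear term and the strictly convex functions $-\varepsilon\log v_{i_1,\ldots,i_d}$, hence strictly convex on $\rV_o(P)$, and it tends to $+\infty$ at every boundary point of $\rV_o(P)$. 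Therefore $\tau_\beta(\cC,P,\varepsilon)$ is attained at a unique $\cU=[u_{i_1,\ldots,i_d}]\in\rV_o(P)$.

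Part (a) then follows exactly as in part (a) of Lemma \ref{minubcbpotplem}: since $\beta(\cV)>0$ on $\rV_o(P)$ and $\cU\in\rV(P)$, one gets $\tau_\beta(\cC,P,\varepsilon)=\langle\cC,\cU\rangle+\varepsilon\beta(\cU)>\langle\cC,\cU\rangle\ge\tau(\cC,P)$. For the shape of $\cU$ in part (b), I would apply Lagrange multipliers to the marginal constraints of \eqref{defV(P)ten}: attaching $\lambda_i=(\lambda_{1,i},\ldots,\lambda_{n_i,i})^\top$ to the mode-$i$ constraint, stationarity at the interior minimizer gives
\[
c_{j_1,\ldots,j_d}-\frac{\varepsilon}{u_{j_1,\ldots,j_d}}-\lambda_{j_1,1}-\cdots-\lambda_{j_d,d}=0,\qquad j_i\in[n_i],\ i\in[d],
\]
that is, $u_{j_1,\ldots,j_d}=\varepsilon\big(c_{j_1,\ldots,j_d}-\lambda_{j_1,1}-\cdots-\lambda_{j_d,d}\big)^{-1}$. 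Replacing each $\lambda_i$ by $\lambda_i+t_i\1_{n_i}$ with $t_1+\cdots+t_d=0$ leaves these equations unchanged, a $(d-1)$-parameter gauge freedom, and I would use it to impose the $(d-1)$ normalizations $\1_{n_1}^\top\bu_1=\cdots=\1_{n_d}^\top\bu_d$ (solving a nonsingular linear system for $(t_1,\ldots,t_d)$ on the hyperplane $\sum_i t_i=0$), setting $\bu_i:=\lambda_i$; this produces \eqref{bMPOTus}.

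Finally, for the uniqueness of $\bu_1,\ldots,\bu_d$: since $\cU$ is unique and $\cC$ is known, the numbers $a_{j_1,\ldots,j_d}:=c_{j_1,\ldots,j_d}-\varepsilon/u_{j_1,\ldots,j_d}$ are determined, and any admissible choice of the $\bu_i$ satisfies $u_{j_1,1}+\cdots+u_{j_d,d}=a_{j_1,\ldots,j_d}$ for all index tuples together with $\1_{n_1}^\top\bu_1=\cdots=\1_{n_d}^\top\bu_d$. If $(\bu_1',\ldots,\bu_d')$ were a second such solution, then $\x_i:=\bu_i-\bu_i'$ would obey the balance condition \eqref{balancecond} and $\sum_{j_1,\ldots,j_d}(x_{j_1,1}+\cdots+x_{j_d,d})^2=0$, so the lower bound in \eqref{sumxjiin} of Lemma \ref{x1xdqfin} forces $\|\x_i\|=0$ for every $i$; hence $\bu_i=\bu_i'$. (Equivalently, summing the linear system over all indices, and then over all but one index, recovers each $\bu_i$ explicitly, exactly as in part (b) of Lemma \ref{minuecbpotplem}.) I do not expect a real obstacle: the whole proof is routine once the earlier lemmas are in hand, and the single point at which $d\ge2$ modes matter --- injectivity of the ``balanced sum'' map $(\x_1,\ldots,\x_d)\mapsto[x_{j_1,1}+\cdots+x_{j_d,d}]$ on the balanced subspace --- is precisely what Lemma \ref{x1xdqfin} delivers; the only mild care needed is checking that the gauge-fixing system for $(t_1,\ldots,t_d)$ is nonsingular, which is immediate since $\sum_i 1/n_i>0$.
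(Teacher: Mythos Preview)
Your proposal is correct and follows exactly the approach the paper indicates: the paper does not give a proof of this lemma but states that it ``is a minor modification of the proof of Lemma~\ref{minubcbpotplem} and is left to the reader,'' and your argument carries out precisely that modification. Your use of the lower bound in Lemma~\ref{x1xdqfin} to establish uniqueness of $(\bu_1,\ldots,\bu_d)$ on the balanced subspace is a clean way to handle the one genuinely $d$-dependent step, and you also note the equivalent explicit-summation route used in the bipartite proof of Lemma~\ref{minuecbpotplem}.
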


The proof of this lemma is a minor modificaton of the proof of Lemma  \ref{minubcbpotplem} and is left to the reader.   Denote
\begin{equation}\label{defC(x1,xd)}
\begin{aligned}
&\cC(\x_1,\ldots,\x_d)=\cC-\sum_{i=1}^d (\otimes_{k=1}^{i-1}\1_{n_k})\otimes\x_i\otimes(\otimes_{k=i+1}^d \1_{n_k}),\\
&\hat\rD=\{(\x_1,\ldots,\x_d)\in\R^{n_1}\times\cdots\times\R^{n_d}: \cC(\x_1,\ldots,\x_d)>0\},\\
&\rD=\{(\x_1,\ldots,\x_d)\in\hat\rD: \1_{n_1}^\top\x_1=\cdots=\1_{n_d}^\top\x_d\}.
\end{aligned}
\end{equation}

The following lemma is an extension of Lemma \ref{maxu1u2est} to $d\ge 2$ modes.  For simplicity of exposition we consider the case $n_1=\cdots=n_d=n$:
\begin{lemma}\label{maxu1udest}  Let $2\le n\in\N$, and $0\ne \cC=[c_{j_1,\ldots,j_d}]\in\otimes^d\R^{n}$.   Denote
\begin{equation}\label{cminmax}
\begin{aligned}
&c_{\min}=\min_{j_i\in[n_i],i\in[d]}c_{j_1,\ldots,j_d},\quad  c_{\max}=\max_{j_i\in[n_i],i\in[d]}c_{j_1,\ldots,j_d},\\
&c=\max_{j_i\in[n_i],i\in[d]}|c_{j_1,\ldots,j_d}|.
\end{aligned}
\end{equation}
\begin{enumerate}[(a)]
\item Consider the minimum problem \eqref{betMOTP}.   
Then
\begin{equation}\label{lmtau}
 \tau(\cC,P)\ge c_{\min}.
\end{equation}
\item
The dual maximum problem to \eqref{betMOTP} is 
\begin{equation}\label{dMOTP}
\begin{aligned}
&\tau(\cC,P)=\max_{(\x_1,\ldots,\x_d)\in\R^{n_1}\times\cdots \times\R^{n_d}, C(\x_1,\ldots,\x_d)\ge 0}\sum_{i=1}^d \bp_i\x_i=\\
&\sup_{(\x_1,\ldots,\x_d)\in\hat\rD} \sum_{i=1}^d \bp_i\x_i=
\sup_{(\x_1,\ldots,\x_d)\in\rD} \sum_{i=1}^d \bp_i\x_i.
\end{aligned}
\end{equation}  
\item  Let $\bp_i, i\in[d]$ are positive probability vectors. Assume that $\cC(\x_1,\ldots,\x_d)$ is defined in \eqref{defC(x1,xd)}, and suppose that
\begin{equation}\label{x1xdlbin}
(C(\x_1,\ldots,\x_d)\ge 0)\wedge(\sum_{i=1}^d \bp_i^\top \x_i\ge c_{\min})\wedge(\1_{n}^\top\x_1=\cdots=\1_{n}^\top\x_d).
\end{equation}
Then
\begin{equation}\label{u1udineq}
\begin{aligned}
\|(\x_1,\cdots,\x_d)\|_2=\sqrt{\sum_{i=1}^d\|\bx_i\|^2}<
\frac{2c\sqrt{n}}{\prod_{i=1}^d\min_{j_i\in[n]}p_{j_i,i}}.
\end{aligned}
\end{equation}
\item Let
\begin{equation}\label{x0y0cBPd}
\x_{i,0}=\frac{t}{n}\1_{n}, i\in[d], \quad t=\frac{(-1+c_{\min})n}{d}.
\end{equation}
Then $(\x_{1,0},\ldots,\x_{d,0})\in\rD$. 
\item  Let 
\begin{equation}\label{defrcBPd}
r^2=\frac{(9c^2+1)n}{(\prod_{i=1}^d\min_{j_i\in[n]}p_{j_i,i})^2}.
\end{equation}
The interior of the ball
$\rB((\x_{1,0},\ldots,\x_{d,0}),r)=\{\sum_{i=1}^d \|\x_i-\x_{i.0}\|^2\le  r^2\},$
denoted as $\rB_o((\x_{1,0},\ldots,\x_{d,0}),r)$, contains $(\x_1,\ldots,\x_d)$ that satisfies the conditoins  \eqref{x1xdlbin}.
\item Let $\rD_1=\rD\cap   \rB_o((\x_{1,0},\ldots,\x_{d,0}),r)$.  Then for any boundary point $(\x_1,\ldots,\x_d)$ of $\rD_1$ the inequality $\|(\x_{1,0},\ldots,\x_{d,0})-(\x_1,\ldots,,\x_d)\|\ge \frac{1}{\sqrt{d}}$ holds.
\item The function
\begin{equation}\label{hbcMT}
\begin{aligned}
&\hat\beta(\x_1,\ldots,\x_d)=-\sum_{j_1=\cdots=j_d=1}^{n} \log(c_{j_1,\cdots,j_d}-x_{j_1,1}-\cdots-x_{j_d,d})\\
&-\log\left(r^2-\sum_{i=1}^d\|\x_i-\x_{0,i}\|^2\right)+\log r^2
\end{aligned}
\end{equation}
is a barrier function for the convex domain $\rD_1$ with $\theta(\hat\beta)\le n^d+1$.
\end{enumerate}
\end{lemma}
The proof of this lemma is similar to the proof of Lemma \ref{maxu1u2est}, and we omit it. The following theorem is a generalization of Theorem \ref{IPMcBP}.
\begin{theorem}\label{IPMcMP}
Let $d\ge 3$, and $\bp_i\in\Delta^{n},i\in[d]$ be positive probability vectors.  Consider the minimum \eqref{betMOTP}, which corresponds to the maximum dual problem \eqref{dMOTP}.   Apply the IPM short step interior path algorithm with the barrier \eqref{hbcMT} to the $-\min$ problem as in \eqref{min=maxcBP}, where $r$ is given by \eqref{defrcBPd}.    Let $c=\max_{j_i\in[n_i],i\in[d]}|c_{j_1,\ldots,j_d}|$.
Choose a  starting point  given by \eqref{x0y0cBPd}.
The CPA algorithm finds the value $\tau(C,P)$ within precision $\delta>0$ in  
$O\big(n^{d/2}\log \frac{n^d\sqrt{d}r}{\delta}\big)$
iterations.  The arithmetic complexity of CPA is $O\big(dn^{3d/2}\log\frac{n^d\sqrt{d}r}{\delta}\big)$.
\end{theorem}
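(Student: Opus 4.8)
The argument follows the template of the proof of Theorem~\ref{IPMcBP}, with Lemma~\ref{maxu1u2est} replaced by its $d$-mode counterpart Lemma~\ref{maxu1udest}; the plan is to supply the three ingredients (a bounded barrier domain, an iteration count, a per-iteration flop count). First I would reduce to a bounded program. By part~(b) of Lemma~\ref{maxu1udest} the value $\tau(\cC,P)$ equals that of the dual LP \eqref{dMOTP}, and it is attained at some $(\x_1^\star,\dots,\x_d^\star)\in\rD$ with $\1_n^\top\x_1^\star=\cdots=\1_n^\top\x_d^\star$. Any such maximizer satisfies $\cC(\x_1^\star,\dots,\x_d^\star)\ge 0$ and $\sum_i\bp_i^\top\x_i^\star=\tau(\cC,P)\ge c_{\min}$ (part~(a)), hence obeys \eqref{x1xdlbin}, hence by part~(e) lies in the open ball $\rB_o((\x_{1,0},\dots,\x_{d,0}),r)$ with $r$ as in \eqref{defrcBP} and centre \eqref{x0y0cBP}. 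Setting $\rD_1=\rD\cap\rB_o((\x_{1,0},\dots,\x_{d,0}),r)$, the \emph{bounded} problem $\max_{(\x_1,\dots,\x_d)\in\rD_1}\sum_i\bp_i^\top\x_i$ therefore still has value $\tau(\cC,P)$, and by part~(g) the function $\hat\beta$ of \eqref{hbcMT} is a barrier for $\rD_1$ with $\theta(\hat\beta)\le n^d+1$. One then runs the short-step central-path algorithm of Appendix~\ref{sec:ipm} on this problem — equivalently, on the $-\min$ form \eqref{min=maxcBP} with the ball barrier adjoined — from the starting point $(\x_{1,0},\dots,\x_{d,0})\in\rD$ given by part~(d).

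Second, the iteration count. The general estimate \eqref{Renthm} bounds the number of Newton iterations needed to push the objective within $\delta$ of $\tau(\cC,P)$ by
\[
O\!\Big(\sqrt{\theta(\hat\beta)}\,\log\tfrac{\theta(\hat\beta)}{\delta\,\mathrm{sym}((\x_{1,0},\dots,\x_{d,0}),\rD_1)}\Big),
\]
where $\mathrm{sym}$ is bounded below by the ratio of the least to the greatest distance from $(\x_{1,0},\dots,\x_{d,0})$ to $\partial\rD_1$. The latter is at most $r$ since $\rD_1\subseteq\rB((\x_{1,0},\dots,\x_{d,0}),r)$, while part~(f) of Lemma~\ref{maxu1udest} gives $1/\sqrt d$ for the former, so $\mathrm{sym}\ge 1/(\sqrt d\,r)$. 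Substituting $\theta(\hat\beta)\le n^d+1$ yields the asserted $O\big(n^{d/2}\log\frac{n^d\sqrt d\,r}{\delta}\big)$ iterations.

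Third, the cost of one iteration. Each iteration is a single Newton step for $\sum_i\bp_i^\top\x_i-\mu\hat\beta$, whose arguments $(\x_1,\dots,\x_d)$ lie in $\R^{nd}$ and whose barrier has $n^d$ logarithmic terms indexed by $[n]^d$ plus the ball term. Forming $\cC(\x_1,\dots,\x_d)$ and the entrywise tensors $[(c_{j_1,\dots,j_d}-\sum_i u_{j_i,i})^{-1}]$, $[(c_{j_1,\dots,j_d}-\sum_i u_{j_i,i})^{-2}]$ costs $O(dn^d)$ flops; accumulating these into $\nabla\hat\beta$ and into the $nd\times nd$ Hessian $\nabla^2\hat\beta$ — which is block structured, with diagonal diagonal-blocks and $O(d^2)$ blocks overall — costs $O(dn^d)$ flops up to factors polynomial in $d$; and since $d\ge3$ forces $(nd)^3=d^3n^3=O(dn^d)$, solving the Newton system with $\nabla^2\hat\beta$ is also $O(dn^d)$. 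Hence each iteration costs $O(dn^d)$ flops, and multiplying by the iteration bound above gives the arithmetic complexity $O\big(dn^{3d/2}\log\frac{n^d\sqrt d\,r}{\delta}\big)$.

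Since the structural input (Lemma~\ref{maxu1udest}, whose proof mirrors Lemma~\ref{maxu1u2est}) is granted, the proof is essentially bookkeeping, and I expect no deep obstacle. The two points needing care are the symmetry estimate in the second step — it is the bound $1/\sqrt d$ from part~(f) that produces the $\sqrt d$ inside the logarithm and hence must be tracked — and the per-iteration flop count in the third step, where one must check that the exponentially many ($n^d$) barrier summands and the $nd\times nd$ Hessian can still be assembled and factored in $O(dn^d)$ flops, so that the final bound scales as $n^{3d/2}$ and not worse; the exponential size $n^d$ in $n$ is, of course, unavoidable for the $d$-partite problem.
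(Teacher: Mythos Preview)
Your proposal is correct and follows essentially the same route as the paper: invoke Lemma~\ref{maxu1udest} to confine the dual to $\rD_1$ with $\theta(\hat\beta)\le n^d+1$ and $\mathrm{sym}\ge 1/(\sqrt d\,r)$, plug into \eqref{Renthm} for the iteration count, then tally the gradient, Hessian, and linear-solve flops to get $O(dn^d)$ per iteration. If anything, your description of the Hessian block structure is slightly more accurate than the paper's, which repeats the $d=2$ claim that off-diagonal entries have ``2 summands'' even though cross-mode entries for $d\ge3$ actually carry $n^{d-2}$ terms from the log-barrier; this does not affect the final $O(dn^d)$ bound.
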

\begin{proof}
Most of the  proof of this theorem is similar to the proof of Theorem \ref{IPMcBP}.  We discuss briefly the number of IPM iterations and the number of flops.  Lemma \ref{maxu1udest} shows that $\frac{r_{out}}{r_{in}}\le \sqrt{d}r$.  Furthermore, $\theta(\hat\beta)\le n^d+1$.   Hence, the number of iterations of IPM is $O\big(n^{d/2}\log \frac{n^d\sqrt{d}r}{\delta}\big)$.

Clearly, $\frac{\partial \hat\beta}{\partial x_{j_k}}$ has $n^{d-1}+1$ summands.    Therefore, the computation of $\nabla \hat\beta$ is $O(dn^d)$.  The diagonal entry of the Hessian $H(f)$ has $n^{d-1}+2$ summands.  Hence, the computation of the diagonal entries of $H(f)$ needs $O(dn^d)$ flops.  An off-diagonal entry of $H(f)$ has 2 summands.  Hence, the computation of the off-diagonal entries of $H(f)$ is $O(dn^2)$.
To compute $H^{-1}(\hat\beta)(\nabla\beta-(\bp_1,\ldots,\bp_2))$ we need $O((dn)^3)$ flops. As  $d\ge 3$, one iteration of the Newton methos needs $O(dn^{d})$ flops.  Thus,  arithmetic complexity of the IPM method is  $O\big(n^{3d/2}\log\frac{ n^d\sqrt{d}r}{\delta}\big)$.
\end{proof}
\section{Quantum bi-partite optimal control and its relaxations}\label{sec:qbotp}
Let $\cH$ be an dimensional vector space with the inner product $\langle \x,\y\rangle$.
Let $\rB(\cH)\supset \rS(\cH)\supset \rS_{+}(\cH)\supset \rS_{++}(\cH)$ be the space of linear operators, the real subspace of self-adjoint operators, the cone of positive semidefinite operators,  and the open set of positive definite operators on $\cH$ respectively.
By choosing a fixed orthonormal basis $|0\rangle,\ldots,|n-1\rangle$ we identify $\cH$  with $\C^n$, where $\langle\x,\y\rangle=\x^*\y$.  Then the above linear operators on $\cH$ correspond to $\C^{n\times n}\supset \rH_n\supset \rH_{n,+}\supset\rH_{n,++}$,
corresponding to the algebra of $n\times n$ complex valued matrices, the Hermitian matrices, the positive semidefinite matrices, and the positive definite matrices respectively.

\subsection{A barrier relaxation of the quantum bi-partite optimal transport problem}\label{subsec:brqbot}
Observe
\begin{equation*}
\nabla \det Z=\adj(Z^\top),  \quad Z\in\C^{n\times n},
\end{equation*}
where we view $\det Z$ as a homogeneous polynomial of degree $n$ in $n^2$ variables.   Suppose that $\det Z$ is not zero, and choose a branch of $\log\det Z$.
Then
\begin{equation}\label{derlogdet}
\nabla \log\det Z=\frac{1}{\det Z} (\adj Z^\top) =(Z^\top)^{-1}.
\end{equation}
If $Z\in\rH_{n,++}$ we assume that $\log\det Z\in\R$.

A Hermitian matrix $H\in \rH_{(n_1,n_2)}$ is viewed as a $4$-tensor with entries $h_{(i_1,j_1)(i_2,j_2)}, i_1,i_2\in [n_1],j_1,j_2\in[n_2]$.  Furthermore, one has the following equalities
\begin{equation*}
\begin{aligned}
&h_{(i_2,j_2))(i_1,j_1)}=\overline{h_{(i_1,j_1)(i_2,j_2)}}, i_1,i_2\in[n_1],j_1,j_2\in[n_2],\\
&H^\top=[\tilde h_{(i_1,j_1)(i_2,j_2)}], \tilde h_{(i_1,j_1)(i_2,j_2)}= h_{(i_2,j_2)(i_1,j_1)},\\
&\tr_1 H=[h_{j_1,j_2,2}],  \quad h_{j_1,j_2,2}=\sum_{i_1=1}^{n_1} h_{(i_1,j_1)(i_1,j_2)},\\
&\tr_2 H=[h_{i_1,i_2,1}],  \quad h_{i_1,i_2,1}=\sum_{j_1=1}^{n_2} h_{(i_1,j_1)(i_2,j_1)}.
\end{aligned}
\end{equation*}
\begin{lemma}\label{minubqbpotplem}
Let $\rho_i\in\rH_{n_i,++,1},n_i>1,i\in[2]$  and assume that $\Gamma(R)$ be defined by \eqref{couprs} for $d=2$.  
\begin{enumerate}[(a)]
\item 
Consider the minima $\kappa(C,R)$ and $\kappa_{\beta}(C,R,\varepsilon)$ that are given by \eqref{qotp} and \eqref{qbMOTP}, respectively. Then
\begin{equation}\label{bqtauin}
 \kappa(C,R)<\kappa_\beta(C,R,\varepsilon).
\end{equation}
\item The unique minimizing matrix $\rho^\star\in \Gamma_o(R)$ for \eqref{qbMOTP}, is of the form 
\begin{equation}\label{qbBPOTus}
\rho^\star=\varepsilon (C-U_1\otimes I_{n_2}-I_{n_1}\otimes U_2)^{-1}\in \Gamma_o(R), \quad U_i\in\rH_{n_i}, i\in[2].
\end{equation}
Furthermore,  there exist a unique solution of the above form satisfying $\tr U_1=\tr U_2$.
\end{enumerate}
\end{lemma}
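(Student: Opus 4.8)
\emph{Proof proposal.} The plan is to mirror the classical argument of Lemma~\ref{minubcbpotplem}, with $\beta(\rho)=-\log\det\rho$ in place of the entrywise log‑barrier and the partial‑trace constraints $\tr_2\rho=\rho_1$, $\tr_1\rho=\rho_2$ in place of the row/column marginal constraints. Throughout note that $\Gamma_o(R)$ is nonempty: the product state $\rho_1\otimes\rho_2$ is positive definite and has $\tr_2(\rho_1\otimes\rho_2)=(\tr\rho_2)\rho_1=\rho_1$, $\tr_1(\rho_1\otimes\rho_2)=(\tr\rho_1)\rho_2=\rho_2$, so it lies in $\Gamma_o(R)$. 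Also $\Gamma(R)$ is compact, being a closed subset of the density matrices.

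First I would settle attainment and uniqueness of the minimizer in \eqref{qbMOTP}. The map $\rho\mapsto\tr C\rho+\varepsilon\beta(\rho)$ is strictly convex on $\Gamma_o(R)$, since $\beta$ is strictly convex (as recalled after \eqref{barpdm}) and $\tr C\rho$ is linear. As $\rho$ approaches the relative boundary of $\Gamma(R)$ some eigenvalue $\lambda_i(\rho)\to 0^+$, so $\beta(\rho)\to+\infty$ while $\tr C\rho$ stays bounded on the compact set $\Gamma(R)$; hence the infimum is attained at a unique point $\rho^\star\in\Gamma_o(R)$. This also yields part (a): for $\rho\in\Gamma_o(R)$ one has $\tr\rho=1$ and $n_1n_2>1$, so every $\lambda_i(\rho)\in(0,1)$ and $\beta(\rho)=-\sum_i\log\lambda_i(\rho)>0$; therefore $\kappa_\beta(C,R,\varepsilon)=\tr C\rho^\star+\varepsilon\beta(\rho^\star)>\tr C\rho^\star\ge\kappa(C,R)$, which is \eqref{bqtauin}.

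Next I would characterize $\rho^\star$ by Lagrange multipliers for the affine constraints $\tr_2\rho=\rho_1$, $\tr_1\rho=\rho_2$, introducing $U_1\in\rH_{n_1}$, $U_2\in\rH_{n_2}$. Using $\tr(U_1\tr_2\rho)=\tr((U_1\otimes I_{n_2})\rho)$, $\tr(U_2\tr_1\rho)=\tr((I_{n_1}\otimes U_2)\rho)$, together with $\nabla_\rho(-\log\det\rho)=-\rho^{-1}$ from \eqref{derlogdet} (valid on $\rH_{n,++}$), stationarity at $\rho^\star$ reads $C-\varepsilon(\rho^\star)^{-1}-U_1\otimes I_{n_2}-I_{n_1}\otimes U_2=0$, i.e. $\rho^\star=\varepsilon\,(C-U_1\otimes I_{n_2}-I_{n_1}\otimes U_2)^{-1}$, which is \eqref{qbBPOTus}. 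Since replacing $(U_1,U_2)$ by $(U_1+tI_{n_1},\,U_2-tI_{n_2})$ leaves $U_1\otimes I_{n_2}+I_{n_1}\otimes U_2$ unchanged, I may fix the unique $t=(\tr U_2-\tr U_1)/(n_1+n_2)$ for which $\tr U_1=\tr U_2$.

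Finally I would prove uniqueness of the normalized pair. As $\rho^\star$ is unique, $A:=C-\varepsilon(\rho^\star)^{-1}=U_1\otimes I_{n_2}+I_{n_1}\otimes U_2$ is determined; if $(U_1',U_2')$ is another pair with $\tr U_1'=\tr U_2'$ and the same $A$, put $V_1=U_1-U_1'$, $V_2=U_2'-U_2$, so $V_1\otimes I_{n_2}=I_{n_1}\otimes V_2$. Taking $\tr_2$ gives $n_2V_1=(\tr V_2)I_{n_1}$ and taking $\tr_1$ gives $n_1V_2=(\tr V_1)I_{n_2}$, so $V_1=cI_{n_1}$, $V_2=c'I_{n_2}$, and $V_1\otimes I_{n_2}=I_{n_1}\otimes V_2$ forces $c=c'$. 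The normalizations give $\tr V_1=\tr U_1-\tr U_1'=\tr U_2-\tr U_2'=-\tr V_2$, hence $cn_1=-cn_2$, so $c=0$ and $(U_1',U_2')=(U_1,U_2)$. The hard part will be the Lagrange‑multiplier step: carefully justifying differentiation through $\log\det$ on the Hermitian space, the constraint‑gradient identities involving partial traces, and—crucially for uniqueness—the linear‑algebra fact that $V_1\otimes I_{n_2}=I_{n_1}\otimes V_2$ forces both factors to be the \emph{same} scalar multiple of the identity; the rest is routine.
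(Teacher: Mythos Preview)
Your proposal is correct and follows essentially the same strategy as the paper: strict convexity and blow-up of the barrier give existence, uniqueness, and the strict inequality in (a); Lagrange multipliers yield the form \eqref{qbBPOTus}; and the shift $(U_1,U_2)\mapsto(U_1+tI_{n_1},U_2-tI_{n_2})$ fixes the normalization $\tr U_1=\tr U_2$. The only differences are in bookkeeping: the paper performs the Lagrange-multiplier calculation entrywise in index notation rather than via your trace-duality identity $\tr(U_i\,\tr_{\hat i}\rho)=\tr(\hat U_i\rho)$, and for uniqueness the paper first observes that $U_1\otimes I_{n_2}+I_{n_1}\otimes U_2=0$ forces $U_1,U_2$ to be diagonal and then appeals to the classical scalar case, whereas your partial-trace argument handles it directly and self-containedly.
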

\begin{proof}(a)  As $-\log\det \rho$ is strictly convex in $\Gamma_o(R)$ and is $\infty$ on $\partial \Gamma(R)$, it follows that $\kappa_{\beta}(C,R)$ as achieved at the unique $\rho^{\star}$.   Clearly $\tr C\rho^\star\ge \kappa(C,R)$.
Since $\rho^{\star}\in \Gamma_o(R)$ we have that $\sum_{k=1}^{n_1n_2} \lambda_k(\rho^\star)=1$ we deduce $0<\det \rho^\star<1$.  Hence, $\beta(\rho^\star)>0$.  As $\varepsilon>0$ we obtain the inequality \eqref{bqtauin}.

\noindent
(b) We apply the Lagrange multipliers to give the neccessary conditions for $\rho^\star$.   
Denote the entries of $\rho\in\Gamma(R), \rho_1,\rho_2$ by 
\begin{equation*}
\rho=[\rho_{(i_1,j_1)(i_2,j_2)}], \rho_1=[\rho_{i_1,i_2,1}], \rho_2=[\rho_{j_1,j_2,2}],, i_1,i_2\in[n_1], j_1,j_2\in[n_2].
\end{equation*}
The linear conditions for $\tr_2\rho=\rho_1$ and $\tr_1\rho =\rho_2$ can be expressed as follows:
\begin{equation*}
\begin{aligned}
&-\rho_{i_1,i_2,1}+\sum_{j=1}^{n_2} \rho_{(i_1,j)(i_2,j)}=0, \quad i_1,i_2\in[n_1],\\
&-\rho_{j_1,j_2,1}+\sum_{i=1}^{n_1}\rho_{(i,j_1)(i,j_2)}=0, \quad j_1,j_2\in[n_2].
\end{aligned}
\end{equation*}
Taking in account that $\rho,\rho_1,\rho_2$ are Hermitian, the minimizing function for $\kappa_{\beta}(C,R,\varepsilon)$ with Lagrange mulipliers can be written as
\begin{equation*}
\begin{aligned}
&\left(\sum_{i_1=i_2=j_1=j_2=1}^{n_1,n_1,n_2,n_2} c_{(i_1,j_1)(i_2,j_2)}\rho_{(i_2,j_2)(i_1,j_1)}\right)-\varepsilon \log\det \rho\\
&-\sum_{i_1=i_2=1}^{n_1}\lambda_{i_2,i_1}\left(-\rho_{i_1,i_2,1}+\sum_{j=1}^{n_2} \rho_{(i_1,j)(i_2,j)}\right)\\
&-\sum_{j_1=j_2=1}^{n_2}\mu_{j_2,j_1}\left(-\rho_{j_1,j_2,2}+\sum_{i=1}^{n_1} \rho_{(i,j_1)(i,j_2)}\right),\\
&\Lambda=[\lambda_{i_1,i_2}]\in \rH_{n_1},   M=[\mu_{j_1,j_2}]\in \rH_{n_2}.
\end{aligned}
\end{equation*}
Taking the derivative with respect $\rho_{(i_1,j_1)(i_2,j_2)}$ and setting it to zero, and using \eqref{derlogdet}, we deduce
\begin{equation*}
\begin{aligned}
\varepsilon((\rho^\star)^{-1})_{(i_1,j_1)(i_2,j_2)}=\begin{cases}
c_{(i_1,j_1)(i_2,j_2)} \textrm{ if } i_1\ne i_2 \textrm{ and }j_1\ne j_2,\\
c_{(i,j_1)(i,j_2)}-\mu_{j_1,j_2} \textrm{ if } j_1\ne j_2,\\
c_{(i_1,j)(i_2,j)}-\lambda_{i_1,i_2} \textrm{ if } i_1\ne i_2,\\
c_{(i,j)(i,j)}-\lambda_{i,i}-\mu_{j,j}
\end{cases}\\
\end{aligned}
\end{equation*}
That is, 
$$\varepsilon(\rho^\star)^{-1}=C-\Lambda\otimes I_{n_2}-I_{n_1}\otimes M\Rightarrow \rho^\star=\varepsilon (C-\Lambda\otimes I_{n_2}-I_{n_1}\otimes M)^{-1}.$$
This proves \eqref{qbBPOTus}.   Observe that 
$$U_1\otimes I_{n_2}+I_{n_1}\otimes U_2=(U_1+tI_{n_1})\otimes I_{n_2}+I_{n_1}\otimes (U_2-tI_{n_2}),  \forall t\in\R.$$
Choose $t=\frac{\tr U_2-\tr U_1}{n_1+n_2}$.   Then,  $\tr(U_1+tI_{n_1})=\tr(U_2-tI_{n_2})$. It is left to show that there exists a unique $(U_1,U_2)$ satisfying \eqref{qbBPOTus} satisfying $\tr U_1=\tr U_2$.
This is equivalent to the implication
\begin{equation}\label{U1U2imp}
(\tr U_1=\tr U_2)\wedge (U_1\otimes I_{n_2}+I_{n_1}\otimes U_2=0)\Rightarrow (U_1=0)\wedge(U_2=0).
\end{equation}
Observe that second condition yields that $U_1$ and $U_2$ diagonal.  Furthermore, the second condition yields that $u_{i,i,1}+u_{j,j,2}=0$ for $i\in[n_1], j\in[n_2]$.
As $\tr U_1=\tr U_2$ we deduce as in the proof of part (b) of Lemma \ref{minubcbpotplem} that $U_1=0, U_2=0$.
\end{proof}
{\it Proof of Theorem \ref{dbqBPOTP} .}   As $\log\det Z$ is striclty concave on $\rH_{(n_1,n_2),++}$ it follows that $\varphi(X,Y,C)$ is concave on $\hat\rD$.  To show that $\varphi$ is strictly concave on $\rD$ we need to show that if $\rC(X,Y)=\rC(X_1,Y_1)$ for $(X,Y),(X_1,Y_1)\in\rD$, then $(X,Y)=(X_1,Y_1)$.
This is equivalent to the implication \eqref{U1U2imp}, which is shown in the proof of Lemma \ref{minubqbpotplem}. 

Consider the minimal $\rho^\star\in \Gamma_o(R)$ of the form \eqref{qbBPOTus}.
We claim that that $C(U_1,U_2)$ is a critical point of $\varphi(X,Y,C)$ in $\hat\rD$.  Assume that $X=[x_{i_1,i_2}]\in\rH_{n_1}, Y=[y_{j_1,j_2}], (X,Y)$ in a neighborhood of $(U_1,U_2)$.  The  branch of $\log \det C(X,Y)$ is determined by $\log\det C(U_1,U_2)\in\R$. Then
\begin{equation*}
\begin{aligned}
&\frac{\partial \varphi(X,Y,C)}{\partial x_{i_1,i_2}}=\rho_{i_2,i_1,1}-\varepsilon\sum_{j=1}^{n} ((C(X,Y)^\top)^{-1})_{(i_1,j)(i_2,j)}=\\
&\rho_{i_2,i_1,1}-\varepsilon\sum_{j=1}^{n} (C(X,Y)^{-1})_{(i_2,j)(i_1,j)}, i_1,i_2\in[n_1],\\
&\frac{\partial \varphi(X,Y,C)}{\partial y_{j_1,j_2}}=\rho_{j_2,j_1,2}-\varepsilon\sum_{j=1}^{n} ((C(X,Y)^\top)^{-1})_{(i,j_2)(i,j_1)}=\\
&\rho_{j_2,j_1,2}-\varepsilon\sum_{j=1}^{n} (C(X,Y)^{-1})_{(i,j_2)(i,j_1)}, j_1,j_2\in[n_1].
\end{aligned}
\end{equation*}
Set $(X,Y)=(U_1,U_2)$ and use the equality \eqref{qbBPOTus} to deduce that $(U_1,U_2)$ is a critical point $\varphi(X,Y,C)$ in $\hat\rD$.  As $\varphi$ is concave in $\hat \rD$ it follows that $(U_1,U_2)$ is a maximum point of $\varphi$ in $\hat\rD$.   Let $L$ be the line given by \eqref{defqLBP}.  Then $C(X,Y)$ restricted to $L$ is $C(U_1,U_2)$.  Therefore,  $\varphi(X,Y,C)$ achieves its maximum at $L$.  As in the proof of Lemma \ref{minubqbpotplem} it follows, there exists a unique point such that $L\cap \rD=(U_1^\star,U_2^\star)$. As $\varphi$ is strictly concave on $\rD$ the point $(U_1^\star,  U_2^\star)$ is the unique maximum point of $\varphi$ in $\rD$.  Hence, the set of maximal points of $\varphi$ in $\hat\rD$ is $L$.  

It is left to show that $\varphi(U_1,U_2,C)=\kappa_{\beta}(R,C,\varepsilon)$, which is equivalent to  \eqref{maxqfBOTP}.
Finally, observe
\begin{equation}\label{dkapRC}
\begin{aligned}
&\kappa_{\beta}(R,C,\varepsilon)= 
\tr C\rho^\star-\varepsilon\log\det\rho^\star=\\
&\tr \big(C(U_1,U_2)+U_1\otimes I_{n_2}+I_{n_1}\otimes U_2)\rho^\star-\\
&\varepsilon\log\det\big(\varepsilon C(U_1,U_2)^{-1}\big)=\\
&\tr \big(C(U_1,U_2)\rho^\star +\tr(U_1\otimes I_{n_2}+I_{n_1}\otimes U_2)\rho^\star-\\&\varepsilon\log(\varepsilon^{n_1n_2}(\det C(U_1,U_2)^{-1}))\\
&=\varepsilon\tr I_{n_1}\otimes I_{n_2} +\tr U_1\rho_1+\tr U_2\rho_2-\\
&n_1n_2\varepsilon\log \varepsilon+\varepsilon\log\det C(U_1,U_2)=\\
&\tr \rho_1 U_1+\tr \rho_2 U_2 +\varepsilon\log\det C(U_1,U_2) +\varepsilon n_1n_2(1-\log\varepsilon)=\\
&\varphi(U_1,U_2,C)=\max_{(X,Y)\in\rD}\varphi(X,Y,C).
\end{aligned}
\end{equation}
\qed
\subsection{The interior point method for finding a barrier approximation of $\kappa(C,R)$}\label{subsec:ipmqBP}
In what follows we need the following well known inequalities \cite[Theorem 4.9.1]{Frib}:
\begin{equation}\label{tracein}
 \sum_{j=1}^n \lambda_j(A)\lambda_{n-j+1}(B)\le\tr AB\le \sum_{j=1}^n \lambda_j(A)\lambda_j(B) \textrm{ for } A,B\in\rH_n.
\end{equation}
In particular, one has the following inequality
\begin{equation}\label{tracein}
\lambda_{\min}(A)\le \tr AB\le \lambda_{\max}(A) \textrm{ for } A\in\rH_n, B\in\rH_{n,+,1}.
\end{equation}
The following lemma is an analog of Lemma \ref{maxu1u2est}: 
\begin{lemma}\label{maxqbest}
Let $2\le n_1,n_2\in\N$, and $0\ne C\in\rH_{(n_1,n_2)}$.   
\begin{enumerate}[(a)]
\item Consider the minimum problem \eqref{qotp}.
Then
\begin{equation}\label{lbtau}
 \kappa(C,R)\ge \lambda_{\min}(C).
\end{equation}
\item Assume that $C(X,Y), \hat \rD,\rD$ are defined in \eqref{defqDhD}.   
The dual maximum problem to \eqref{qotp} is given by 
\begin{equation}\label{dqBOTP}
\begin{aligned}
&\kappa(C,R)=\sup_{(X,Y)\in\rH_{n_1}\times \rH_{n_2}, C(X,Y)\succeq 0}\tr\rho_1 X+\tr\rho_2 Y=\\
&\sup_{(X,Y)\in\hat\rD} \tr\rho_1 X+\tr\rho_2 Y=
\sup_{(X,Y)\in\rD} \tr\rho_1 X+\tr\rho_2 Y.
\end{aligned}
\end{equation} 
\item  Let $\rho_i\in\rH_{n_i,++,1}, i\in[2]$.   Suppose that
\begin{equation}\label{XYlbin}
(C(X,Y)\succeq 0)\wedge(\tr\rho_1 X+\tr\rho_2 Y\ge \lambda_{\min}(C))\wedge(\tr X=\tr Y).
\end{equation}
Then
\begin{equation}\label{XYineq}
\begin{aligned}
&\|(X,Y)\|_F=\sqrt{\|X\|_F^2+\|Y\|_F^2}<
\sqrt{\max(n_1,n_2)}\frac{2\|C\|_2}{\lambda_{\min}(\rho_1)\lambda_{\min}(\rho_2)},\\
&-\frac{2\|C\|_2}{\lambda_{\min}(\rho_1)\lambda_{\min}(\rho_2)}\le x_k+y_l\le \|\C\|_2.
\end{aligned}
\end{equation}
\item Let
\begin{equation}\label{X0Y0qBP}
X_0=\frac{t}{n_1}I_{n_1}, Y_0=\frac{t}{n_2}I_{n_2}, \quad t=\frac{n_1n_2(-1+\lambda_{\min}(C))}{n_1+n_2}.
\end{equation}
Then $(X_0,Y_0)\in\rD$. 
\item  Let 
\begin{equation}\label{defrqBP}
r^2=\max(n_1,n_2)\frac{9\|C\|_2^2+1}{(\lambda_{\min}(\rho_1)\lambda_{\min}(\rho_2))^2}.
\end{equation}
The interior of the ball
$\rB((X_0,Y_0),r)=\{\|X-X_0\|_F^2 +\|Y-Y_0\|_F^2\le  r^2\},$
denoted as $\rB_o((X_0,Y_0),r)$, contains all $(X,Y)$ that satisfies the conditoins  \eqref{XYlbin}.  In particular,  the three suprema of \eqref{dqBOTP} are achieved. 
\item Let $\rD_1=\rD\cap   \rB_o((X_0,Y_0),r)$.  Then for any boundary point $(X,Y)$ of $\rD_1$ the inequality $\|(X_0,Y_0)-(X,Y)\|_F\ge \frac{1}{\sqrt{2}}$ holds.
\item The function
\begin{equation}\label{hbqBT}
\begin{aligned}
&\hat\beta(X,Y)=-\log\det C(X,Y)\\
&-\log\left(r^2-\|X-X_0\|_F^2-\|Y-Y_0\|_F^2\right)+\log r^2
\end{aligned}
\end{equation}
is a barrier function for the convex domain $\rD_1$ with $\theta(\hat\beta)\le n_1n_2+1$.
\end{enumerate}
\end{lemma}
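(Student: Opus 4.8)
The plan is to follow the blueprint of the classical Lemma \ref{maxu1u2est}, replacing matrix entries by eigenvalues and Rayleigh quotients, elementary arithmetic by the trace inequalities \eqref{tracein}, and LP duality by SDP duality; the work concentrates in part (c). For (a), I would note that $\Gamma(R)$ lies inside the set of density matrices on $\C^{\bn}$ and that \eqref{tracein} gives $\min\{\tr C\rho:\rho\succeq 0,\ \tr\rho=1\}=\lambda_{\min}(C)$, so $\kappa(C,R)\ge\lambda_{\min}(C)$. For (b), the first equality of \eqref{dqBOTP} is SDP strong duality, which is \cite[Theorem 3.2]{CEFZ23}; for the remaining two I would observe that $(X,Y)\mapsto(X+tI_{n_1},Y-tI_{n_2})$ changes neither $C(X,Y)$ nor $\tr\rho_1 X+\tr\rho_2 Y$ (since $t\tr\rho_1-t\tr\rho_2=0$), so the supremum over $\hat\rD$ equals that over $\rD$, and replacing $\succeq 0$ by $\succ 0$ leaves the supremum unchanged by continuity. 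For (d), I would just compute $C(X_0,Y_0)=C-\big(\tfrac{t}{n_1}+\tfrac{t}{n_2}\big)I_{n_1}\otimes I_{n_2}=C+(1-\lambda_{\min}(C))\,I_{n_1}\otimes I_{n_2}\succeq I_{n_1}\otimes I_{n_2}\succ 0$ and $\tr X_0=\tr Y_0=t$, so $(X_0,Y_0)\in\rD$.

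Part (c) is where the real work lies, and I expect it to be the main obstacle. From $C(X,Y)\succeq 0$, i.e.\ $X\otimes I_{n_2}+I_{n_1}\otimes Y\preceq C$, testing against product vectors $u\otimes w$ with $u,w$ eigenvectors of $X,Y$ yields $\lambda_i(X)+\lambda_j(Y)\le\lambda_{\max}(C)$ for all $i,j$, in particular $\lambda_{\max}(X)+\lambda_{\max}(Y)\le\lambda_{\max}(C)$. I would then write $\rho_1=\sum_i q_i u_iu_i^*$, $\rho_2=\sum_j q'_j w_jw_j^*$ with $q_i\ge\lambda_{\min}(\rho_1)$, $q'_j\ge\lambda_{\min}(\rho_2)$, and combine the hypothesis $\tr\rho_1 X+\tr\rho_2 Y\ge\lambda_{\min}(C)$ with \eqref{tracein}, isolating a single summand as in the classical ``isolate the entry $(p,q)$'' step, to bound the spectral spreads of $X$ and of $Y$. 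Together with $\lambda_{\min}(C)\le\lambda_{\max}(X)+\lambda_{\max}(Y)\le\lambda_{\max}(C)$ this should give $|\lambda_k(X\otimes I_{n_2}+I_{n_1}\otimes Y)|$ under control for every $k$, hence $\|X\otimes I_{n_2}+I_{n_1}\otimes Y\|_F^2<n_1n_2\big(2\|C\|_2/(\lambda_{\min}(\rho_1)\lambda_{\min}(\rho_2))\big)^2$. The balance condition $\tr X=\tr Y$ would then enter exactly as in the classical case, via the quantum analog of Lemma \ref{x1xdqfin}: since $\|X\otimes I_{n_2}+I_{n_1}\otimes Y\|_F^2=n_2\|X\|_F^2+2(\tr X)(\tr Y)+n_1\|Y\|_F^2\ge\min(n_1,n_2)\big(\|X\|_F^2+\|Y\|_F^2\big)$ when $\tr X=\tr Y$, dividing out produces the factor $\sqrt{\max(n_1,n_2)}$ and yields \eqref{XYineq}. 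The delicate point is the passage from the information in $C(X,Y)\succeq 0$, which lives at the level of product-state expectations, to genuine norm bounds on $X$ and $Y$: in the commutative case the pointwise inequality $x_{j_1}+y_{j_2}\le c_{j_1,j_2}$ does this for free, whereas here I would have to bookkeep eigenvalue spreads, and that is the step requiring care.

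Granting (c), parts (e)--(g) should be routine adaptations. For (e), $\|X-X_0\|_F^2+\|Y-Y_0\|_F^2\le 2\big(\|X\|_F^2+\|X_0\|_F^2+\|Y\|_F^2+\|Y_0\|_F^2\big)$ together with \eqref{XYineq} and $\|X_0\|_F^2=t^2/n_1$, $\|Y_0\|_F^2=t^2/n_2$ is $<r^2$ for $r$ as in \eqref{defrqBP}; since the feasible pairs with objective $\ge\lambda_{\min}(C)$ then lie in a compact subset of $\{C(X,Y)\succeq 0\}$, and restricting to them does not change the value (the supremum equals $\kappa(C,R)\ge\lambda_{\min}(C)$ by (a) and (b)), all three suprema in \eqref{dqBOTP} are attained. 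For (f), a boundary point of $\rD_1$ either lies on the sphere of radius $r\ge 1$ about $(X_0,Y_0)$, or makes $C(X,Y)$ singular, so $C(X,Y)\psi=0$ for some unit $\psi$; using $\langle\psi|(X_0\otimes I_{n_2}+I_{n_1}\otimes Y_0)|\psi\rangle=t\big(\tfrac{1}{n_1}+\tfrac{1}{n_2}\big)=-1+\lambda_{\min}(C)$ and $\langle\psi|C|\psi\rangle\ge\lambda_{\min}(C)$ I would get $\langle\psi|\big((X-X_0)\otimes I_{n_2}+I_{n_1}\otimes(Y-Y_0)\big)|\psi\rangle\ge 1$, while $|\langle\psi|(X-X_0)\otimes I_{n_2}|\psi\rangle|=|\tr[(\tr_2\psi\psi^*)(X-X_0)]|\le\|X-X_0\|_F$ because $\tr_2\psi\psi^*$ is a density matrix of Frobenius norm $\le 1$, and similarly for $Y$; hence $1\le\|X-X_0\|_F+\|Y-Y_0\|_F\le\sqrt2\,\|(X,Y)-(X_0,Y_0)\|_F$. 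For (g), $-\log\det C(X,Y)$ is the standard $-\log\det$ barrier on $\rH_{n_1n_2,++}$ (parameter $n_1n_2$) precomposed with an affine map and restricted to the subspace $\{\tr X=\tr Y\}$, and $-\log(r^2-\|X-X_0\|_F^2-\|Y-Y_0\|_F^2)$ is the standard ball barrier (parameter $1$); their sum, up to the harmless constant $\log r^2$, is a self-concordant barrier for $\rD_1$ with $\theta(\hat\beta)\le n_1n_2+1$ \cite{NN94}.
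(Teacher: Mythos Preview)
Your proposal is correct and largely parallels the paper, but there is one organizational difference worth pointing out. For part (c) you outline a direct quantum argument: test $C(X,Y)\succeq 0$ on product eigenvectors, expand $\tr\rho_iX_i$ in the $X_i$-eigenbases, then rerun the classical ``isolate the $(p,q)$ entry'' chain of inequalities. The paper instead reduces to the classical Lemma \ref{maxu1u2est} in one stroke: write $X=Q_X\diag(\lambda(X))Q_X^*$, $Y=Q_Y\diag(\lambda(Y))Q_Y^*$, set $C'=(Q_X\otimes Q_Y)^*C(Q_X\otimes Q_Y)$, $\rho_i'=Q_{X_i}^*\rho_iQ_{X_i}$, and observe that for the diagonal $X',Y'$ the conditions \eqref{XYlbin} become exactly the classical conditions \eqref{xylbin} for the \emph{diagonal entries} of $C'$ and of $\rho_i'$; Cauchy interlacing gives $|c'_{j_1,j_2}|\le\|C\|_2$ and $\min_{j_i}(\rho_i')_{j_i,j_i}\ge\lambda_{\min}(\rho_i)$, so \eqref{u1u2ineq} applies verbatim. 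This is precisely your computation after a change of basis, so there is no mathematical gap; but the paper's packaging dissolves the ``delicate point'' you flag, since the passage from $C(X,Y)\succeq 0$ to pointwise bounds is just the nonnegativity of the diagonal of a positive semidefinite matrix. One small correction: when you write $\rho_1=\sum_i q_i u_iu_i^*$, $\rho_1$ need not be diagonal in the $X$-eigenbasis; what you actually use is $\tr\rho_1X=\sum_i q_i\lambda_i(X)$ with $q_i=\langle u_i,\rho_1 u_i\rangle\ge\lambda_{\min}(\rho_1)$, which is exactly the diagonal of $\rho_1'$ in the paper's notation.

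For (f) you and the paper diverge slightly in technique: the paper uses Weyl's inequality to get $\lambda_{\max}\big((X-X_0)\otimes I_{n_2}+I_{n_1}\otimes(Y-Y_0)\big)\ge\lambda_{\min}(C(X_0,Y_0))\ge 1$, then the explicit eigenvalue structure $\lambda_{j_1}(X-X_0)+\lambda_{j_2}(Y-Y_0)$ of a Kronecker sum, whereas you test on a null vector of $C(X,Y)$ and bound via partial traces and the Frobenius Cauchy--Schwarz inequality. Both arguments are valid and yield the same $\tfrac{1}{\sqrt2}$ bound. Parts (a), (b), (d), (e), (g) match the paper's proof.
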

\begin{proof} (a) follows from \eqref{tracein}.

\noindent
(b) The first equality in \eqref{dqBOTP} is proved in \cite[Theorem 3.2]{CEFZ23}.
The second equality is straightforward.   Clearly, $C(X,Y)=C(X+tI_{n_1},Y-tI_{n_2})$.
As in the proof of Lemma \ref{minubqbpotplem} choose $t=\frac{\tr U_2-\tr U_1}{n_1+n_2}$ to deduce the third equality.

\noindent (c)  Denote
\begin{equation}\label{defdiagx}
\diag(\x):=\begin{bmatrix}x_1&0&\cdots&0&0\\0&x_2&\cdots&0&0\\\vdots&\vdots&\vdots&\vdots&\vdots\\0&0&\cdots&0&x_n\end{bmatrix}, \quad
\x=(x_1,\ldots,x_n)^\top \in\R^n.
\end{equation}
Assume that $X=\diag(\x),Y=\diag(\y)$.  Denote by $\diag(C)\in\rH_{(n_1,n_2)}$ the diagonal matrix obtained from $C$ by deleting the off-diagonal entries of $C$.
Name the diagonal entry of $\diag(C)$ in the row and column $(j_1,j_2)$ by $c_{j_1,j_2}$, and let $\hat C=[c_{j_1,j_2}]\in\R^{n_1\times n_2}$.   Recall the well known inequalities following from the Cauchy interlacing inequalities \cite[Section 4.4, Problem (3a)]{Frib}:
\begin{equation*}
\begin{aligned}
&\lambda_{\min}(C)\le c_{\min}= \min_{j_1\in[n_1],j_2\in[n_2]} c_{j_1,j_2}\le \max_{j_1\in[n_1],j_2\in[n_2]}
 c_{j_1,j_2}= c_{\max}\le \lambda_{\max}(C)\\
 &\Rightarrow c:=\|C\|_2\ge \max_{j_1\in[n_1],j_2\in[n_2]}|c_{j_1,j_2}|.
 \end{aligned}
 \end{equation*}
 Assume that the conditions of the implication in \eqref{XYlbin} hold for $X=\diag(\x),Y=\diag(\y)$ hold.  
 Restricting these conditions to the diagonal entries of $C(X,Y)$ we obtain the conditions:
 \begin{equation*}
 c_{j_1,j_2}-x_{j_1}-y_{j_2}\ge 0, j_1\in[n_1], j_2\in[n_2],  \bp_1^\top\x+\bp_2^\top\y\ge \lambda_{min}(C),\1_{n_1}^\top\x=\1_{n_2}^\top \y,
 \end{equation*}
 where $\bp_i$ are the diagonal entries of $\rho_i$ for $i\in[2]$.
 Observe that the first set of the above inequalities yield $\tr \hat C^\top \bp_{1}\bp_{2}^\top\ge \bp_{1}^\top\x+\bp_{2}^\top\y\ge \lambda_{min}(C)$. 
 Hence, the following inequaltiy holds:
 \begin{equation*}
\begin{aligned}
&\lambda_{\min}(C)\le \bp_1^\top \x+  \bp_2^\top \y=\sum_{j_1=j_2=1}^{n_1,n_2} p_{j_1,1}p_{j_2,2}(x_{j_1}+y_{j_2})\le\\
&p_{1,k}p_{2,l}(x_k+y_l)+\sum_{(j_1,j_2)\in[n_1]\times [n_2]\setminus\{(k,l)\}}p_{j_1,1}p_{j_2,2} c_{j_1,j_2}=\\
&p_{1,k}p_{2,l}(x_k+y_l)-p_{1,k}p_{2,l}c_{k,l}+\tr C^\top(\bp_1\bp_2^\top).
\end{aligned}
\end{equation*}
Combine above conditions and the inequality to obtain:
\begin{equation}\label{p1p2ineq}
\begin{aligned}
&0\le c_{k,l}- x_k-y_l\le \frac{\tr C^\top(\bp_1\bp_2^\top)-\lambda_{\min}(C)}{p_{1,k}p_{2,l}}\le\\
&\frac{\lambda_{\max}(C)-\lambda_{min}(C)}{p_{1,k}p_{2,l}}\le\frac{2c}{p_{1,k}p_{2,l}}\le \frac{2c}{\lambda_{\min}(\rho_1)\lambda_{\min}(\rho_2)},\\
&-\frac{2c}{p_{1,k}p_{2,l}}<\frac{c_{k,l}p_{1,k}p_{2,l}-\tr C^\top(\bp_1\bp_2^\top)+\lambda_{min}(C)}{p_{1,k}p_{2,l}}\le x_k+y_l.
\end{aligned}
\end{equation}
 The Cauchy interlacing property yields that 
 $\min_{j_i\in[n_i]}\bp_i\ge \lambda_{\min}(\rho_i)$ for $i\in[2]$.  
 This proves the above inequalities.
 Combine the above inequalities to deduce \eqref{XYineq} in this case.
 
 Assume the conditions \eqref{XYlbin}.     Let 
 $$X=Q_X\diag(\lambda(X))Q_X^*, \quad  Y=Q_Y\diag(\lambda(X))Q_Y^*, $$ where $Q_X,Q_Y$ are corresponding unitary matrices.  Denote
 \begin{equation*}
 \rho_1'=Q_X^*\rho_1 Q_X, \quad \rho_2'=Q_Y^*\rho_2 Q_Y, \quad C'=(Q_X^*\otimes Q_Y^*)C(Q_X\otimes Q_Y).
 \end{equation*}
 Observe that $\rho_1',\rho_2',C'$ are cospectral with $\rho_1,\rho_2,C$.  Furthermore,
 $\kappa(C,R)=\kappa(C',R'), R'=(\rho_1',\rho_2')$ \cite[Corollary A.2]{CEFZ23}.   Observe that $X'=\diag(\lambda(X)),  Y'=\diag(\lambda(Y))$ satisfy the conditions 
 \begin{equation*}
 (C'(X',Y)'\succeq 0)\wedge(\tr\rho_1' X'+\tr\rho_2' Y'\ge \lambda_{\min}(C'))\wedge(\tr X'=\tr Y').
 \end{equation*}
 As $X',Y'$ are diagonal matrices we deduce the inequality \eqref{XYineq} from the previous case.
 
 \noindent (d)  Observe that 
\begin{equation}\label{lamCXY0}
\begin{aligned}
&C(X_0,Y_0)=C-\lambda_{\min}(C) I_{n_1}\otimes I_{n_2} +I_{n_1}\otimes I_{n_2}\succeq \\
&I_{n_1}\otimes I_{n_2}\Rightarrow
\lambda_{\min}(C(X_0,Y_0))\ge 1.
\end{aligned}
\end{equation}
(e)  As in the proof of part (c),  considering $\rho_1',\rho_2',C',X',Y'$, where $X',Y'$ are diagonal.  Observe $X_0'=X_0,  Y_0'=Y_0$.  Deduce (e) from part (e) of Lemma \ref{maxu1u2est}.  
 
\noindent (f) 
Observe that  $(X,Y)$ is on the boundary of $D_1$ if either $\|X-X_0\|_F^2+ \|Y-Y_0\|_F^2=r^2$ or $\lambda_{\min}(C(X,Y))=0$.  As $r>1$ it is enough to consider the case $\lambda_{\min}(C(X,Y))=0$.   Use the Weyl's inequalities \cite[Theorem 4.4.6]{Frib} and 
\eqref{lamCXY0} to deduce
\begin{equation*}
\begin{aligned}
&\lambda_{\max}((X-X_0)\otimes I_{n_2}+I_{n_1}\otimes (Y-Y_0))=\\
&\lambda_{\max}((X-X_0)\otimes I_{n_2}+I_{n_1}\otimes (Y-Y_0)) +\lambda_{\min}(C(X,Y))\ge\\ &\lambda_{\min}((X-X_0)\otimes I_{n_2}+I_{n_1}\otimes (Y-Y_0)+C(X,Y))=\\
&\lambda_{\min}(C(X_0,Y_0))\ge 1.
\end{aligned}
\end{equation*}
The arguments in the second part of (c)  yield that the eigenvalues of $(X-X_0)\otimes I_{n_2}+I_{n_1}\otimes (Y-Y_0)$ are $\lambda_{j_1}(X-X_0)+\lambda_{j_2}(Y-Y_0)$ for $j_1\in[n_1],j_2\in[n_2]$.  Hence
\begin{equation*}
\begin{aligned}
&2(\|X-X_0\|^2_F+\|Y-Y_0\|_F^2)\ge 2(\lambda_{\max}^2(X-X_0)+\lambda_{\max}^2(Y-Y_0))\ge\\ 
&(\lambda_{\max}(X-X_0)+\lambda_{\max}(Y-Y_0))^2=\\
&\lambda_{\max}^2((X-X_0)\otimes I_{n_2}+I_{n_1}\otimes (Y-Y_0))\ge 1.
\end{aligned}
\end{equation*}
(g) Recall that $\theta(-\log \det A)=n$ for $A\in\rH_{n,++}$ \eqref{-logdetX}.  Hence, 
\begin{equation*}
\begin{aligned}
&\theta(-\log\det C(X,Y))\le n_1n_2\textrm{ for }(X,Y)\in\rD_1,\\
&\theta(\hat\beta)\le n_1n_2+1.
\end{aligned}
 \end{equation*}
\end{proof}
The following proposition is an analog of \eqref{bqkapin} and \eqref{bbtauin}:
\begin{proposition}\label{kapbepsest}  Let $2\le n_1,n_2\in\N$, and $0\ne C\in\rH_{(n_1,n_2)}$.  Assume that $\rho_i\in\rH_{n_i,++,1}, i\in[2]$ , and $\varepsilon>0$.  Then 
\begin{equation}\label{bbqtauin}
\begin{aligned}
&\kappa(C,R)<\kappa_{\beta}(C,R,\varepsilon)<\kappa(C,R) +n_1n_2\varepsilon(1-\log\varepsilon)+\\
&n_1n_2\varepsilon\log \left(\frac{2\|C\|_2}{\lambda_{n_1}(\rho_1)\lambda_{n_2}(\rho_2)}\right)
\end{aligned}
\end{equation}
\end{proposition}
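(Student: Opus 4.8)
The left inequality $\kappa(C,R)<\kappa_\beta(C,R,\varepsilon)$ is exactly \eqref{bqtauin} in part (a) of Lemma \ref{minubqbpotplem}, so the whole content is the right inequality, and my plan is to mirror the proof of Proposition \ref{tauepsest}, replacing the classical dual identity of Theorem \ref{dbcBPOTP} by the quantum one. Let $(U_1,U_2)\in\rD$ be the unique maximizer of $\varphi(X,Y,C)$ furnished by Theorem \ref{dbqBPOTP}, and let $\rho^\star=\varepsilon\,C(U_1,U_2)^{-1}\in\Gamma_o(R)$ be the associated barrier-optimal coupling from \eqref{rhostarex}, \eqref{qbBPOTus}. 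The chain of equalities \eqref{dkapRC} gives
\[
\kappa_\beta(C,R,\varepsilon)=\tr\rho_1 U_1+\tr\rho_2 U_2+\varepsilon\log\det C(U_1,U_2)+n_1n_2\varepsilon(1-\log\varepsilon),
\]
and, since $(U_1,U_2)$ is feasible for the dual problem \eqref{dqBOTP}, we have $\tr\rho_1 U_1+\tr\rho_2 U_2\le\kappa(C,R)$. Subtracting,
\[
\kappa_\beta(C,R,\varepsilon)-\kappa(C,R)-n_1n_2\varepsilon(1-\log\varepsilon)\le\varepsilon\log\det C(U_1,U_2),
\]
so it remains to bound $\varepsilon\log\det C(U_1,U_2)$ from above.

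Since $C(U_1,U_2)=\varepsilon(\rho^\star)^{-1}\succ 0$, all its eigenvalues are positive, and applying the AM--GM inequality to the positive numbers $\lambda_k(C(U_1,U_2))^2$ gives
\[
\varepsilon\log\det C(U_1,U_2)=\frac{\varepsilon}{2}\log\prod_{k=1}^{n_1n_2}\lambda_k(C(U_1,U_2))^2\le\frac{n_1n_2\varepsilon}{2}\log\frac{\|C(U_1,U_2)\|_F^2}{n_1n_2}.
\]
I then estimate $\|C(U_1,U_2)\|_F^2=\|C-U_1\otimes I_{n_2}-I_{n_1}\otimes U_2\|_F^2\le 2\|C\|_F^2+2\|U_1\otimes I_{n_2}+I_{n_1}\otimes U_2\|_F^2$, using $\|C\|_F^2\le n_1n_2\|C\|_2^2$ for the first term. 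For the second term I use that the eigenvalues of $U_1\otimes I_{n_2}+I_{n_1}\otimes U_2$ are $\lambda_{j_1}(U_1)+\lambda_{j_2}(U_2)$, $j_1\in[n_1]$, $j_2\in[n_2]$, so its squared Frobenius norm equals $\sum_{j_1,j_2}(\lambda_{j_1}(U_1)+\lambda_{j_2}(U_2))^2$; since $(U_1,U_2)\in\rD$ forces $\tr U_1=\tr U_2$, the vectors $\lambda(U_1),\lambda(U_2)$ satisfy the balance condition \eqref{balancecond} with $d=2$, and Lemma \ref{x1xdqfin} bounds this sum by $2n_2\|U_1\|_F^2+2n_1\|U_2\|_F^2\le 2\max(n_1,n_2)\bigl(\|U_1\|_F^2+\|U_2\|_F^2\bigr)$.

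It remains to bound $\|U_1\|_F^2+\|U_2\|_F^2$, and here I invoke part (c) of Lemma \ref{maxqbest}: the point $(U_1,U_2)$ satisfies $C(U_1,U_2)\succeq 0$ and $\tr U_1=\tr U_2$, while \eqref{dkapRC} also gives $\tr\rho_1 U_1+\tr\rho_2 U_2=\tr C\rho^\star-n_1n_2\varepsilon\ge\lambda_{\min}(C)-n_1n_2\varepsilon$, i.e.\ the hypotheses \eqref{XYlbin} hold up to the harmless additive shift $n_1n_2\varepsilon$ in the constant; hence \eqref{XYineq} yields $\|U_1\|_F^2+\|U_2\|_F^2<\max(n_1,n_2)\bigl(2\|C\|_2/(\lambda_{\min}(\rho_1)\lambda_{\min}(\rho_2))\bigr)^2$. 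Feeding this back through the previous displays (and, to recover the precise constant, using the cross term $2(\tr U_1)^2\ge 0$ in $\sum_{j_1,j_2}(\lambda_{j_1}(U_1)+\lambda_{j_2}(U_2))^2$ together with $\max(n_1,n_2)\le n_1n_2$) collapses the estimate to $\|C(U_1,U_2)\|_F^2\le n_1n_2\bigl(2\|C\|_2^2+2(2\|C\|_2/(\lambda_{\min}(\rho_1)\lambda_{\min}(\rho_2)))^2\bigr)$; the factor $1/(n_1n_2)$ inside the logarithm then cancels this $n_1n_2$, producing the upper bound in \eqref{bbqtauin}.

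The main obstacle is this final bookkeeping step: tracking the various factors of $2$ and of $\max(n_1,n_2)$ versus $n_1n_2$ so that the constant under the logarithm matches \eqref{bbqtauin} exactly, and simultaneously verifying that the barrier-dual optimum $(U_1,U_2)$ genuinely lies in the regime covered by Lemma \ref{maxqbest}(c) — the $n_1n_2\varepsilon$ discrepancy in the inequality $\tr\rho_1 U_1+\tr\rho_2 U_2\ge\lambda_{\min}(C)$ is the only subtle point, and it is absorbed into the constant (or, alternatively, one runs Lemma \ref{maxqbest}(c) with $\lambda_{\min}(C)$ replaced by $\lambda_{\min}(C)-n_1n_2\varepsilon$ and notes the bound is unchanged to leading order). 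Everything else — the AM--GM step, the spectral description of the Kronecker sum $U_1\otimes I_{n_2}+I_{n_1}\otimes U_2$, and the reduction to Lemma \ref{x1xdqfin} — is routine and parallels the classical argument of Proposition \ref{tauepsest}.
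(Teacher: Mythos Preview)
Your proposal is essentially correct and follows the same skeleton as the paper --- start from the dual identity \eqref{dkapRC}, use $\tr\rho_1 U_1+\tr\rho_2 U_2\le\kappa(C,R)$, then bound $\log\det C(U_1,U_2)$ --- but the key technical step is handled differently. The paper first conjugates $U_1,U_2,C$ by unitaries so that $U_1,U_2$ become diagonal, then applies \emph{Hadamard's determinant inequality} to bound $\det C(U_1,U_2)$ by the product of its diagonal entries $\prod_{j_1,j_2}(c'_{j_1,j_2}-x_{j_1}-y_{j_2})$, after which the argument is literally the classical one from Proposition~\ref{tauepsest}. You bypass both the conjugation and Hadamard by applying AM--GM directly to the eigenvalues, which bounds $\det C(U_1,U_2)$ in terms of $\|C(U_1,U_2)\|_F^2$; you then exploit the spectral description of the Kronecker sum to recover the same quantity $\sum_{j_1,j_2}(\lambda_{j_1}(U_1)+\lambda_{j_2}(U_2))^2$ that the paper bounds via the first inequality in \eqref{u1u2ineq}. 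Your route is arguably more direct, at the cost of slightly messier constant-tracking (your detour through Lemma~\ref{x1xdqfin} and the second inequality \eqref{XYineq} loses a factor $\max(n_1,n_2)/\min(n_1,n_2)$; the clean way is to invoke the analogue of the \emph{first} inequality in \eqref{u1u2ineq} directly, exactly as the paper does after Hadamard).

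On the point you flag as the ``main obstacle'': the paper does not verify the hypothesis $\tr\rho_1 U_1+\tr\rho_2 U_2\ge\lambda_{\min}(C)$ either --- it simply writes ``we used the arguments in the proof of part (c) of Lemma~\ref{maxqbest}'' rather than invoking the lemma as a black box. So your concern is legitimate, and your observation that $\tr\rho_1 U_1+\tr\rho_2 U_2=\tr C\rho^\star-n_1n_2\varepsilon\ge\lambda_{\min}(C)-n_1n_2\varepsilon$ is the honest version of what is happening; the paper is informal on exactly this point.
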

\begin{proof}  The first inequality in \eqref{bbqtauin} is \eqref{bqtauin}.
Assume that the minimal solution to \eqref{qbMOTP} is given by \eqref{qbBPOTus}, where $\tr U_1=\tr U_2$.  Now use \eqref{dkapRC} to deduce 
\begin{equation*}
\kappa_{\beta}(R,C,\varepsilon)-\varepsilon n_1n_2(1-\log\varepsilon)=\tr \rho_1 U_1+\tr \rho_2 U_2 +\varepsilon\log\det(C(U,V)).
\end{equation*}
By conjugating $U_1,U_2,C$  by approriate unitary matrices, we can assume, as in the proof part (c) of Lemma \ref{maxqbest},  $U_1=\diag(\x), U_2=\diag(\y)$ 
The maximal characterization of $\kappa(C,R)$ given by \eqref{dqBOTP} yields that 
$\tr \rho_1 U_1+\tr \rho_2 U_2\le \kappa(C,R)$.   As $C(U_1,U_2)$ is a positive definite matrix it follows that $\det C(U_1,U_2)$ is at most the product of the diagonal entires of $C(U_1,U_2)$.  (This is Hadamard's determinant inequality.)
As the proof of part (c) of Lemma \ref{maxqbest} denote  the diagonal entries of $C$ in the row and column $(j_1,j_2)$ by $c_{j_1,j_2}$. 
Let $c= \|C\|_2$.
Use the inequalities \eqref{p1p2ineq} to obtain
\begin{equation*}
\begin{aligned}
&\log\det C(U_1,U_2)\le\log\prod_{j_1=j_2}^{n_1,n_2}\big(c_{j_1,j_2}-(x_{j_1}+y_{j_2})\le \\
&\log \big(\frac{2\|C\|_2}{\lambda_{\min}(\rho_1)\lambda_{\min}(\rho_2)}\big)^{n_1n_2}.
\end{aligned}
\end{equation*}
This establishes \eqref{bbqtauin}.
\end{proof}

In what follows we need the following well lemma, whose  first part is well known:
\begin{lemma}\label{loddetder}
Let $2\le n\in\N$, and $Z=[z_{p,q}]\in\C^{n\times n}$.  
\begin{enumerate} [(a)]
\item  Assume that $\|Z\|_2:=\sqrt{\lambda_{\max}(Z^*Z)} <1$.  Then 
\begin{equation}\label{logdetexp1}
-\log\det (I+Z)=\sum_{j=1}^\infty \frac{(-1)^j\tr Z^j}{j}.
\end{equation}
In particular
\begin{equation}\label{logdetex}
\begin{aligned}
&-\log\det(I+Z)=-\left(\sum_{p=1}^n z_{p,p}\right)+\frac{1}{2}\left(\sum_{p=1}^n z_{p,p}^2\right) +\\
&\left(\sum_{1\le p<q\leq  n} z_{p,q}z_{q,p}\right) + \textrm{ higher order homogeneous polynomials}.
\end{aligned}
\end{equation}
If $Z$ is Hermitian, i.e.  $Z=S+\bi T$, where $S=[x_{p,q}], T=[t_{p,q}]\in\R^{n\times n}$ are symmetric and skew symmetric respectively, then
\begin{equation}\label{logdetexher}
\begin{aligned}
&-\log\det(I+Z)=-\left(\sum_{p=1}^n x_{p,p}\right)+\frac{1}{2}\left(\sum_{p=1}^n x_{p,p}^2\right) +\\
&\left(\sum_{1\le p<1\leq  n} x_{p,q}^2+y_{p,q}^2\right) + \textrm{ higher order homogeneous polynomials}.
\end{aligned}
\end{equation}
\item Assume that $E\in\rH_{n,++}$ and $W=[w_{i,j}]\in\rH_n$, such that $\|W\|_2\le \|E^{-1}\|_2^{-1}$.  Then 
\begin{equation}\label{expldEW}
f(W):=-\log\det(E+W)+\log\det E=-\log\det(I+Z),  Z=E^{-1}W
\end{equation}
The complexity of finding the gradient and the Hessian of $f$ at $W=0$ is 
$O(n^3)$ and $O(n^4)$ respectively.
\item Assume that $(A,B)\in \hat\rD$, and let $C(A,B)^{-1}=[e_{(i,p)(j,q)}]\in\rH_{(n_1,n_2),++}$.   Consider the function 
\begin{equation}\label{f(X,Y)}
f(X,Y)=-\log\det(C(A+X,B+Y)), \quad (X,Y)\in \rH_{n_1}\times \rH_{n_2}
\end{equation}
in the neighborhood of $(0,0)$. Set  $Z=-C(A,B)^{-1}(X\otimes I_{n_2}+I_{n_1}\otimes Y)$.  

Then $f(X,Y)=-\log\det C(A, B) -\log\det (I+Z)$, and 
\begin{equation*}
z_{(i,p)(j,q)}=-\sum_{k=1}^{n_1}e_{(i,p)(k,q)}x_{k,j}-\sum_{l=1}^{n_2}e_{(i,p)(j,l)}y_{l,q}.
\end{equation*}
Then the gradient and the quadratic form corresponding to the Hessian of $f$ at $(0,0)$ are
\begin{equation}\label{drasHesXY}
\begin{aligned}
&\nabla f(X,Y)=(F,G)\in \rH_{n_1,++}\times \rH_{n_2,++}, \\
 &F=[f_{k,i}], f_{k,i}=\sum_{p=1}^{n_2} e_{(i,p)(k,p)}, \quad G=[g_{l,p}], g_{l,p}=\sum_{i=1}^{n_1} e_{(i,p)(i,l)},\\
 &\frac{1}{2}\sum_{i,j\in[n_1], p,q\in[n_2]} \big(\sum_{k=1}^{n_1}e_{(i,p)(k,q)}x_{k,j}-\sum_{l=1}^{n_2}e_{(i,p)(j,l)}y_{l,q}\big)\times\\
& \big(\sum_{k=1}^{n_1}e_{(j,q)(k,p)}x_{k,i}-\sum_{l=1}^{n_2}e_{(j,q)(i,l)}y_{l,p}\big)
\end{aligned}
\end{equation} 
For $n_1=n_2=n$ the complexity of finding the gradient and the Hessian of $f$ at $(0,0)$ is $O(n^6)$.
\end{enumerate}
\end{lemma}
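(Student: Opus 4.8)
\emph{Proof proposal.}

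For part (a) the plan is to reduce to the scalar logarithm series via $\log\det M=\tr\log M$. This identity holds for any $M$ whose spectrum avoids $(-\infty,0]$, since $\det M=\prod_i\lambda_i(M)$ gives $\log\det M=\sum_i\log\lambda_i(M)=\tr\log M$. When $\|Z\|_2<1$ every eigenvalue of $Z$ lies in the open unit disc, so $\log(I+Z)=\sum_{j\ge 1}\frac{(-1)^{j+1}}{j}Z^j$ converges absolutely in operator norm, and taking traces term by term yields $\log\det(I+Z)=\sum_{j\ge1}\frac{(-1)^{j+1}}{j}\tr Z^j$; multiplying by $-1$ is \eqref{logdetexp1}. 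For \eqref{logdetex} I would read off the $j=1,2$ terms using $\tr Z=\sum_p z_{p,p}$ and $\tr Z^2=\sum_{p,q}z_{p,q}z_{q,p}=\sum_p z_{p,p}^2+2\sum_{p<q}z_{p,q}z_{q,p}$. For the Hermitian case \eqref{logdetexher}, $Z^*=Z$ forces $z_{p,p}\in\R$ (so $z_{p,p}=x_{p,p}$) and $z_{q,p}=\overline{z_{p,q}}$, whence $z_{p,q}z_{q,p}=|z_{p,q}|^2=x_{p,q}^2+t_{p,q}^2$; substitution gives the stated real quadratic part.

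For part (b) the identity is multiplicativity of the determinant: $E+W=E(I+E^{-1}W)$, so $\det(E+W)=\det E\cdot\det(I+Z)$ with $Z=E^{-1}W$, and subtracting $\log\det E$ leaves $f(W)=-\log\det(I+Z)$ (the real branch is legitimate because $\|W\|_2\le\|E^{-1}\|_2^{-1}=\lambda_{\min}(E)$ makes $E+W\succeq 0$; strictness of the inequality is what one needs to invoke the power series of part (a)). For the complexity: by \eqref{derlogdet} the gradient of $W\mapsto-\log\det(E+W)$ at $W=0$ is $-E^{-1}$ (up to the transpose convention of \eqref{derlogdet}), so computing $\nabla f(0)$ costs one $n\times n$ inversion, $O(n^3)$. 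Differentiating the quadratic term $\tfrac12\tr Z^2=\tfrac12\tr(E^{-1}WE^{-1}W)$ from part (a) identifies the Hessian of $f$ at $0$ with the bilinear form $(W_1,W_2)\mapsto\tr(E^{-1}W_1E^{-1}W_2)$; its value on basis pairs is $\tr(E^{-1}E_{ij}E^{-1}E_{kl})=(E^{-1})_{l,i}(E^{-1})_{j,k}$, an $O(1)$ lookup once $E^{-1}$ is in hand, and there are $O(n^2)\times O(n^2)=O(n^4)$ entries, giving $O(n^4)$.

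For part (c) the first step is the algebraic identity $C(A+X,B+Y)=C(A,B)-X\otimes I_{n_2}-I_{n_1}\otimes Y$, immediate from the definition of $C(\cdot,\cdot)$ in \eqref{defqDhD}; applying part (b) with $E=C(A,B)$ and $W=X\otimes I_{n_2}+I_{n_1}\otimes Y$ (so $Z=-C(A,B)^{-1}W$, whence the minus sign) gives $f(X,Y)=-\log\det C(A,B)-\log\det(I+Z)$. Writing $Z$ entrywise with $[X\otimes I_{n_2}]_{(k,r)(j,q)}=x_{k,j}\delta_{r,q}$ and $[I_{n_1}\otimes Y]_{(k,r)(j,q)}=\delta_{k,j}y_{r,q}$ and contracting against $C(A,B)^{-1}=[e_{(i,p)(j,q)}]$ produces the displayed formula for $z_{(i,p)(j,q)}$. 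The gradient and Hessian quadratic form \eqref{drasHesXY} then follow by inserting this explicit linear expression into the $j=1$ term $-\tr Z=-\sum_{(i,p)}z_{(i,p)(i,p)}$ and the $j=2$ term $\tfrac12\tr Z^2=\tfrac12\sum_{(i,p),(j,q)}z_{(i,p)(j,q)}z_{(j,q)(i,p)}$ of part (a), collecting the coefficients of $x_{k,i}$ and $y_{l,p}$, and using Hermiticity of $C(A,B)^{-1}$ (together with the transpose convention of \eqref{derlogdet}) to put the indices in the stated order; one then recognizes $F$ and $G$ as the partial traces of $C(A,B)^{-1}$. For the count with $n_1=n_2=n$: forming $C(A,B)^{-1}$ inverts an $n^2\times n^2$ matrix, $O(n^6)$; each of the $n^2$ entries of $F$ and of $G$ is a sum of $n$ entries of that inverse, $O(n^3)$ in all; and the Hessian, a form on the $O(n^2)$-dimensional space $\rH_{n}\times\rH_{n}$, has $O(n^4)$ entries, each a sum over two internal indices of products of entries of $C(A,B)^{-1}$, i.e. $O(n^2)$ work apiece, for $O(n^6)$ overall.

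The genuinely routine parts are the determinant factorizations and the term-by-term trace extraction in (a)–(b); the one place that takes real care is the index bookkeeping in (c) — carrying the tensor indices $(i,p),(j,q)$ correctly through the Kronecker products and reconciling the transpose in $\nabla\log\det$ with the Hermiticity of $C(A,B)^{-1}$ so that \eqref{drasHesXY} comes out exactly as written. A secondary point worth flagging is that the hypothesis $\|W\|_2\le\|E^{-1}\|_2^{-1}$ in (b) must be read as a strict inequality wherever the series of (a) is actually used.
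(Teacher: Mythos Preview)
Your proposal is correct and follows essentially the same route as the paper: in (a) the paper works eigenvalue-by-eigenvalue via $\det(I+Z)=\prod_i(1+\lambda_i(Z))$ and the scalar series for $-\log(1+t)$, which is just the diagonalized version of your $\log\det=\tr\log$ argument; parts (b) and (c) in the paper are the same determinant factorization and the same flop-counting via the explicit linear/quadratic trace terms that you describe. Your remark that the hypothesis in (b) should be read as a strict inequality to invoke (a) is apt; the paper in fact writes $\|Z\|_2\le\|E^{-1}\|_2\|W\|_2<1$ at that point.
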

\begin{proof} (a) Let $\lambda_i(Z),i\in[n]$ be the  eigenvalues $Z$.  Clearly, $\det(I+Z)=\prod_{i=1}^n(1+\lambda_i(Z))$.  
If $\|Z\|_2<1$ then $|\lambda_i(Z)|<1$ for $i\in[n]$.   Recall that for a complex $t, |t|<1$ one has the Taylor expansion $-\log(1+t)=\sum_{j=1}^{\infty}\frac{(-1)^{j}t^j}{j}$.  Hence
\begin{equation*}
\begin{aligned}
&-\log\det(I+Z)=-\sum_{i=1}^n\log(1+\lambda_i(Z))=\\
&-\sum_{i=1}^n \sum_{j=1}^\infty \frac{(-1)^j\lambda_i^j(Z)}{j}=\sum_{j=1}^\infty \frac{(-1)^j\tr (Z^j)}{j}.
\end{aligned}
\end{equation*}
This proves \eqref{logdetexp1}.  The proof of the rest of (a) is straightforward.

\noindent
(b)  Clearly,
\begin{equation*}
-\log\det (E+W)=-\log\det E E^{-1}(E+W)=-\log\det E -\log\det (I+E^{-1}W).
\end{equation*}
Observe that $\|Z\|_2\le \|E^{-1}\|_2\|W\|_2<1$.   This proves \eqref{expldEW}.
Let $E^{-1}=[\tilde e_{i,j}]\in \rH_{n}$.  Recall the complexity of finding $E^{-1}$ is $O(n^3)$ \cite{GV96}.  Assume that we computed the entries of $E^{-1}$.
Observe that $z_{i,j}=\sum_{k=1}^n \tilde e_{i,k}w_{k,j}$.   Use \eqref{logdetex} to deduce that the linear and quadratic terms in the Taylor expansion of $f(W)$ are 
\begin{equation*}
\begin{aligned}
&\sum_{i=1}^n\sum_{k=1}^n \tilde e_{i,k}w_{k,i},\\
& \sum_{i=j=1}^n\left(\sum_{k=1}^n  \tilde e_{i,k}w_{k,j}\right)\left(\sum_{l=1}^n  \tilde e_{j,l}w_{l,i}\right).
\end{aligned}
\end{equation*}
Thus,  to compute $\nabla(f)(0)$ we need $O(n^2)$ flops.  To compute $H(f)(0)$ we need $O(n^4)$ flops.  Taking in account the computation of $E^{-1}$, one needs $O(n^3)$  and $O(n^4)$ flops to compute to compute $\nabla(f)(0)$ and $H(f)(0)$ respectively.

\noindent
(c) The equalities in (c) are straightforward.  Assume that $n_1=n_2=n$.  As $C(A,B)\in \rH_{n^2,++}$ it follows that the computation of $C^{-1}(A,B)$ is $O(n^6)$.  Assume that we computed $C^{-1}(A,B)$.   Then, to compute each element of $F$ and $G$ we need $n$ flops Hence to compute $\nabla f(0)=(F,G)$ we need $O(n^3)$.  To compute $H(f)(0)$ we need to estimate the number of monomials we obtain in the quadratic terms of the Taylor expansion.  We have $n^4$ of products of two linear forms each containing $2n$ linear terms.  Hence we have $4n^6$ monomial in the quadratic expansion of $f(W)$.  That is, computing $H(f)(0)$ is $O(n^6)$.  Recalling that the computation of $C^{-1}(A,B)$ is $O(n^6)$ we deduce the last claim of (c). 
\end{proof}
\begin{theorem}\label{IPMqBP}
Let $\rho_i\in\rH_{n_i,++,1},i\in[2]$.  Consider the minimum \eqref{qotp}, which corresponds to the maximum dual problem \eqref{dqBOTP}.   Apply the IPM short step interior path algorithm with the barrier \eqref{hbqBT} to the $-\min$ problem as in \eqref{min=maxcBP}, where $r$ is given by
\eqref{defrqBP}.  Choose a  starting point  given by \eqref{X0Y0qBP}.
The CPA algorithm finds the value $\kappa(C,R)$ within precision $\delta>0$ in  
\begin{equation}\label{ipmqotmat1}
O\big(\sqrt{n_1n_2+1}\log\frac{r(n_1n_2+1)}{\delta}\big)
\end{equation}
iterations. 
For $n_1=n_2=n$ the arithmetic complexity of CPA is $O\big(n^7\log\frac{n^2r}{\delta}\big)$.
\end{theorem}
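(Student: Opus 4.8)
The plan is to follow the proof of Theorem \ref{IPMcBP} step for step, replacing the classical estimates of Lemma \ref{maxu1u2est} with the quantum ones in Lemma \ref{maxqbest} and the classical Newton-step flop count with the one supplied by part (c) of Lemma \ref{loddetder}. First I would record the geometry. By Theorem \ref{dbqBPOTP} the concave program $\max_{(X,Y)\in\rD}\varphi(X,Y,C)$ equals $\kappa_\beta(C,R,\varepsilon)$, is attained, and its set of maximizers is the line $\rL$ of \eqref{defqLBP}, which meets $\rD$ in a single point; by parts (c) and (e) of Lemma \ref{maxqbest} this point lies in the open ball $\rB_o((X_0,Y_0),r)$ with $r$ as in \eqref{defrqBP}, hence in the bounded convex body $\rD_1=\rD\cap\rB_o((X_0,Y_0),r)$. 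Part (d) of Lemma \ref{maxqbest} puts the starting point \eqref{X0Y0qBP} inside $\rD_1$, and part (g) gives the barrier parameter $\theta(\hat\beta)\le n_1n_2+1$ for the augmented barrier \eqref{hbqBT}. So the short-step central path algorithm can legitimately be run on $\rD_1$ with barrier $\hat\beta$.

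Next I would bound the iteration count by inserting this data into the generic estimate \eqref{Renthm}, which gives $O\big(\sqrt{\theta(\hat\beta)}\,\log(\theta(\hat\beta)/(\delta\,\textrm{sym}((X_0,Y_0),\rD_1)))\big)$ iterations. The symmetry parameter dominates the ratio of the in-radius to the out-radius of $\rD_1$ about $(X_0,Y_0)$: part (f) of Lemma \ref{maxqbest} gives in-radius at least $1/\sqrt2$, and $\rD_1\subseteq\rB_o((X_0,Y_0),r)$ gives out-radius at most $r$, so $\textrm{sym}((X_0,Y_0),\rD_1)\ge 1/(\sqrt2\,r)$. Substituting $\theta(\hat\beta)\le n_1n_2+1$ then produces \eqref{ipmqotmat1}. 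To see that the CPA delivers $\kappa(C,R)$ and not merely $\kappa_\beta(C,R,\varepsilon)$ to precision $\delta$, I would note that the central path drives $\varepsilon\downarrow 0$ with residual duality gap of order $\theta(\hat\beta)\varepsilon$, while Proposition \ref{kapbepsest} bounds $0<\kappa_\beta(C,R,\varepsilon)-\kappa(C,R)$ by a quantity of order $n_1n_2\varepsilon\log(1/\varepsilon)$; the value of $\varepsilon$ needed to make the total error below $\delta$ enters only the logarithmic factor, exactly as in the classical case.

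Finally, for $n_1=n_2=n$ I would count the cost of one CPA step, i.e. one Newton step for $\hat\beta=-\log\det C(X,Y)-\log(r^2-\|X-X_0\|_F^2-\|Y-Y_0\|_F^2)+\log r^2$. By part (c) of Lemma \ref{loddetder}, forming the gradient and Hessian of the $-\log\det C(X,Y)$ term at the current iterate costs $O(n^6)$; this already absorbs the $O(n^6)$ cost of inverting the $n^2\times n^2$ matrix $C(X,Y)$ and of expanding the $O(n^6)$ monomials appearing in its quadratic form. The ball term is a scalar-plus-rank-one correction in the $O(n^2)$ real coordinates of $(X,Y)$, contributing only $O(n^2)$ to the gradient and $O(n^4)$ to the Hessian, which is negligible. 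The resulting Newton system has an $(n_1^2+n_2^2)\times(n_1^2+n_2^2)=O(n^2)\times O(n^2)$ coefficient matrix, hence is solved in $O(n^6)$ flops. Thus one CPA step costs $O(n^6)$, and multiplying by the $O(n\log(n^2r/\delta))$ iterations from \eqref{ipmqotmat1} yields arithmetic complexity $O(n^7\log(n^2r/\delta))$.

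I expect the main obstacle to be organizational rather than technical, exactly as in Theorem \ref{IPMcBP}: carefully tying the target error $\delta$ to the barrier parameter $\varepsilon$ along the central path and to the geometric data $(r,\theta(\hat\beta),\textrm{sym})$ of $\rD_1$. Once Lemmas \ref{maxqbest} and \ref{loddetder} are in hand this amounts to a direct substitution into \eqref{Renthm}, and the per-iteration cost is entirely governed by the $O(n^6)$ Hessian assembly for $-\log\det C(X,Y)$ together with the $O(n^6)$ linear solve.
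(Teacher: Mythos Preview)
Your proposal is correct and follows essentially the same route as the paper's own proof: invoke Lemma \ref{maxqbest} for the geometry of $\rD_1$ (starting point, symmetry bound, barrier parameter), plug into \eqref{Renthm} for the iteration count, and use part (c) of Lemma \ref{loddetder} together with the $O(n^6)$ cost of inverting $C(X,Y)$ and solving the $O(n^2)\times O(n^2)$ Newton system to get $O(n^6)$ per step. You are in fact slightly more careful than the paper in spelling out the role of Proposition \ref{kapbepsest} in passing from $\kappa_\beta$ to $\kappa$ and in observing that the ball-barrier term contributes negligibly to the Hessian.
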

\begin{proof} Lemma \ref{maxqbest} shows that the maximum \eqref{dqBOTP} is achieved on $\rD_1$.  
Recall that the IPM method needs the number of iterations given in \eqref{Renthm}.
Part (f) of Lemma \ref{maxqbest} yields that $\textrm{sym}((\X_0,Y_0),\rD_1)\ge \sqrt{2}r$.
This shows that the number of iterations to find the value $\kappa(C,R)$ within precision $\delta>0$ is given by \eqref{ipmqotmat1}.
Assume that $n_1=n_2=n$.
Each step of the iteration of IPM needs the computation of the gradient and the Hessian of $\hat\beta$.   We need first to estimate the arithmetic complexity of computing $C(X,Y)^{-1}$ for $(X,Y)\in \hat\rD$.   As $C(X,Y)$ is an $n^2\times n^2$ it follows that the computation of  $C(X,Y)^{-1}$ is $O(n^6)$.   Part (c) of Lemma \ref{loddetder} yields that  computation of the gradient and the Hessian of $\hat\beta$ needs $O(n^6)$ flops.
The computation og the inverse of Hessian needs $O(n^6)$ flops.  Hence, each step of IPM needs $O(n^6)$ flops.  Therefore, the arithmetic complexity of the IPM method is  $O\big(n^7\log\frac{n^2r}{\delta}\big)$.
\end{proof}
\section{Quantum multipartite-partite optimal control and its relaxations}\label{sec:qmotp}
In this section we discuss briefly the extensions of our results in Section \ref{sec:qbotp}.  The proofs are omitted, and in some cases we point out modifications to our arguments that are given in Section \ref{sec:qbotp}. 

The quantum MOPTP is stated in \eqref{qotp}.  The analog of Lemma \ref{minubqbpotplem} is:
\begin{lemma}\label{minubqmpotplem}  Assume that $d\ge 3$.
Let $\rho_i\in\rH_{n_i,++},n_i>1,i\in[d]$  and assume that $\Gamma(R)$ be defined by \eqref{couprs}.
\begin{enumerate}[(a)]
\item 
Consider the minima $\kappa(C,R)$ and $\kappa_{\beta}(C,R,\varepsilon)$ that are given by \eqref{qotp} and \eqref{qbMOTP}, respectively. Then \eqref{bqtauin} holds.
\item The unique minimizing matrix $\rho^\star\in \Gamma_o(R)$ for \eqref{qbMOTP}, is of the form 
\begin{equation}\label{qbMPOTus}
\begin{aligned}
&\rho^\star=
\varepsilon \left(C-\sum_{j=1}^d(\otimes_{k=1}^{j-1} I_{n_k})\otimes U_j\otimes(\otimes_{k=j+1}^d I_{n_k})\right)^{-1}\in \Gamma_o(R),\\
&U_i\in\rH_{n_i}, i\in[d].
\end{aligned}
\end{equation}
Furthermore,  there exist a unique solution of the above form satisfying $\tr U_1=\cdots=\tr U_d$.
\end{enumerate}
\end{lemma}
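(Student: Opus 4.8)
The plan is to run the proof of Lemma~\ref{minubqbpotplem} essentially verbatim, replacing the two-fold tensor product by the $d$-fold one; the hypothesis $d\ge 3$ is imposed only to separate the statement from Lemma~\ref{minubqbpotplem}, so no genuinely new ingredient is needed. For part~(a) I would first note that $\rho\mapsto \tr C\rho+\varepsilon\beta(\rho)$ is strictly convex on the open convex set $\Gamma_o(R)$, because $-\log\det$ is strictly convex on $\rH_{\bn,++}$ (see \eqref{barpdm} and the line after it), and that it tends to $+\infty$ as $\rho$ approaches $\partial\Gamma(R)$ since $\beta$ is a barrier; hence $\kappa_\beta(C,R,\varepsilon)$ is attained at a unique $\rho^\star\in\Gamma_o(R)$. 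Because $\tr\rho^\star=1$ and $\rho^\star$ has $n_1\cdots n_d>1$ positive eigenvalues summing to $1$, we get $0<\det\rho^\star<1$, so $\beta(\rho^\star)=-\log\det\rho^\star>0$; combined with $\tr C\rho^\star\ge\kappa(C,R)$ this yields \eqref{bqtauin}.

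For part~(b) I would introduce Hermitian Lagrange multipliers $U_i\in\rH_{n_i}$ for the constraints $\tr_{\hat i}\rho=\rho_i$, $i\in[d]$, and differentiate $\tr C\rho-\varepsilon\log\det\rho-\sum_{i=1}^d\langle U_i,\tr_{\hat i}\rho-\rho_i\rangle$ with respect to $\rho$. Using $\nabla\log\det Z=(Z^\top)^{-1}$ from \eqref{derlogdet} (and that $\rho^\star$, hence $(\rho^\star)^{-1}$, is Hermitian) gives $\varepsilon(\rho^\star)^{-1}=C-\sum_{j=1}^d\hat U_j$ with $\hat U_j=(\otimes_{k=1}^{j-1}I_{n_k})\otimes U_j\otimes(\otimes_{k=j+1}^d I_{n_k})$, which is \eqref{qbMPOTus}. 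This representation is not unique: replacing $U_j$ by $U_j+t_jI_{n_j}$ adds $\big(\sum_{j=1}^d t_j\big)(\otimes_{k=1}^d I_{n_k})$ to $\sum_j\hat U_j$, so the $(d-1)$-parameter family $\{(t_1,\dots,t_d):\sum_j t_j=0\}$ preserves $\rho^\star$. Imposing $\tr(U_1+t_1I_{n_1})=\cdots=\tr(U_d+t_dI_{n_d})$ together with $\sum_{j=1}^d t_j=0$ is a square, uniquely solvable linear system in $(t_1,\dots,t_d)$, which produces a representative satisfying $\tr U_1=\cdots=\tr U_d$.

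Uniqueness of this normalized representative reduces to the implication
\[
\Big(\tr U_1=\cdots=\tr U_d\Big)\wedge\Big(\textstyle\sum_{j=1}^d\hat U_j=0\Big)\ \Rightarrow\ U_1=\cdots=U_d=0,
\]
the $d$-mode analog of \eqref{U1U2imp}. I would prove it by inspecting the entries of $\sum_j\hat U_j$: an entry indexed by $(i_1,\dots,i_d),(i_1',\dots,i_d')$ in which two or more of the coordinate pairs $(i_k,i_k')$ are off-diagonal receives no contribution, which forces every off-diagonal entry of every $U_j$ to vanish, so each $U_j$ is diagonal; the diagonal entries then satisfy $u_{j_1,1}+\cdots+u_{j_d,d}=0$ for all $(j_1,\dots,j_d)$, and, together with $\tr U_1=\cdots=\tr U_d$, this is precisely the classical balanced system whose only solution is $U_j=0$, settled in the proof of Lemma~\ref{minubcmpotplem} (via part~(b) of Lemma~\ref{minuecbpotplem}). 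The only real work is the index bookkeeping in this last step — tracking which multi-indices of the $d$-fold tensor $\sum_j\hat U_j$ isolate a single $U_j$ rather than a combination of several — but it is a routine extension of the bipartite computation; all of the analytic content, namely existence and uniqueness of $\rho^\star$, comes for free from strict convexity of $-\log\det$.
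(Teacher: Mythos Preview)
Your approach is correct and matches the paper's intended argument (the paper omits this proof, declaring it analogous to Lemma~\ref{minubqbpotplem}). One small slip in your index bookkeeping: the entries of $\sum_j\hat U_j$ with \emph{two or more} off-diagonal coordinate pairs vanish automatically and therefore carry no information; it is the entries with \emph{exactly one} off-diagonal pair $(i_p,i_p')$ (and all other pairs diagonal) that isolate $(U_p)_{i_p,i_p'}$ and force it to be zero, after which the diagonal case proceeds as you indicate.
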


In what follows we assume $n_1=\cdots=n_d=n\ge 2$ for simplicity of our exposition.
Let $\bn=(\underbrace{n,\ldots,n}_d)$
The following theorem is a generalization of Theorem \ref{dbqBPOTP}:
\begin{theorem}\label{dbqMPOTP} Let $(\rho_1,\ldots,\rho_d)\in (\times ^d \rH_{n})_{++,b}$.
For a given $C\in\rH_{\bn}$ define
\begin{equation}\label{defqdDhD}
\begin{aligned}
&C(X_1,\cdots,X_d)=C-\sum_{j=1}^d(\otimes_{k=1}^{j-1}I_{n})\otimes X_j\otimes(\otimes_{k=j+1}^d, I_{n}),\\
&(X_1,\ldots,X_d)\in\times^d\rH_n,\\
&\hat\rD=\{(X_1,\ldots,X_d)\in\times^d\rH_n, C(X_1,\ldots,X_d)\in \rH_{\bn,++}\},\\
&\rD=\{(X_1,\ldots,X_d)\in\hat\rD,\tr X_1=\cdots=\tr X_d, \}.
\end{aligned}
\end{equation} 
The barrier function for $\hat \rD$ is
\begin{equation}\label{barqMOTd}
\beta_Q(X_1,\ldots,X_d)=-\log\det C(X_1,\ldots,X_d),  \quad (X_1,\ldots,X_d)\in\hat\rD.
\end{equation}
The function 
\begin{equation}\label{mfbqBOTP}
\begin{aligned}
\varphi(X_1,\ldots,X_d,C)=\sum_{j=1}^d \tr \rho_jX_j-
\varepsilon\beta_Q(X_1,\ldots,X_d)+n^d\varepsilon(1-\log\varepsilon),
\end{aligned}
\end{equation}
is concave in $\hat\rD$ and strictly concave in $\rD$.  Furthermore,
\begin{equation}\label{maxqfMOTP}
\begin{aligned}
&\max_{(X_1,\ldots,X_d)\in\hat\rD}\varphi(X_1,\ldots,X_d,C)=\\
&\max_{(X_1,\ldots,X_d,C)\in \rD}\varphi(X_1,\ldots,X_d,C)=\kappa_{\beta}(C,R,\varepsilon),
\end{aligned}
\end{equation}
where $\beta$ and $\kappa_{\beta}(C,R,\varepsilon)$ are given \eqref{barpdm} and \eqref{qbMOTP} respectively.
The function $\varphi$ achieves its maximum
exactly on the hyperplane
\begin{equation}\label{defqLMP}
\rL=\{(U_1,\ldots,U_d)+(t_1I_{n_1}, \ldots,t_d I_{n_2}),(t_1,\ldots,t_d\in\R^d, \sum_{j=1}^dt_j=0\},
\end{equation}
which intersects $\rD$ at the unique point $(U_1,\ldots,U_d)$.  Furthermore, 
\begin{equation}\label{qbMPOTus}
\rho^\star=\varepsilon C(U_1,\ldots,U_d)^{-1}, (U_1,\ldots,U_d) \in \rL,
\end{equation}
 is the unique point in $\Gamma_o(R)$ at which $\kappa_{\beta}(C,R,\varepsilon)$ is achieved.
\end{theorem}
\subsection{The interior point method for finding a barrier approximation of $\kappa(C,R)$}\label{subsec:ipmqMP}
The following lemma is a generalization of Lemma \ref{maxqbest}
\begin{lemma}\label{maxqmbest}
Let $2\le n, 3\le d$ be integers, and $0\ne C\in\rH_{\bn}$. 
\begin{enumerate}[(a)]
\item Consider the minimum problem \eqref{qotp}.
Then
\begin{equation}\label{lbtau}
 \kappa(C,R)\ge \lambda_{\min}(C).
\end{equation}
\item Assume that $C(X_1,\ldots,X_d), \hat \rD,\rD$ are defined in \eqref{defqdDhD}.   
The dual maximum problem to \eqref{qotp} is given by 
\begin{equation}\label{dqMOTP}
\begin{aligned}
&\kappa(C,R)=\max_{(X_1,\cdots,X_d)\in\times^d\rH_n, C(X_1,\ldots,X_d)\succeq 0}\sum_{j=1}^d\tr\rho_jX_j=\\
&\sup_{(X_1,\cdots,X_d)\in\times^d\in\hat\rD} \sum_{j=1}^d\tr\rho_jX_j=
\sup_{(X_1,\cdots,X_d)\in\times^d\in\rD} \sum_{j=1}^d\tr\rho_jX_j.
\end{aligned}
\end{equation} 
\item  Let $\rho_i\in\rH_{n_i,++,1}, i\in[d]$.   Suppose that
\begin{equation}\label{X1Xdlbin}
\begin{aligned}
&(C(X_1,\ldots,X_d)\succeq 0)\wedge(\sum_{j=1}^d\tr\rho_jX_j\ge \lambda_{\min}(C))\wedge\\
&(\tr X_1=\cdots=\tr X_d).
\end{aligned}
\end{equation}
Then
\begin{equation}\label{X1Xdineq}
\begin{aligned}
\|(X_1,\ldots,X_d)\|_F=\sqrt{\sum_{j=1}^d\|X_j\|_F^2}<
\frac{2\|C\|_2\sqrt{n}}{\prod_{j=1}^d\lambda_{\min}(\rho_j)}.
\end{aligned}
\end{equation}
\item Let
\begin{equation}\label{X10Xd0qBP}
X_{j,0}=\frac{t}{n}I_{n}, j\in[d], \quad t=\frac{n(-1+\lambda_{\min}(C))}{d}.
\end{equation}
Then $(X_{1,0},\ldots,X_{d,0})\in\rD$. 
\item  Let 
\begin{equation}\label{defrqMP}
r^2=\frac{(9\|C\|_2^2+1)n}{(\prod_{j=1}^d\lambda_{\min}(\rho_j))^2}.
\end{equation}
The interior of the ball
$\rB((X_{1,0},\ldots,X_{d,0}),r)=\{\sum_{j=1}^d\|X_j-X_{j,0}\|_F^2\le  r^2\},$
denoted as $\rB_o((X_{1,0},\ldots,X_{d,0}),r)$, contains all the points $(X_1,,\ldots,X_d)$ that satisfies the conditoins  \eqref{X1Xdlbin}.
\item Let $\rD_1=\rD\cap   \rB_o((X_{1,0},\ldots,X_{d,0}),r)$.  Then for any boundary point $(X_1,\ldots,X_d)$ of $\rD_1$ the inequality $\|(X_{1,0},\ldots,X_{d,0})-(X_1,\ldots,X_d)\|_F\ge \frac{1}{\sqrt{d}}$ holds.
\item The function
\begin{equation}\label{hbqMTd}
\begin{aligned}
&\hat\beta(X,Y)=-\log\det C(X_1,\ldots,X_d)\\
&-\log\left(r^2-\sum_{j=1}^d\|X_j-X_{j,0}\|_F^2\right)+\log r^2
\end{aligned}
\end{equation}
is a barrier function for the convex domain $\rD_1$ with $\theta(\hat\beta)\le n^d+1$.
\end{enumerate}
\end{lemma}

The following proposition is a generalization of Proposition \ref{kapbepsest}.
\begin{proposition}\label{kapmepsest}  Let $2\le n, 3\le d\in\N$, and $0\ne C\in\rH_{\bn}$.  Assume that $\rho_i\in\rH_{n,++,1}, i\in[d]$ , and $\varepsilon>0$.  Then 
\begin{equation}\label{bmqtauin}
\begin{aligned}
&\kappa(C,R)<\kappa_{\beta}(C,R,\varepsilon)<\kappa(C,R) +n^d\varepsilon(1-\log\varepsilon)+\\
&n^d\varepsilon\log \left(\frac{2\|C\|_2}{\prod_{i=1}^d\lambda_{n}(\rho_i)}\right)
\end{aligned}
\end{equation}
\end{proposition}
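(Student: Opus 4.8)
\emph{Proof proposal.} The plan is to follow the proof of Proposition~\ref{kapbepsest} almost verbatim, replacing $n_1n_2$ throughout by $n^d$ and the two-fold tensor structure by the $d$-fold one. The first inequality $\kappa(C,R)<\kappa_\beta(C,R,\varepsilon)$ is precisely \eqref{bqtauin}, supplied by part~(a) of Lemma~\ref{minubqmpotplem}.

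For the upper bound I first record the multipartite analog of the identity \eqref{dkapRC}. By part~(b) of Lemma~\ref{minubqmpotplem} and Theorem~\ref{dbqMPOTP} the minimizer of \eqref{qbMOTP} is $\rho^\star=\varepsilon\,C(U_1,\ldots,U_d)^{-1}\in\Gamma_o(R)$ with $\tr U_1=\cdots=\tr U_d$. Expanding $\tr C\rho^\star$ along $C=C(U_1,\ldots,U_d)+\sum_{j=1}^d(\otimes_{k<j}I_n)\otimes U_j\otimes(\otimes_{k>j}I_n)$, and using $\tr_{\widehat j}\rho^\star=\rho_j$, $\tr\big(C(U_1,\ldots,U_d)\rho^\star\big)=\varepsilon n^d$, and $-\varepsilon\log\det\rho^\star=\varepsilon\log\det C(U_1,\ldots,U_d)-n^d\varepsilon\log\varepsilon$, one gets
\begin{equation*}
\kappa_\beta(C,R,\varepsilon)-\varepsilon n^d(1-\log\varepsilon)=\sum_{j=1}^d\tr\rho_j U_j+\varepsilon\log\det C(U_1,\ldots,U_d).
\end{equation*}
By the dual characterization \eqref{dqMOTP}, $\sum_j\tr\rho_j U_j\le\kappa(C,R)$, so the whole problem reduces to bounding $\log\det C(U_1,\ldots,U_d)$ from above.

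To do that, conjugate $U_1,\ldots,U_d$ and $C$ simultaneously by the unitaries that diagonalize the $U_j$, exactly as in the proof of part~(c) of Lemma~\ref{maxqmbest}; this leaves $\kappa(C,R)$, $\det C(U_1,\ldots,U_d)$, the traces $\tr\rho_j U_j$, and all Frobenius norms unchanged, and makes the diagonal of $C(U_1,\ldots,U_d)$ equal to the positive numbers $c_{j_1,\ldots,j_d}-(x_{j_1,1}+\cdots+x_{j_d,d})$ in the conjugated coordinates. Hadamard's determinant inequality followed by AM--GM (concavity of $\log$) bounds $\log\det C(U_1,\ldots,U_d)$ by $\tfrac{n^d}{2}$ times the logarithm of the average of the $n^d$ quantities $\big(c_{j_1,\ldots,j_d}-\sum_i x_{j_i,i}\big)^2$; an elementary quadratic inequality on each such term, the identity $\sum_{j_1,\ldots,j_d}\sum_i x_{j_i,i}^2=n^{d-1}\sum_i\|U_i\|_F^2$, the a~priori norm bound \eqref{X1Xdineq} of Lemma~\ref{maxqmbest}, and the Cauchy interlacing estimates $\max_{j_1,\ldots,j_d}|c_{j_1,\ldots,j_d}|\le\|C\|_2$ and $(\rho_i)_{jj}\ge\lambda_{\min}(\rho_i)$ then turn this into a bound of the form recorded on the right-hand side of \eqref{bmqtauin} (with whatever explicit constants these elementary bounds produce). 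Combining with the displayed identity and $\sum_j\tr\rho_j U_j\le\kappa(C,R)$ completes the argument.

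The only step that is not pure bookkeeping---exactly as in the $d=2$ case---is checking that the a~priori estimate \eqref{X1Xdineq} is legitimately applied to the optimal dual point $(U_1,\ldots,U_d)$, i.e.\ that this point satisfies hypothesis \eqref{X1Xdlbin}: the conditions $C(U_1,\ldots,U_d)\succ0$ and $\tr U_1=\cdots=\tr U_d$ are immediate, and the remaining lower bound $\sum_j\tr\rho_j U_j\ge\lambda_{\min}(C)$ is obtained through the reduction to the classical multipartite problem carried out inside the proof of Lemma~\ref{maxqmbest}(c). I expect this to be the main (and essentially the only) obstacle; once it is in place the rest of the proof is mechanical.
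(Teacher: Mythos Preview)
Your proposal is correct and takes essentially the same approach as the paper, which omits this proof as the direct $d$-partite analogue of Proposition~\ref{kapbepsest}. Your argument reproduces that bipartite proof step for step---conjugation to diagonal $U_j$, Hadamard's inequality, AM--GM, and the a~priori bound from Lemma~\ref{maxqmbest}(c)---and the point you flag about verifying hypothesis~\eqref{X1Xdlbin} for the optimal $(U_1,\ldots,U_d)$ is handled in the paper's $d=2$ proof with the same informal appeal to ``the arguments in the proof of part~(c)'' of the corresponding lemma, so your treatment is at least as careful as the paper's.
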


The following theorem is a generalization of Theorem \ref{IPMqBP}
\begin{theorem}\label{IPMqMP}
Let $\rho_i\in\rH_{n,++,1},i\in[d]$.  Consider the minimum \eqref{qotp}, which corresponds to the maximum dual problem \eqref{dqMOTP}.   Apply the IPM short step interior path algorithm with the barrier \eqref{hbqMTd} to the $-\min$ problem as in \eqref{min=maxcBP}, where $r$ is given by
\eqref{defrqMP}.  Choose a  starting point  given by \eqref{X10Xd0qBP}.
The CPA algorithm finds the value $\kappa(C,R)$ within precision $\delta>0$ in  
$O\big(n^{d/2}\log\frac{rn^d}{\delta}\big)$ iterations. 
The arithmetic complexity of CPA is $O\big(n^{7d/2}\log\frac{n^dr}{\delta}\big)$.
\end{theorem}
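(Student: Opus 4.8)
The plan is to transcribe the proof of Theorem~\ref{IPMqBP} essentially line by line, replacing the bipartite estimates by the $d$-mode estimates of Lemma~\ref{maxqmbest} and the flop counts by the $d$-mode analogue of Lemma~\ref{loddetder}(c); the architecture is the same as the way Theorem~\ref{IPMcMP} was obtained from Theorem~\ref{IPMcBP}. By part (b) of Lemma~\ref{maxqmbest} the concave maximization \eqref{dqMOTP} is the dual of \eqref{qotp}, and by part (e) its optimizer lies in the open ball $\rB_o((X_{1,0},\ldots,X_{d,0}),r)$ with $r$ as in \eqref{defrqMP}; hence maximizing $\varphi$ over $\rD_1=\rD\cap\rB_o$ (equivalently minimizing $-\varphi$, as in \eqref{min=maxcBP}) produces the value $\kappa(C,R)$. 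Part (g) gives $\theta(\hat\beta)\le n^d+1$ for the augmented barrier \eqref{hbqMT}; part (f) gives that every boundary point of $\rD_1$ is at distance $\ge 1/\sqrt d$ from the chosen center $(X_{1,0},\ldots,X_{d,0})$ of \eqref{X10Xd0qBP}, while $\rD_1\subseteq\rB((X_{1,0},\ldots,X_{d,0}),r)$, so the ratio $r_{\mathrm{out}}/r_{\mathrm{in}}$ of circumradius to inradius about that center is at most $\sqrt d\, r$. Feeding $\theta(\hat\beta)\le n^d+1$ and this ratio into the short-step central-path bound \eqref{Renthm} yields an iteration count $O\!\big(\sqrt{n^d+1}\,\log\frac{(n^d+1)\sqrt d\, r}{\delta}\big)=O\!\big(n^{d/2}\log\frac{rn^d}{\delta}\big)$, the $d$-dependent constants being absorbed for fixed $d$, exactly as in Theorem~\ref{IPMcMP}.

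Next I would bound the cost of one iteration, which is a single Newton step for $t\sum_{j}\tr\rho_jX_j+\hat\beta$, i.e. forming $\nabla\hat\beta$, $H(\hat\beta)$ and solving $H(\hat\beta)\Delta=\nabla\hat\beta-t(\rho_1,\ldots,\rho_d)$. The Euclidean-ball term of $\hat\beta$ contributes only $O(dn^2)$. The term $-\log\det C(X_1,\ldots,X_d)$ is handled by the $d$-mode version of Lemma~\ref{loddetder}(c): with $E=C(X_1,\ldots,X_d)\in\rH_{\bn,++}$ of size $n^d\times n^d$, forming $E^{-1}$ costs $O(n^{3d})$; from $E^{-1}$ the gradient is read off as $d$ matrices in $\rH_n$ whose entries are sums of $n^{d-1}$ terms ($O(dn^{d+1})$ flops), and, writing $Z=-E^{-1}\sum_{j=1}^d(\otimes_{k<j}I_n)\otimes X_j\otimes(\otimes_{k>j}I_n)$, the quadratic part $\tfrac12\tr Z^2$ of the Taylor expansion of $-\log\det(I+Z)$ is a sum of $n^{2d}$ products of linear forms each in $O(dn)$ of the $dn^2$ unknowns, i.e. $O(d^2n^{2d+2})$ monomials; solving the $dn^2\times dn^2$ Newton system costs $O(d^3n^6)$. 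For $d\ge3$ one has $3d>\max(2d+2,6)$, so all of these are dominated by $O(n^{3d})$, and each iteration costs $O(n^{3d})$ flops. Multiplying by the iteration count gives arithmetic complexity $O\!\big(n^{7d/2}\log\frac{n^dr}{\delta}\big)$, as claimed.

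The only step that is not a rote transcription — and the one I expect to be the main obstacle — is verifying, in the $d$-mode Lemma~\ref{loddetder}(c), that $z_{\mathbf i,\mathbf j}$ depends linearly on only $O(dn)$ of the $dn^2$ unknowns. This is because the operator $(\otimes_{k<j}I_n)\otimes X_j\otimes(\otimes_{k>j}I_n)$ couples the multi-indices $\mathbf i$ and $\mathbf j$ only through their $j$-th coordinates, so $z_{\mathbf i,\mathbf j}=-\sum_{j=1}^d\sum_{k}(E^{-1})_{\mathbf i,\,\mathbf j^{(j\to k)}}(X_j)_{k,j_j}$; this is precisely what keeps the Hessian monomial count at $O(d^2n^{2d+2})$ and hence strictly below the $O(n^{3d})$ cost of inverting $E$. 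I would also note, as in the proof of Theorem~\ref{IPMqBP}, that Lemma~\ref{minubqmpotplem} and Theorem~\ref{dbqMPOTP} identify the limit of the central path (the point $\rL\cap\rD=(U_1,\ldots,U_d)$) with the minimizer realizing $\kappa_\beta(C,R,\varepsilon)$, so that the value returned by the CPA is indeed $\kappa(C,R)$ up to the stated precision $\delta$.
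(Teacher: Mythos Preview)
Your proposal is correct and follows essentially the same route as the paper: invoke Lemma~\ref{maxqmbest}(e)--(g) for $\theta(\hat\beta)\le n^d+1$ and $\textrm{sym}\ge 1/(\sqrt d\,r)$ to get the iteration count via \eqref{Renthm}, then bound one Newton step by noting that inverting the $n^d\times n^d$ matrix $C(X_1,\ldots,X_d)$ costs $O(n^{3d})$ and dominates the gradient ($O(dn^{d+1})$), Hessian ($O(d^2n^{2d+2})$), and linear-solve ($O(d^3n^6)$) costs for $d\ge 3$. Your identification of the key point --- that each entry $z_{\mathbf i,\mathbf j}$ depends on only $dn$ unknowns via the $d$-mode analogue of \eqref{linform} --- is exactly the observation the paper uses to get the $d^2n^{2d+2}$ monomial count.
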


The proof of this Theorem is similar to the proof Theorem \ref{IPMqBP}.  We discuss briefly the arithmetic complexity of the IPM algorithm.  Note that the matrix $C(X_1,\ldots,X_d)$ is $n^d\times n^d$.  Hence, its inverse is computed in $O(n^{3d})$ flops.  The computation of the gradient and the Hessian of $-\log (r^2-\sum_{i=1}^d \|X_i\|_F^2)+\log r^2$ needs $O(dn^2)$ and $O((dn)^2)$ flops respectively. It is left to discuss the computation 
of the gradient and Hessian $\beta$.

Assume that $C^{-1}(A_1,\ldots, A_d)=[e_{(i_1,\ldots,i_d)(j_1,\ldots,j_d)}]$.   
 Observe
\begin{equation}\label{linform}
\begin{aligned}
-C^{-1}(A_1,\ldots, A_d)(X_1\otimes I_{n^{d-1}})_{(i_1,\ldots,i_d)(j_1,\ldots,j_d)}=\\
\sum_{k_1=1}^n e_{(i_1,i_2,\ldots,i_d)(k_1,j_2,\ldots,j_d)}x_{k_1,j_1,1}
\end{aligned}
\end{equation}
Hence,  
\begin{equation*}
-\frac{\partial\log\det C(A_1+X_1,\ldots A_d+X_d)}{\partial x_{k_1,i_1}} 
=\sum_{j_2=\ldots=j_d=1}^n e_{(i_1,j_2,\ldots,j_d)(k_1,j_2,\ldots,j_d)}.
\end{equation*}
Thus to compute the gradient of $\beta$ we need $O(dn^{d+1})$ flops.  
To compute the number of flops for the Hessian of $\beta$ we use a similar equality to
\eqref{drasHesXY}.   It will contain $n^{2d}$ summands, where each summand is a product of two linear forms, where each linear form contains $dn$ terms corresponding to the entries of $X_1,\ldots,X_d$.   A linear term corresponding to $X_1$ is given in \eqref{linform}
 Hence the number of quadratic monomials is $n^{2d}(dn)^2=d^2n^{2d+2}$.
 Thus, the number of flops to compute $H(\beta)$ is $O(d^2n^{2d+2}$.   As $H(\beta)$ is a matrix of order $dn^2$, the computation of $H^{-1}(\beta)$ need $O((dn^2)^3)$ flops.
 Taking in account the inverse of   $C(X_1,\ldots,X_d)$ is  $O(n^{3d})$ flops we deduce that each step of the IPM method needs $O(n^{3d})$ flops.  As the IPM method needs
 $O\big(n^{d/2}\log\frac{rn^d}{\delta}\big)$ iterations., we deduce that
the arithmetic complexity of CPA is $O\big(n^{7d/2}\log\frac{n^dr}{\delta}\big)$.

\section*{Acknowledgment} 

\noindent
Part of this work was supported by IPAM in UCLA,  and conducted when the author was visiting  the Workshops I and III in Non-commutative Optimal Transport Program, Spring 2025.

\bibliographystyle{plain}

\appendix

\section{The interior point method}\label{sec:ipm}
We first recall some notations and definitions that we will use in this section.
\begin{equation}\label{defellsnrm}
\begin{aligned}
&\|\x\|_s=\bigl(\sum_{i=1}^n |x_i|^s\bigr)^{1/s}, \, s\in[1,\infty],\,
\x=(x_1,\ldots,x_n)^\top\in\R^n,\\
&\rB(\x,r)=\{\y\in\R^n, \|\y-\x\|\le r\} \textrm{ for } r\ge 0.
\end{aligned}
\end{equation}

Let $f\in\rC^3(\rB(\x,r))$ for $r>0$.    Denote 
\begin{equation*}
f_{,i_1\ldots i_d}(\x)=\frac{\partial ^d}{\partial x_{i_1}\ldots\partial x_{i_d}}f(\x), \quad i_1,\ldots,i_d\in[n], d\in[3].
\end{equation*}
Recall the Taylor expansion of $f$ at $\x$ of order $3$ for $\bu\in\R^n$ with a small norm:
\begin{equation*}
\begin{aligned}
&f(\x+\bu)\approx f(\x)+\nabla f(\x)^\top\bu+\frac{1}{2}\bu^\top \partial^2 f(\x)\bu+\frac{1}{6}\partial ^3 f(\x)\otimes \bu^{3\otimes},\\
&\nabla f(\x)=(f_{,1}(\x),\ldots,f_{,n}(\x))^\top,  \quad \partial^2 f(\x)=[f_{,ij}(\x)],  i,j\in[n],\\ 
&\partial^3 f(\x)=[f_{,ijk}(\x)], i,j,k\in[n], \, \partial^3 f(\x)\otimes\bu^{3\otimes}=\sum_{i,j,k\in[n]}f_{,ijk}(\x)u_i u_j u_k, 
\end{aligned}
\end{equation*}
where $\nabla f, \partial^2 f, \partial ^3 f$ are called
the gradient, the Hessian, and the 3-mode symmetric partial derivative tensor of $f$. 
A set $\rD\subset \R^n$ is called a domain if $\rD$ is an open connected set.
\begin{definition}\label{defconcconst}  
Assume that $f: \rD\to\R$ is a convex function 
 in a convex domain $\rD\subset \R^n$,  and $f\in\rC^3(\rD)$.   The function $f$ is called   $a (>0)$-self-concordant, or simply self-concordant, if the following inequality hold
\begin{equation}\label{defconcconst1} 
|\langle \partial^3f(\x),\otimes^3\bu\rangle|\le 2a^{-1/2} (\bu^\top \partial^2f(\bx)\bu)^{3/2}, 
\textrm{ for all }\x\in \rD,\bu\in\R^n.
\end{equation}
The function $f$ is called a standard self-concordant if $a=1$,  and a strongly $a$-self-concordant if $f(\x_m)\to\infty$ if the sequence $\{\x_m\}$ converges to the boundary of $\rD$.

The complexity value $\theta(f)\in[0,\infty]$ of an a-self-concordant function $f$ in $\rD$, called a self-concordant parameter in \cite[Definition 2.3.1]{NN94}, is
\begin{equation}\label{defsconc0}
\begin{aligned}
\theta(f)=sup_{\x\in\rD}
\inf\{\lambda^2\in[0,\infty], |\nabla f(\x)^\top \bu|^2\\
\le \lambda^2 a\big(\bu^\top\partial^2 f(\x)\bu\big),\forall \bu\in\R^n\}.
\end{aligned}
\end{equation}

 A strongly self-concordant function with a finite $\theta(f)$ is called a barrier (function).
\end{definition}

The following equality is well known  \cite[top of page 16]{NN94}.
\begin{proposition}\label{charthetf}  Let $\rD\subset\R^n$ be a convex domain, and assume that $f$ is an $a$-self-concordant function in $\rD$.    Suppose furthermore that the Hessian $\partial^2(f)$ is positive definite for each $\x\in\rD$.   Then
\begin{equation}\label{defconcconst1}
 \theta(f)=a^{-1}\sup_{\x\in \rD}\nabla f(\x)^\top (\partial^2f)^{-1}(\x)\nabla f(\x).
\end{equation}
\end{proposition}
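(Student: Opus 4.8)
The plan is to recognize that the asserted formula is, at each point of $\rD$, the extremal characterization underlying the Cauchy--Schwarz inequality, and that the global statement follows by taking a supremum. First I would fix $\x\in\rD$ and abbreviate $H=\partial^2 f(\x)$ and $g=\nabla f(\x)$; by hypothesis $H$ is positive definite, so $\langle\bu,\bv\rangle_H:=\bu^\top H\bv$ is an inner product on $\R^n$ with norm $\|\bu\|_H=(\bu^\top H\bu)^{1/2}$. In this notation the inner infimum appearing in the definition \eqref{defsconc0} of $\theta(f)$ is
\[
\mu(\x):=\inf\bigl\{\lambda^2\in[0,\infty]:\ |g^\top\bu|^2\le \lambda^2 a\,(\bu^\top H\bu)\ \text{for all }\bu\in\R^n\bigr\},
\]
and it suffices to prove the pointwise identity $\mu(\x)=a^{-1}g^\top H^{-1}g$ for every $\x\in\rD$; the proposition then results from taking $\sup_{\x\in\rD}$ on both sides.

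For the pointwise identity I would dispose of the case $g=0$ first, where the constraint holds for all $\lambda$ and both sides vanish. For $g\ne 0$, I would rewrite $g^\top\bu=(H^{-1}g)^\top H\bu=\langle H^{-1}g,\bu\rangle_H$ and invoke Cauchy--Schwarz for $\langle\cdot,\cdot\rangle_H$:
\[
|g^\top\bu|^2\le \|H^{-1}g\|_H^2\,\|\bu\|_H^2=(g^\top H^{-1}g)\,(\bu^\top H\bu),\qquad \bu\in\R^n,
\]
so $\lambda^2=a^{-1}g^\top H^{-1}g$ is admissible and $\mu(\x)\le a^{-1}g^\top H^{-1}g$. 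For the opposite bound I would test the defining constraint at the single vector $\bu=H^{-1}g$, obtaining $(g^\top H^{-1}g)^2\le \lambda^2 a\,(g^\top H^{-1}g)$; since $H^{-1}$ is positive definite and $g\ne 0$ we have $g^\top H^{-1}g>0$, and dividing gives $\lambda^2\ge a^{-1}g^\top H^{-1}g$ for every admissible $\lambda$, hence $\mu(\x)\ge a^{-1}g^\top H^{-1}g$. Together these give $\mu(\x)=a^{-1}g^\top H^{-1}g$.

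Finally I would take the supremum over $\x\in\rD$, which turns the pointwise identity into
\[
\theta(f)=\sup_{\x\in\rD}\mu(\x)=a^{-1}\sup_{\x\in\rD}\nabla f(\x)^\top(\partial^2 f)^{-1}(\x)\nabla f(\x),
\]
the assertion of Proposition \ref{charthetf}. I do not anticipate a genuine obstacle: the whole argument is the equality case of Cauchy--Schwarz in the $H$-inner product. The only points worth a sentence are the degenerate case $g=0$ and the observation that, although $g^\top H^{-1}g$ is finite at each individual $\x$ because $H\succ 0$, the supremum over $\rD$ may be $+\infty$ (e.g.\ when $f$ is not a barrier), in which case the stated equality is read in $[0,\infty]$.
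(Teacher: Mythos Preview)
Your argument is correct: this is exactly the standard Cauchy--Schwarz computation in the local inner product induced by the positive definite Hessian, with the equality case furnished by $\bu=H^{-1}g$, and then a supremum over $\x$. The paper itself does not supply a proof of this proposition; it merely records the formula as well known with a pointer to \cite[top of page 16]{NN94}, so there is nothing to compare against beyond noting that your derivation is the expected one.
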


The above result yields that if  $f_i$ is $a$-self-concordant barrier in $\rD_i\subset \R^n$ for $i\in[N]$, such that $\rD=\cap_{i=1}^N \rD_i\ne\emptyset$ then $f=\sum_{i=1}^N f_i$ is an $a$-self-concordant barrier in $\rD$, with $\theta(f)\le \sum_{i=1}^N \theta(f_i)$.

Denote by $\rS_n\supset\rS_{n,+}\supset \rS_{n,++}$ the space of $n\times n$ symmetric, the cone of positive semidefinite and the open set of positive definite matrices respectively.   

A first example of a strongly standard self-concordant function for the interior of the cone $\R_+^n$,  denoted as $\R_{++}^n$, where $\R_{++}=(0,\infty)$, is 
\begin{equation}\label{logxbar}
\begin{aligned}
&\sigma(\x)=-\sum_{i=1}^n \log x_i,\\
&\theta(\sigma)=n.
\end{aligned}
\end{equation}
A second example of a strongly standard self-concordant function for $\rS_{,++}$ is 
$-\log\det X$, where
\begin{equation}\label{-logdetX}
\begin{aligned}
\theta(-\log\det X)=n, \quad X\in\rS_{n,++}.
\end{aligned}
\end{equation}

 Assume that $\rD$ is a bounded convex domain,  $\x\in\rD$  and $\rL$ is a line  through $\x$. 
 Denote by $d_{\max}(\x,\rL)\ge d_{min}(\x,\rL)$ the two distances from $\x$ to the end points of $\rL\cap\partial\rD$.   Then $\textrm{sym}(\x,\rD)$ is the infimum of $\frac{d_{\min}(\x,L)}{ d_{\max}(\x,L)}$ for all lines $\rL$ through $\x$.   Observe that if $\rB(\x,r)\subset \textrm{Closure}(\rD)\subset \rB(\x,R)$ then $\textrm{sym}(\x,\rD)\ge r/R$.

Recall that Renegar \cite{Ren01} deals only with strongly standard self-concordant functions.  
The complexity value $\theta(f)$, coined in \cite{Ren01}, is called the parameter of barrier $f$ in \cite{NN94}, and is considered only for self-concordant barrier  in \cite[\textsection 2.3.1] {NN94}.   

We now recall the  complexity result to approximate the infimum of a linear functional on a bounded convex domain with whose boundary is given by a barrier function $\beta$.
We normalize $\beta$ by assuming that it is strongly self-concordant.
A simple implementation of the Newton's method  is  a "short-step" ipm's that  follows the central path \cite[\textsection 2.4.2]{Ren01}.
The number of iterations to approximate the minimum of a linear functional  within $\delta$ precision starting with an intial point $\x'$ is \cite[Theorem 2.4.1]{Ren01}:
\begin{equation}\label{Renthm}
O\big(\sqrt{\theta(\beta)}\log\big(\frac{\theta(\beta)}{\delta \textrm{sym}(\x',\rD)}\big)\big).
\end{equation}
\section{Numerical complexity of algorithms in terms of flops}\label{sec:flop}
In this paper we estimate the complexity of algorithms  in terms of 
of floating point operations or flops.   We use the standard estimations for the number of flops for vector and matrix operations as in \cite{GV96}.  In particular, the determinant and inverse of $n\times n$ matrix, and the  solution of $n$- equations with an invertible matrix needs $O(n^3)$ flops.

\end{document}